\definecolor{shozyohi}{rgb}{0.7686, 0, 0}
\theoremstyle{plain}
\newtheorem{thm}{Theorem}[section]
\newtheorem*{thm*}{Theorem}
  \newtheorem{prop}[thm]{Proposition}
  \newtheorem{lem}[thm]{Lemma}
  \newtheorem{cor}[thm]{Corollary}
\theoremstyle{definition}
  \newtheorem{dfn}[thm]{Definition}
  \newtheorem{exmp}[thm]{Example}
  \newtheorem{axm}[thm]{Axiom}
  \newtheorem*{conv}{Convention}
  \newtheorem{rem}[thm]{Remark}
\numberwithin{equation}{section}
\renewcommand\labelenumi{\textrm{(\arabic{enumi})}}
\newcommand*\gl{\lambda}
\newcommand*\gm{\mu}
\newcommand*\gn{\nu}
\newcommand*\gD\Delta
\newcommand*\gL\Lambda
\newcommand*\bm[1]{{\boldsymbol{#1}}}
\newcommand*\ba{\bm{a}}
\newcommand*\bb{\bm{b}}
\newcommand*\zvec{\mathbf{0}}
\newcommand*\kk{\Bbbk}
\newcommand*\NN{\mathbb{N}}
\newcommand*\ZZ{\mathbb{Z}}
\newcommand*\RR{\mathbb{R}}
\newcommand*\cA{\mathcal{A}}
\newcommand*\cB{\mathcal{B}}
\newcommand*\cC{\mathcal{C}}
\newcommand*\cF{\mathcal{F}}
\newcommand*\cH{\mathcal{H}}
\newcommand*\cI{\mathcal{I}}
\newcommand*\cL{\mathcal{L}}
\newcommand*\cM{\mathcal{M}}
\newcommand*\cV{\mathcal{V}}
\newcommand*\fp{\mathfrak{p}}
\newcommand*\sfm{\mathsf{m}}
\newcommand*\sn{\mathsf{n}}
\newcommand*\wS{\widetilde{S}}
\let\opn\operatorname 
\newcommand*\Hom{\opn{Hom}}
\newcommand*\Ext{\opn{Ext}}
\newcommand*\Ass{\opn{Ass}}
\newcommand*\Min{\opn{Min}}
\newcommand*\lcm{\opn{lcm}}
\newcommand*\rank{\opn{rank}}
\newcommand*\chara{\opn{char}}
\newcommand*\supp{\opn{supp}}
\newcommand*\gr{{\opn{gr}}}
\newcommand*\void\varnothing
\newcommand*\signs{\cbr{0,-,+}}
\newcommand*\bpol{\opn{b-pol}}
\newcommand*\Iff{\Longleftrightarrow}
\newcommand*\imply{\Rightarrow}
\newcommand*\longto{\longrightarrow}
\newcommand\pr[1]{\left( #1\right)}
\newcommand\cbr[1]{\left\{ #1\right\}}
\newcommand\br[1]{\left[ #1\right]}
\newcommand\ang[1]{\left\langle #1\right\rangle}
\newcommand\set[2]{\left\{ #1 \mid #2 \right\}}
\newcommand*\ul\underline
\title[The CM-ness of the bounded complex of an affine oriented matroid]%
{The Cohen--Macaulayness of the bounded complex of an affine oriented matroid}\thanks{This work was supported by JSPS KAKENHI Grant Number 24740013, 15K17514, 25400057, 16K05114.}
\author{Ryota Okazaki}
\address{Faculty of Eduation, University of Teacher Education Fukuoka, Munakata, Fukuoka 811-4192, Japan}
\email{rokazaki@fukuoka-edu.ac.jp}
\author{Kohji Yanagawa}
\address{Department of Mathematics, Kansai University, Suita, Osaka 564-8680, Japan}
\email{yanagawa@ipcku.kansai-u.ac.jp}
\begin{document}
\begin{abstract}
An affine oriented matroid $\cM$ is a combinatorial abstraction of an affine hyperplane arrangement. From $\cM$, 
Novik, Postnikov and Sturmfels \cite{NPS} constructed a squarefree monomial ideal $O_\cM$ in a polynomial ring $\wS$, 
and got beautiful results. Developing their theory, we will show the following.
\begin{enumerate}
\item If $\wS/O_\cM$ is Cohen--Macaulay, then the bounded complex $\cB_\cM$ 
(a regular CW complex associated with  $\cM$) is a contractible homology manifold with boundary. 
 This is closely related to Dong's theorem (\cite{D08}), which used to be Zaslavsky's conjecture. 
\item We give a characterization of  $\cM$ such that $\wS/O_\cM$ is Cohen--Macaulay, which states that the converse of \cite[Corollary 2.6]{NPS} is essentially true. 
\end{enumerate}
\end{abstract}
%
%
\maketitle
%
\section{Introduction}
An {\it oriented matroid} is a pair of a finite set $E$ and a set $\cL \subset \signs^E$ of sign vectors satisfying some axioms. It is considered as a common abstraction of many sorts of mathematical objects and deep theory has been constructed (see \cite{BVSWZ}). 
A typical example is the one given by a linear hyperplane arrangement in a Euclidean space. In general, by Topological Representation Theorem (abbrev. TRT), any oriented matroid (without loops) can be realized as an arrangement of ``pseudo-equators'' indexed by the elements of $E$ in a $d$-sphere. For example, an oriented matroid coming from a linear hyperplane arrangement in $\RR^{d+1}$ is represented as an arrangement on the unit sphere in $\RR^{d+1}$.

An affine oriented matroid is just a triad $(E,\cL,g)$ such that $(E,\cL)$ is an oriented matroid and $g \in E$.
Philosophically, $(E,\cL,g)$ corresponds to an arrangement on the open hemisphere with respect to  the ``equator'' $g$,
and an affine hyperplane arrangement $\RR^d$ is a typical example.
An affine hyperplane arrangement $\cA$ in $\RR^d$ decomposes $\RR^d$ into a finite number of cells, and the bounded cells form a regular CW complex, called the {\it bounded complex} of $\cA$.
Similarly, any affine oriented matroid $\cM$ admits the bounded complex $X(\cB_\cM)$ that is also finite regular CW by TRT.

Intuitively, bounded complexes seem to behave well; indeed they are always contractible as shown by Bj\"orner and Ziegler \cite[Theorem 4.5.7]{BVSWZ}. Recently, Dong \cite{D08} has shown that $X(\cB_\cM)$ is homeomorphic to a ball if $\cM$ is \emph{uniform} (see Remark~\ref{rem:genpos-uniform} for the definition).
When $\cM$ comes from an affine hyperplane arrangement, Dong's theorem was first conjectured by Zaslavsky \cite{Z}.
At the same time, there are lots of examples of $\cM$ that is not uniform but whose bounded complex is a ball (see Example~\ref{exmp:cm-arr} and Remark~\ref{rem:CM-imply-CM} for example).
These observations lead us to expect that $X(\cB_\cM)$ still satisfies nice properties for much wider classes.

Our main results are concerned with an affine oriented matroid $\cM := (E,\cL,g)$ such that $g$ being \emph{in general position}.
Note that the condition of $g$ being in general position is quite different from the one that an affine hyperplane arrangement is in general position in the sense of \cite{St07}.
The condition of $g$ being in general position is much weaker than that of $\cM$ being uniform.  
For example, the affine oriented matroid given by the affine hyperplane arrangement $\cA$ in Figure~\ref{fig:pp-cm-arr} is not uniform since $\cA$ has 3 lines intersecting with a point.
On the other hand, $g$ is in general position because any two lines are not parallel. See Remark~\ref{rem:genpos-uniform} and Example~\ref{exmp:aff-hyp-arr} for details.

The following is one of our main results.

\begin{thm*}[cf.\ {Corollary~\ref{cor:X-hmfd}}]
  If $g$ is in general position, then $X(\cB_\cM)$ is a contractible homology manifold with boundary over $\ZZ$.
\end{thm*}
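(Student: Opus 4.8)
The plan is to deduce the statement from two inputs. The first is part~(1) of our main results, which we may assume: if $\wS/O_\cM$ is Cohen--Macaulay, then $X(\cB_\cM)$ is a contractible homology manifold with boundary over $\ZZ$. The second is the implication that $g$ in general position forces $\wS/O_\cM$ to be Cohen--Macaulay. Granting the latter, the theorem is immediate by composing the two implications, and the ``contractible'' half of the conclusion is in any case classical (\cite[Theorem~4.5.7]{BVSWZ}). So essentially all of the work lies in proving that general position of $g$ yields $\pd\pr{\wS/O_\cM}=\codim\pr{\wS/O_\cM}$.

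For that I would work with the simplicial complex $\Delta$ whose Stanley--Reisner ideal is $O_\cM$ and apply Reisner's criterion over $\ZZ$: it suffices to show that $\tilde H_i(\opn{lk}_\Delta\sigma;\ZZ)$ vanishes for every face $\sigma\in\Delta$ (including $\sigma=\void$) and every $i<\dim\opn{lk}_\Delta\sigma$, and is torsion-free when $i=\dim\opn{lk}_\Delta\sigma$. The key point is that the faces of $\Delta$, and hence the links of its faces, are governed by the covectors of $\cM$, and a link $\opn{lk}_\Delta\sigma$ can be identified --- up to homotopy --- with a bounded complex of a minor $\cM'$ of $\cM$ obtained by deleting and/or contracting the ground-set elements (all different from $g$) recorded by $\sigma$. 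Since general position of $g$ is inherited by each such minor, one runs an induction on $\abs{E}$: the inductive hypothesis supplies the required vanishing for the proper links $\sigma\ne\void$, while the case $\sigma=\void$ reduces to the known topology of the bounded complex itself. The hypothesis enters precisely here: it excludes the degenerate links --- those that arise when $g$ passes through an intersection of pseudohyperplanes --- which would otherwise carry reduced homology below the top dimension, and this is what lets the induction close. In particular the conclusion genuinely goes beyond the uniform case treated in \cite{NPS}.

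The step I expect to be the main obstacle is exactly this identification of the links of $\Delta$ with bounded complexes of minors, together with the verification that ``$g$ in general position'' is the right hypothesis for the induction: it must be strong enough to kill the bad links, yet it is strictly weaker than uniformity, so one cannot merely quote the uniform case. Keeping straight which ground-set elements get deleted and which get contracted, and checking that general position descends to the minors that actually occur, is where the care is needed. Once Cohen--Macaulayness is in hand, part~(1) applies verbatim; the nontrivial content hidden inside part~(1) --- upgrading ``Cohen--Macaulay'' to ``homology manifold with boundary'', i.e.\ showing that every link in $\cB_\cM$ is a $\ZZ$-homology sphere or a $\ZZ$-homology ball via the Topological Representation Theorem applied to minors, plus the analysis of which faces lie on the boundary --- may be taken as given.
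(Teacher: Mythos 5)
Your decomposition into three ingredients --- contractibility (Bj\"orner--Ziegler), general position $\Rightarrow$ Cohen--Macaulay, and Cohen--Macaulay $\Rightarrow$ homology manifold --- is the same as the paper's, but you have inverted where the work lies. The middle ingredient is precisely \cite[Corollary~2.6]{NPS}, quoted in the paper as Theorem~\ref{thm:gen-pos} and \emph{not} re-proved there; the last ingredient is Corollary~\ref{cor:X-hmfd}, which is the paper's new contribution and is the actual content behind the label ``cf.\ Corollary~\ref{cor:X-hmfd}.'' By declaring that ``the nontrivial content hidden inside part~(1)\ldots may be taken as given,'' you assume exactly what needs to be proved. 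The paper earns that implication across Sections~3, 5 and 6: the Novik--Postnikov--Sturmfels resolution is shown to be a \emph{faithful} cellular resolution, so Cohen--Macaulayness of $\wS/O_\cM$ descends to Cohen--Macaulayness of $X(\cB_\cM)$ (Theorem~\ref{CM properties}); the canonical module $\omega_{\wS/O_\cM}$ is computed as the tope ideal $J_\cM$ (Theorem~\ref{canonical}); Lemma~\ref{interior} then reads off $H^i(C^\bullet(X^{\ge\sigma}))$ from that canonical module, and Lemma~\ref{lem:redhom-lem} promotes the result from $\kk$ to $\ZZ$. None of this machinery appears in your proposal.

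The sketch you offer for the middle ingredient also has a concrete flaw. You want to apply Reisner's criterion to the Stanley--Reisner complex $\Delta$ of $O_\cM$ and to identify a link $\opn{lk}_\Delta\sigma$, up to homotopy, with the bounded complex of a minor of $\cM$. But $\Delta_\cM$ lives on the $2n$ vertices $\{x_1,\dots,x_n,y_1,\dots,y_n\}$ and has dimension $2n-r-1$ where $r=\rank\cM$, whereas bounded complexes of $\cM$ and its minors have dimension at most $r-1$. These two scales are unrelated in general, so the proposed identification cannot hold as stated, and ``the case $\sigma=\void$ reduces to the known topology of the bounded complex'' is simply false: $\Delta_\cM$ and $X(\cB_\cM)$ are different spaces of different dimensions. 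The only bridge the paper builds between them runs through the cellular resolution and Hochster's formula (see the proofs of Lemma~\ref{generator} and Theorem~\ref{Delta_M ball}), not through a link-by-link comparison. Since your plan needs the middle ingredient anyway, the clean move is to cite it as the paper does and to direct the effort at Corollary~\ref{cor:X-hmfd}.
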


From \cite{MR} and basic facts in PL topology, it is easy to verify that $X(\cB_\cM)$ is homeomorphic to a ball if $g$ is in general position, in the following cases: $\dim X(\cB_\cM) =2$, or $\cM$ comes from an affine hyperplane arrangement in $\RR^3$.

  In the sequel, we set $E \setminus \cbr g = \cbr{1,\dots,n}$.
  In the proof of the above theorem, a key role is played by an ideal and its minimal free resolution constructed by Novik, Postnikov, and Sturmfels in \cite{NPS}.
  In the paper, they associated, with an affine oriented matroid $\cM = (E,\cL,g)$, a squarefree monomial ideal $O_\cM$ of $\wS:=\kk[x_1, \ldots, x_n, y_1, \ldots, y_n]$ over a field $\kk$,
  and proved that $\wS/O_\cM$ has a minimal free resolution supported by $X(\cB_\cM)$.
  For example, let $\cM$ be the affine oriented matroid associated with the line arrangement in Figure~\ref{4lines}.
  Then $\cM$ is uniform, and $O_\cM =(x_1x_2, x_1y_3, x_1x_4, x_2x_3, x_2y_4, x_3x_4)$.
  The bounded complex $X(\cB_\cM)$ is the shaded part of  Figure~\ref{4lines},
  and supports the minimal free resolution $0 \to \wS^3 \to \wS^8 \to \wS^6\to \wS \to 0$ of $\wS/O_\cM$.
  
\begin{figure}[htbp]
\centering
\includegraphics{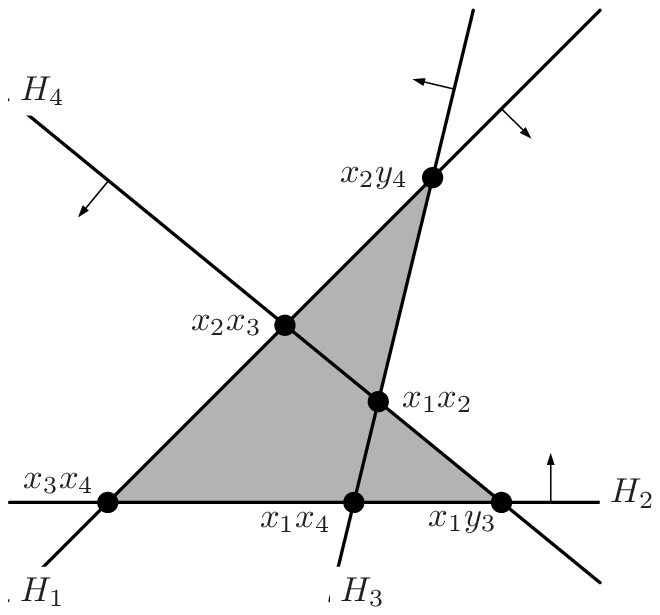}
\caption{The bounded complex $X(\cB_\cM)$} 
\label{4lines}
\centering
\end{figure}

Throughout this paper, we will use some results and techniques in commutative algebra. See \cite{BH,St} for undefined terminology. Novik et al.\ also showed that if $g$ is in general position, then $\wS/O_\cM$ is Cohen--Macaulay. Unfortunately the converse is not true in general, but in Theorem~\ref{thm:char-genpos}, we will show that, under the mild condition that $\cB_\cM$ is of full rank, the following three conditions are equivalent: (1) $g$ is in general position, (2) $\wS/O_\cM$ is Cohen--Macaulay, and (3) by setting $y_i$ to $x_i$, $\wS/O_\cM$ degenerates to a Stanley--Reisner ring of an ordinary matroid. 
It is noteworthy that we do not need any assumption for the equivalence of (2) and (3), and hence the Cohen--Macaulayness of $\wS/O_\cM$ does not depend on the characteristic of the base field $\kk$.

To prove the main theorem above, we will introduce a new notion, which we call a \emph{faithful} cellular resolution.
Additionally, we will give a concrete description of the canonical module of $\wS/O_\cM$ as an ideal of $\wS/O_\cM$, when $\wS/O_\cM$ is Cohen--Macaulay.

Since $O_\cM$ is squarefree, there is the unique simplicial complex $\gD_\cM$ whose Stanley--Reisner ring is $\wS/O_\cM$. We will also study $\gD_\cM$ and prove the following.

\begin{thm*}[cf.\ {Theorem~\ref{Delta_M ball}}]
   If $\wS/O_\cM$ is Cohen--Macaulay, then the geometric realization of $\gD_\cM$ is a homology manifold with boundary. Moreover, the boundary is a homology sphere in the sense of \eqref{homology sphere}. 
\end{thm*}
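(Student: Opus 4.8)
The plan is to reduce the assertion to a local condition on the links of $\gD_\cM$, to verify that condition from the explicit form of the canonical module of $\wS/O_\cM$, and to read off the statement about the boundary from Cohen--Macaulayness together with Poincar\'e--Lefschetz duality. Write $d:=\dim(\wS/O_\cM)$, so that $\gD:=\gD_\cM$ is a Cohen--Macaulay complex over $\kk$ of dimension $d-1$. Since $\kk[\gD]=\wS/O_\cM$ is Cohen--Macaulay, so is the Stanley--Reisner ring of every link $\mathrm{lk}_\gD(\sigma)$, and Reisner's criterion already gives $\widetilde H_i(\mathrm{lk}_\gD(\sigma);\kk)=0$ for all $i<\dim\mathrm{lk}_\gD(\sigma)$. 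I would then use the standard fact that $|\gD|$ is a homology manifold with boundary over $\kk$ as soon as, moreover, $\dim_\kk\widetilde H_{d-1-|\sigma|}(\mathrm{lk}_\gD(\sigma);\kk)\le 1$ for every nonempty face $\sigma\in\gD$: this bound, fed recursively into the links of links and combined with the fact that a Cohen--Macaulay complex is connected in codimension one, forces every link of a nonempty face to be either $\kk$-acyclic or a $\kk$-homology sphere of the expected dimension, which is exactly the manifold-with-boundary condition, and $\partial\gD$ is then the subcomplex generated by the faces whose link is $\kk$-acyclic.

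To obtain this bound I would use the concrete description of the canonical module $\omega:=\omega_{\wS/O_\cM}$, realized as an ideal $J$ of $\wS/O_\cM$ (legitimate since $\wS/O_\cM$ is a reduced Cohen--Macaulay ring, hence generically Gorenstein). By Gr\"abe's description of the canonical module of a Stanley--Reisner ring, the graded component of $\omega_{\kk[\gD]}$ ``supported at'' a face $\sigma$ is, up to a shift, $\widetilde H_{\dim\mathrm{lk}_\gD\sigma}(\mathrm{lk}_\gD\sigma;\kk)$; hence the displayed bound holds for all $\sigma$ precisely when $J$ is a radical (equivalently, squarefree monomial) ideal, and in that case $\wS/O_\cM\big/J\cong\kk[\partial\gD]$. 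So it suffices to observe, from the combinatorics of $O_\cM$ --- equivalently, of the affine oriented matroid $\cM$ --- that $\omega_{\wS/O_\cM}$ is represented by a squarefree ideal; granting this, $|\gD_\cM|$ is a homology manifold with boundary over $\kk$, with boundary the subcomplex $\partial\gD_\cM$ cut out by $J$.

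It remains to identify $\partial\gD_\cM$ up to $\kk$-homology. If $J=\wS/O_\cM$, then $\wS/O_\cM$ is Gorenstein, $\gD_\cM$ is a $\kk$-homology $(d-1)$-sphere, and $\partial\gD_\cM=\void$, which we regard as $S^{-1}$ and which is a homology sphere in the sense of \eqref{homology sphere}. Otherwise $J$ is a proper ideal and $\partial\gD_\cM\ne\void$; Cohen--Macaulayness gives $\widetilde H_i(\gD_\cM;\kk)=0$ for $i<d-1$, while a connected homology manifold with nonempty boundary carries no top-dimensional homology, so $\widetilde H_*(\gD_\cM;\kk)=0$ and $\gD_\cM$ is a $\kk$-homology ball. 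Poincar\'e--Lefschetz duality for homology manifolds with boundary then yields $\widetilde H_*(\partial\gD_\cM;\kk)\cong\widetilde H_*(S^{d-2};\kk)$, and since $\partial\gD_\cM$, being the boundary of a homology manifold with boundary, is a closed $\kk$-homology manifold, it is a $\kk$-homology $(d-2)$-sphere in the sense of \eqref{homology sphere}.

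Given the tools available at this point --- above all the concrete description of $\omega_{\wS/O_\cM}$ and the correspondence between a squarefree canonical module and the homology-manifold-with-boundary property --- the argument above is rather formal; I expect the genuine difficulty to lie in the canonical-module description itself, i.e.\ ultimately in computing the reduced homology of all links $\mathrm{lk}_{\gD_\cM}(\sigma)$ in terms of covector data of $\cM$ (for instance by identifying such links, up to joins with simplices, with the complexes $\gD_{\cM'}$ attached to deletions and contractions $\cM'$ of $\cM$, and inducting on the rank of $\cM$). I would also stress that no genericity or full-rank assumption on $\cM$ is used here: everything is extracted from $\wS/O_\cM$ being Cohen--Macaulay, which is why the conclusion is insensitive to $\chara\kk$.
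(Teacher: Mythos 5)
Your proof proposal is correct in outline and matches the paper's strategy for the first half: both reduce the homology-manifold property of $\gD_\cM$ to the bound $\dim_\kk H^{d-\#F}(\mathrm{lk}_{\gD_\cM}F;\kk)\le 1$ for all nonempty $F$, and both read this bound off the squarefree description of $\omega_{\wS/O_\cM}$ as the ideal $J_\cM$ (Theorem~\ref{canonical}); the paper just does this more directly via Hochster's formula rather than Gr\"abe's, and note that your extra remarks about feeding the bound recursively into links-of-links and using connectedness in codimension one are unnecessary --- Reisner already kills all link cohomology below the top, and for a pure $(d-1)$-complex the link of a $k$-face has dimension exactly $d-k-2$, so the single top-degree bound already gives the local homology condition at every point. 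Where you genuinely diverge is the boundary statement. You argue topologically: $\partial\gD_\cM$ is a closed homology manifold because it is the boundary of one (this is Mitchell's theorem, which the paper proves/cites only over $\ZZ$ and invokes only for $X(\cB_\cM)$, not $\gD_\cM$), and its global homology is a sphere by Poincar\'e--Lefschetz duality applied to the $\kk$-acyclic $\gD_\cM$. The paper instead argues algebraically: since $J_\cM$ is the canonical ideal of the Cohen--Macaulay ring $\kk[\gD_\cM]$ and the vertices of $\partial\gD_\cM$ are exactly the faces $F$ with $\sfm_F\notin J_\cM$, one has $\kk[\partial\gD_\cM]\cong\kk[\gD_\cM]/J_\cM$, which is Gorenstein with trivial canonical shift by \cite[Prop.~3.3.18]{BH}, i.e.\ Gorenstein*; Stanley's characterization of Gorenstein* complexes then gives both the local and global conditions of \eqref{homology sphere} over $\kk$ at once, and Lemma~\ref{lem:redhom-lem} lifts to $\ZZ$. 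The paper's route is preferable here because it is self-contained and avoids invoking Mitchell over a field coefficient ring (a step you assert but do not justify), and because it produces \eqref{homology sphere} in one shot rather than in two separate pieces. Both approaches are sound, provided you supply the missing coefficient-field version of Mitchell or replace it with the Gorenstein argument.
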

\section{Preliminary}\label{sec:prelim}
In this paper, we mainly treat an affine oriented matroid. 
To introduce it, let us first recall definitions and some basic notions on oriented matroids that we will need in this paper. 
See \cite{BVSWZ} for details and unexplained terminology. 
%
%

Let $E$ be a finite set.
A {\em sign vector} on $E$ is a function $\gl:E \to \signs$. 
Let us remark that a sign vector is usually defined as a vector over $\signs$ indexed by $E$ (with some fixed linear ordering), but we adopt the definition above just for our convenience. 
The set of all the sign vectors on $E$ is denoted by $\signs^E$. 
For a sign vector $\gl \in \signs^E$, we set $\supp(\gl) := \gl^{-1}(\cbr{-,+})$ and call it the \emph{support} of $\gl$. 
The symbol $\zvec$ denotes the sign vector with the empty support.
For convenience, we consider the natural multiplication on $\signs$ as follows:
\[
++ = -- =+,\ +-=-+ =-,\ +0 = 0+ = -0 = 0-= 0.
\]
The {\em opposite} $-\gl$ of $\gl$ is the sign vector given by $(-\gl)(e) = -\gl(e)$ for all $e \in E$. 
Defining $0 <+$, $0 <-$ with $+$ and $-$ incomparable, we have a partial order $\le$ on $\signs$. 
By abuse of notation, let $\le$ denote the partial order on $\signs^E$ defined as follows: 
\[
\gl \le \gm \quad \Iff \quad \gl(e) \le \gm(e) \quad \text{for all $e \in E$}.
\]
With this order, the sign vector $\zvec$ is the least in $\signs^E$.
For two sign vectors $\gl,\gm$, their {\em composition} $\gl \circ \gm$ is the sign vector defined as follows:
\[
(\gl \circ \gm)(e) := \begin{cases}
\gm(e) & \text{for }e \not\in\supp(\gl), \\
\gl(e) & \text{for }e \in \supp(\gl).
\end{cases}
\]
It is clear that $\gl \circ \gm \ge \gl$ for all $\gl, \gm \in \cL$.
For $e \in E$ and $\gl, \gm \in \signs^E$, it follows from the definition that $(\gl \circ \gm)(e) = (\gm \circ \gl)(e)$ if and only if $e$ does not belong to the set
\[
S(\gl,\gm) := \set{f \in E}{\gl(f) = - \gm(f) \neq 0},
\]
called the {\em separation set} of $\gl$ and $\gm$. 

Now we can state the definition of an oriented matroid in terms of covectors. 
A pair $\cM = (E,\cL)$ of a finite set $E$ and a subset $\cL \subseteq \signs^E$ is said to be an \emph{oriented matroid} (on $E$) if the following axioms, called \emph{Covector Axioms}, are satisfied.
\begingroup%
\begin{axm}[Covector Axioms]\ 
\renewcommand\labelenumi{(L\arabic{enumi})}%
\begin{enumerate}%
\setcounter{enumi}{-1}
\item $\zvec \in \cL$,
\item $\gl \in \cL$ implies $-\gl \in \cL$,
\item $\gl \circ \gm \in \cL$ for any $\gl,\gm \in \cL$, and
\item for $\gl,\gm \in \cL$ and $e \in S(\gl,\gm)$, there exists $\gn \in \cL$ such that $\gn(e) = 0$ and $\gn(f) = \gl \circ \gm(f) = \gm \circ \gl(f)$ for all $f \in E \setminus S(\gl,\gm)$.
\end{enumerate}%
\end{axm}
\endgroup

\begin{exmp}[linear hyperplane arrangement] \label{exmp:lin-hyp-arr}
Let $E$ be a finite set and $\set{L_e}{e \in E}$ a linear hyperplane arrangement in $\RR^d$. For $e \in E$, let $v_e \in \RR^d$ be the vector defining $L_e$. 
Each vector $v$ in $\RR^d$ defines the sign vector $\gl_v \in \signs^E$ as follows:
\begin{align} \label{eq:vec-fct}
\gl_v(e) := \begin{cases}
+ &\text{if }\ang{v_e,v} > 0, \\
0 &\text{if }\ang{v_e,v} = 0, \\
- &\text{if }\ang{v_e,v} < 0,
\end{cases}
\end{align}
where $\ang{-,-}$ denotes the inner product in $\RR^d$.
The pair $(E,\set{\gl_v}{v \in \RR^d})$ is then an oriented matroid.
\end{exmp}

A \emph{loop} of $\cM$ is an element $e \in E$ such that $\gl(e) = 0$ for any covector $\gl$, and a \emph{coloop} of $\cM$ is such that $\supp(\gl) = \cbr{e}$ for some covector $\gl$.
For a subset $\cV \subseteq \signs^E$, let $\Min\cV$ denote the set of the support-inclusion minimal elements in $\cV$. For an oriented matroid $\cM := (E, \cL)$, the elements of the set $\cC:= \Min(\cL\setminus\cbr\zvec)$ are called \emph{cocircuits} of $\cM$. It is an easy exercise to show that
the cocircuit set $\cC$ coincides with the set $\Min_\le (\cL \setminus \cbr \zvec)$ of the elements of $\cL \setminus \cbr \zvec$ that are minimal with respect to $\le$.
The set of cocircuits is characterized by the axiom called (\emph{Co})\emph{circuit Axioms}. See \cite{BVSWZ} for details.

Note that the sets
\[
\underline\cC := \set{\supp(\gl) \subseteq \br n}{\gl \in \cC}
\]
and $E$ form the ordinary matroid $(E,\underline\cC)$ (in terms of Circuit Axiom). The dual matroid of $(E, \underline \cC)$ is called the \emph{underlying matroid} and denoted by $\underline\cM$. The \emph{rank} of $\cM$ is defined to be that of $\underline\cM$. It follows from \cite[Theorem 4.1.14]{BVSWZ} that the rank of $\cM$ is equal to that of $\cL$ as a poset.

\begin{rem}
Following \cite{D,NPS} and \cite[Section 4]{BVSWZ}, we will use terminologies on oriented matroids in the dual form (for oriented matroids, their dual can be defined. See \cite{BVSWZ} for details). The reader should note some differences in notation and concepts when he/she refers to \cite[Section 3]{BVSWZ}.

In the view of the above, the cocircuit set of $\cM$ should be denoted by $\cC^*$; nevertheless we prefer to use $\cC$.
\end{rem}

One of the most remarkable results in oriented matroid theory is the following shellability and sphericity theorem. 

\begin{thm}[cf.\ {\cite[Theorem 4.3.3 and Proposition 4.7.24]{BVSWZ}}] \label{thm:TRT} Let $(E,\cL)$ be an oriented matroid of rank $r$. The poset $\cL$ is isomorphic to the face poset of a shellable regular CW complex $X(\cL)$ whose underlying space is a $(r-1)$-sphere.
\end{thm}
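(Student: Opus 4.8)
This is the sphere part of the Folkman--Lawrence \emph{Topological Representation Theorem} together with the fact that the resulting CW decomposition is shellable; since it is one of the cornerstones of the theory, we refer to \cite{BVSWZ} for a complete proof and only sketch the structure of the argument. \emph{Reduction to the simple case.} One first observes that $X(\cL)$ may be constructed under the extra hypothesis that $\cM$ is \emph{simple} (no loops, no two parallel elements): deleting the loops of $\cM$ leaves $\cL$ unchanged as a poset, and if $e,f$ are parallel then $\gl(f)=\varepsilon\,\gl(e)$ for all $\gl\in\cL$ with a single fixed sign $\varepsilon$, so $\gl\mapsto\gl|_{E\setminus\cbr{f}}$ is a poset isomorphism of $\cL$ onto the covector poset $\cL_{\cM\setminus f}$ of the deletion $\cM\setminus f$. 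Using the Covector Axioms and the dictionary with the underlying matroid $\underline\cM$, one checks that $\cL$ is graded by a rank function $\rho$ with $\rho(\zvec)=0$, that the $\le$-maximal covectors (the \emph{topes}) are exactly those of rank $r=\rank\cM$, and that $\cL$ becomes a lattice after adjoining a maximum $\hat{1}$; this ``big face lattice'' $\cL\cup\cbr{\hat{1}}$ will turn out to be the augmented face lattice of $X(\cL)$.

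\emph{Inductive construction.} The plan is to induct on $|E|$; the cases of rank $r\le 1$ are immediate, $X(\cL)$ being empty for $r=0$ and a pair of points, i.e.\ $S^0$, for $r=1$. Fix $e\in E$ and put $\cM'=\cM\setminus e$. If $e$ is a coloop, then $\cL\cong\cL_{\cM'}\times\signs$ and $X(\cL)$ is the suspension of the shellable sphere $X(\cL_{\cM'})$, which settles this case. If $e$ is not a coloop, then $\rank\cM'=r$, while the contraction $\cM/e$ has rank $r-1$ and covector poset isomorphic to $\cL^0:=\set{\gl\in\cL}{\gl(e)=0}$; so by the inductive hypothesis $X(\cL_{\cM'})$ is a shellable CW $(r-1)$-sphere and $X(\cL^0)$ a shellable CW $(r-2)$-sphere. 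On the combinatorial side $\cM$ is a \emph{single-element extension} of $\cM'$; the restriction $\pi\colon\cL\to\cL_{\cM'}$, $\gl\mapsto\gl|_{E\setminus\cbr{e}}$, is order-preserving and surjective, and each fibre has one or three elements according to whether the corresponding closed cell of $X(\cL_{\cM'})$ avoids or meets the ``pseudo-equator''. The key geometric input --- precisely the content of the Topological Representation Theorem --- is that this purely combinatorial extension datum is realized topologically: the sphere $X(\cL_{\cM'})$ can be cut along a tamely, in fact piecewise-linearly, embedded copy of the $(r-2)$-sphere $X(\cL^0)$ and subdivided so that the resulting regular CW complex has face poset exactly $\cL$.

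\emph{Identification of the space and the shelling.} Granting this, the embedded sphere $X(\cL^0)$ separates $X(\cL)$ into the two closed subcomplexes supported on $\set{\gl\in\cL}{\gl(e)\in\cbr{0,+}}$ and on $\set{\gl\in\cL}{\gl(e)\in\cbr{0,-}}$, each of which is a regular CW ball with boundary $X(\cL^0)$; gluing the two balls along this common boundary $(r-2)$-sphere shows that the underlying space of $X(\cL)$ is $S^{r-1}$. A shelling is obtained by shelling the first ball so that the facets incident to $X(\cL^0)$ come last and then continuing through the second ball; alternatively, one exhibits an EL-labeling --- equivalently a recursive coatom ordering --- of the big face lattice $\cL\cup\cbr{\hat{1}}$, which is the standard route to \cite[Proposition 4.7.24]{BVSWZ}.

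\emph{Where the difficulty lies.} The genuinely hard point is the topological realization asserted in the second paragraph: that a combinatorial single-element extension of $\cM'$ corresponds to a \emph{tame} modification of the sphere $X(\cL_{\cM'})$, producing honest PL balls and PL spheres rather than mere homology balls and homology spheres. Making this precise requires carrying a piecewise-linear (or otherwise controlled) structure through the induction, matching ``adjoining one pseudo-equator to a pseudosphere arrangement'' with ``single-element extension of an oriented matroid'' at every stage; this is exactly the difficulty of the Folkman--Lawrence theorem and its Edmonds--Mandel sharpening, and the reason we cite the statement rather than reprove it. The remaining bookkeeping --- that $\pi$ is a subdivision map with fibres of size $1$ or $3$, that $X(\cL^0)\cong X(\cL_{\cM/e})$, and that the reduction to the simple case is harmless --- is routine, following from the Covector Axioms and the behaviour of cocircuits under restriction and contraction.
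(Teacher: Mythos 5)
This statement is not proved in the paper: it is quoted verbatim as a known result, with the proof deferred entirely to \cite[Theorem 4.3.3 and Proposition 4.7.24]{BVSWZ} (the Folkman--Lawrence Topological Representation Theorem together with the Edmonds--Mandel shellability theorem). There is therefore no in-paper argument to compare your proposal against.

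Your proposal correctly recognizes this and treats the statement as a citation, giving only a structural sketch. The sketch is a reasonable account of the standard proof strategy (reduction to simple oriented matroids, induction via single-element extensions/deletions, the interplay between $\cL$, $\cL_{\cM\setminus e}$ and $\cL_{\cM/e}$, and the shelling via recursive coatom orderings of the big face lattice), and you accurately flag that the genuinely hard point is the tameness/PL-control of the pseudosphere arrangement, which is exactly why the result is cited rather than reproved. Two minor cautions if you ever expand this: (i) the ``fibre of size $1$ or $3$'' description of $\pi\colon\cL\to\cL_{\cM\setminus e}$ is best justified via the localization $\sigma$ of a single-element extension (\cite[\S 7.1]{BVSWZ}) rather than asserted from the cell picture, and (ii) the shelling is obtained in \cite{BVSWZ} from a recursive coatom ordering of $\cL\cup\{\hat 1\}$ directly, not by concatenating shellings of the two closed halves, so your ``alternatively'' is in fact the route actually taken there. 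Neither affects the correctness of citing the result as you do.
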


For a sign vector $\gl \in \signs^E$ and a subset $F \subseteq E$, let $\gl|_F$ be the \emph{restriction} of $\gl$ to $F$, that is, the sign vector on $F$ with $\gl|_F(e) = \gl(e)$ for all $e \in F$. 
For an oriented matroid $\cM = (E,\cL)$ and its covector set $\cC$, we set
\[
\cC|_F := \Min\set{\gl|_F}{\gl \in \cC,\ F \cap \supp(\gl) \neq \void}, \quad \cL|_F:= \set{\gl|_F}{\gl \in \cL}.
\]
The pair $\cM|_F := \pr{F,\cL|_F}$ is then an oriented matroid on $F$ and called the \emph{restriction} of $\cM$ to $F$. Note that the set of cocircuits of $\cM|_F$ is equal to $\cC|_F$.

An element $e \in E$ is said to be \emph{in general position} (\cite[Proposition 7.2.2 (2)]{BVSWZ}) if $e$ is not a coloop and $\cC|_{E'} \subseteq \set{\gl|_{E'}}{\gl \in \cC,\ e \in \supp(\gl)}$, where $E' := E \setminus \cbr e$.

\begin{rem}\label{rem:genpos-uniform}%
  For an oriented matroid $\cM = (E,\cL)$, under the condition that $E$ admits a non-coloop element, every $e \in E$ is in general position if and only if $\cM$ is uniform, or equivalently $\underline\cC = \set{F \subseteq E}{\#F = s}$ for some $s \in \NN$.
\end{rem}

An affine oriented matroid is a triple $\cM := (E,\cL,g)$ consisting of a finite set $E$, a set $\cL$ of some sign vectors, and the distinguished element $g \in E$ such that $(E,\cL)$ is an oriented matroid and $g$ is not a loop. 
The \emph{positive part} $\cL^+$ of $\cM$ and the \emph{bounded complex} $\cB_\cM$ of $\cM$ are, by definition,
\[
\cL^+ := \set{\gl \in \cL}{\gl(g) = +}, \quad
\cB_\cM := \set{\gl \in \cL^+}{(\zvec, \gl] \subseteq \cL^+},
\]
where $(\zvec, \gl] := \set{\gm \in \cL}{\zvec < \gm \le \gl}$.

\begin{conv}
  Let $n \in \NN$ and set $\br n := \cbr{1,\dots,n}$. Throughout this paper, we prefer to set the base set $E$ of affine oriented matroids to be $\br n \cup \cbr g$ with $g \not\in \br n$. Moreover unless otherwise stated, we tacitly assume that the distinguished element $g$ of $\cM = (\br n \cup \cbr g, \cL, g)$ is not a coloop; hence it follows that $\rank \cM = \rank \cM|_{\br n}$.
\end{conv}

For a CW complex $X$, we write the set of the $i$-cells as $X^{(i)}$ and  set $X^{(*)} := \bigcup_i X^{(i)}$.
The empty set $\void$ is considered as the $-1$-cell. The set $X^{(\ast)}$ forms a partially ordered set by $\sigma \ge \tau$ $\stackrel{\rm def}\Longleftrightarrow$ $\overline{\sigma} \supset \tau$, where $\overline\void := \void$. Henceforth we refer to $X^{(*)}$ also as a CW complex. Recall that $X$ is said to be regular if the characteristic map of each $\sigma \in X^{(\ast)}$ is homeomorphism. Hence the closure of each $\sigma$ is homeomorphic to a closed ball, when $X$ is regular. See \cite{BVSWZ,Sp} for the definition and basic properties of a (regular) CW complex.

 
By Theorem~\ref{thm:TRT}, $\cB_\cM \cup \cbr{\zvec}$ is isomorphic to the face poset of a regular CW complex that is a subcomplex of $X(\cL)$. We write $X(\cB_\cM)$ to denote this complex and call it the \emph{bounded complex} of $\cM$ as the same with $\cB_\cM$ by abuse of terminology.

A typical example of $\cB_\cM$ and $X(\cB_\cM)$ is the bounded complex of an affine hyperplane arrangement.

\begin{exmp}[affine hyperplane arrangement]\label{exmp:aff-hyp-arr}
  Let $\cbr{L_1,\dots ,L_n,L_g}$ and $v_1,\dots ,v_n,v_g$ be a linear hyperplane arrangement in $\RR^{d+1}$ and vectors defining $L_i$'s. Set $H_g := \set{v \in \RR^{d+1}}{\ang{v,v_g} = 1}$ and $H_i := L_i \cap H_g$ for $i = 1,\dots ,n$. The set $\cA := \cbr{H_1,\dots ,H_n}$ is then an affine hyperplane arrangement in $\RR^d\cong H_g$.
  Any affine hyperplane arrangement in $\RR^d$ is constructed in this way. Letting $\cL$ be the covector set given by $\cbr{L_1,\dots ,L_n,L_g}$ as Example~\ref{exmp:lin-hyp-arr}, we obtain the affine oriented matroid $\cM := (\br n \cup \cbr g,\cL,g)$. The positive part $\cL^+$ is then equal to $\set{\gl_v}{v \in H_g}$. Hence $\cA$ corresponds to $\cL^+$ in this sense. Moreover $\cB_\cM$ corresponds to the bounded regions given by $\cA$, whenever $V := \bigcap_{i \in \br n \cup \cbr g}L_i = 0$, or equivalently $\rank\cM = d + 1$. See Examples \ref{exmp:cm-arr} and \ref{exmp:nongp-cm-arr} for concrete examples. Clearly, $g$ is coloop if and only if $\cA$ is central (i.e., $\bigcap_{i=1}^nH_i \neq \void$).

  Assume $V = 0$. Then $g$ is in general position if and only if $\dim_\RR L_A \cap L_g = 0$ for all $A \subseteq \br n$ with $\dim_\RR L_A = 1$, where $L_A := \bigcap_{i \in A}L_i$. When $\cA$ is a line arrangement in $\RR^2$, this is equivalent to say that no two distinct lines in $\cA$ are parallel. 
\end{exmp}

Let $S := \kk\br{x_1,\dots ,x_n}$ be the polynomial ring over a field $\kk$ with indeterminates $x_1,\dots ,x_n$, and set $\widetilde S := S \otimes_\kk \kk\br{y_1,\dots ,y_n}$, where $y_1,\dots ,y_n$ are variables. For a sign vector $\gl \in \signs^{\br n \cup \cbr g}$, define
\[
\sfm_\gl := \prod_{i \in \gl^{-1}(+) \setminus \cbr g}x_i \cdot \prod_{i \in \gl^{-1}(-) \setminus \cbr g}y_i \in \widetilde S,
\]
where we set $\sfm_\zvec = 1$. For an affine oriented matroid $\cM := (\br n \cup \cbr{g},\cL,g)$, the ideal
\[
O_\cM := \pr{\sfm_\gl \mid \gl \in \cB_\cM} = \pr{\sfm_\gl \mid \gl \in \cC \cap \cL^+}.
\]
of $\widetilde S$ is called the \emph{matroid ideal} of $\cM$. Note that $g$ is a coloop if and only if $O_\cM = \widetilde S$.

\begin{exmp} \label{exmp:cm-arr}
(1) Let $v_1 := (0,1,0)$, $v_2 := (-1,1,0)$, $v_3 := (1,0,0)$, $v_4 := (1,1,-1)$, $v_g := (0,0,1)$ be the vectors in $\RR^3$, and $L_1,L_2,L_3,L_4, L_g$ be the linear hyperplanes in $\RR^3$ defined by these vectors, respectively. Define $H_i$ ($i \in \br 4 \cup \cbr g$) and $\cM := (\br 4 \cup \cbr g,\cL,g)$ as Example~\ref{exmp:aff-hyp-arr}. 
  Figure~\ref{fig:pp-cm-arr} indicates the affine hyperplane arrangement $\cA := \set{H_i}{i \in \br 4}$ and Figure~\ref{fig:zp-cm-arr} the section by $L_g$.
  The simplicial complex displayed in the shaded area and its face poset correspond to $X(\cB_\cM)$ and $\cB_\cM$ respectively. For this example, $g$ is in general position. The vertices in the figure correspond with the cocircuits $\gl$ of $\cM$ with $\gl(g) = +$, and hence $O_\cM = (x_1x_2,x_1x_3,y_2x_3,y_4)$.

  \begin{figure}[htbp]
  \begin{tabular}{cc}
    \begin{minipage}[b]{.49\textwidth}
      \centering
      \includegraphics{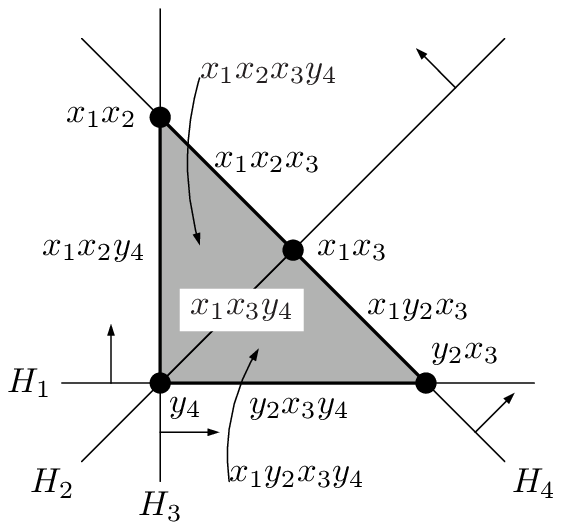}
      \caption{$X(\cB_\cM)$} \label{fig:pp-cm-arr}
    \end{minipage}
    &\begin{minipage}[b]{.49\textwidth}
      \centering
      \includegraphics{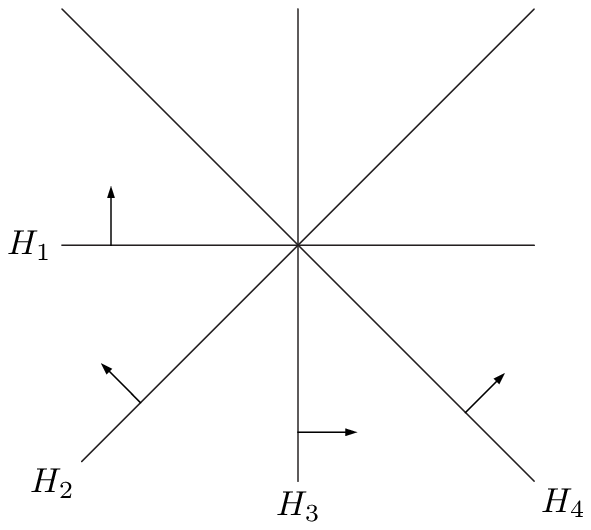}
      \caption{Section by $L_g$} \label{fig:zp-cm-arr}
    \end{minipage}
  \end{tabular}
\end{figure}

(2) Let $v_1 := (1,0,1)$, $v_2 := (1,0,-1)$, $v_3 := (-1,1,0)$, $v_4 := (1,1,0)$, $v_g := (0,0,1)$.
The affine hyperplane arrangement $\cA$ and the section by $L_g$ are then as in Figures~\ref{fig:pp-noncm-arr} and \ref{fig:zp-noncm-arr}.
It thus follows that $g$ is not in general position and $O_\cM = (x_1x_4,x_1y_2,x_1y_3,y_2x_3,y_2y_4)$. 
\begin{figure}[htbp]
  \begin{tabular}{cc}
    \begin{minipage}[b]{.49\textwidth}
      \centering
      \includegraphics{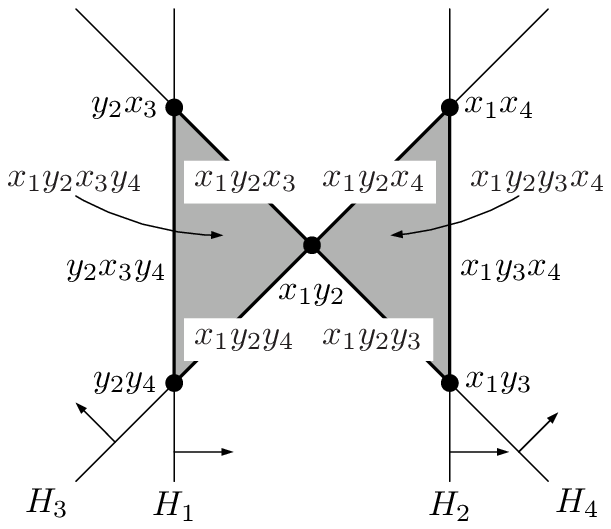}
      \caption{$X(\cB_\cM)$} \label{fig:pp-noncm-arr}
    \end{minipage}
    &\begin{minipage}[b]{.49\textwidth}
      \centering
      \includegraphics{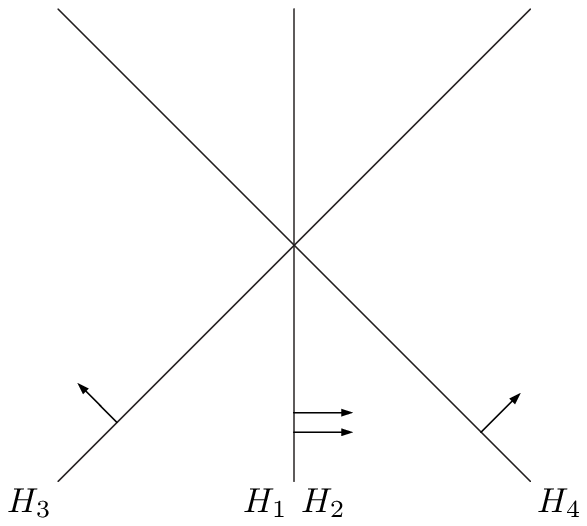}
      \caption{Section by $L_g$} \label{fig:zp-noncm-arr}
    \end{minipage}
  \end{tabular}
\end{figure}
\end{exmp}
%
%
%
%

In their paper \cite{NPS}, I. Novik, A. Postnikov, and B. Sturmfels showed that $O_\cM$ has a minimal cellular $\widetilde S$-free resolution supported by the regular CW complex $X(\cB_\cM)$ (see Section~\ref{sec:CM-implies-CM} for the definition of cellular resolutions).

\begin{thm}[Novik--Postnikov--Sturmfels {\cite[Theorem 2.2]{NPS}}] \label{thm:NPS-cell-res}
Let $\cB_\cM$ be the bounded complex of an affine oriented matroid $\cM := (\br n \cup \cbr g, \cL, g)$, and let $\gr: X(\cB_\cM)^{(*)} \longto \NN^{2n}$ be the function defined as $\gr(\gl) := \deg(\sfm_\gl)$. The pair $(X(\cB_\cM)^{(*)},\gr)$ then gives a minimal cellular resolution of $\widetilde S/O_\cM$ supported by $X(\cB_\cM)^{(*)}$.
\end{thm}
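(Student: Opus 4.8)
The plan is to verify that $(X(\cB_\cM)^{(*)},\gr)$ satisfies the hypotheses of the Bayer--Sturmfels acyclicity criterion for cellular resolutions. In the form needed here, that criterion says: if $X$ is a finite regular CW complex each of whose cells carries a monomial label --- each vertex an arbitrary monomial, each positive\nobreakdash-dimensional face the least common multiple of the labels on its vertices, and the empty cell the monomial $1$ --- then the associated $\ZZ^{2n}$\nobreakdash-graded cellular chain complex is a free resolution of $\wS/I$, where $I$ is the ideal generated by the vertex labels, if and only if for every $\ba\in\NN^{2n}$ the subcomplex
\[
  X_{\preceq\ba} := \set{\sigma\in X^{(*)}}{\sfm_\sigma \text{ divides } x^\ba}
  \qquad \left( x^\ba:=\prod_{i=1}^n x_i^{a_i}\prod_{i=1}^n y_i^{a_{n+i}} \right)
\]
has vanishing reduced homology over $\kk$ in every nonnegative degree; moreover the resolution is minimal exactly when the label strictly increases along every covering relation of $X^{(*)}$. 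So I would organize the argument around (i) the formal properties of the labeling, (ii) minimality, and (iii) the homological vanishing of the $X_{\preceq\ba}$, taking $X=X(\cB_\cM)$ with labels $\sfm_\gl=x^{\gr(\gl)}$.

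Steps (i) and (ii) are essentially formal. The face poset of $X(\cB_\cM)$ is $\cB_\cM\cup\cbr\zvec$, and its atoms are exactly the cocircuits $\gl\in\cC\cap\cL^+$: if $\gl\in\cB_\cM$ is not a cocircuit then any $\gamma$ with $\zvec<\gamma<\gl$ lies in $\cB_\cM$ (since $\gamma\in(\zvec,\gl]\subseteq\cL^+$ and $(\zvec,\gamma]\subseteq(\zvec,\gl]\subseteq\cL^+$), while a cocircuit $Y$ with $Y(g)=+$ lies in $\cB_\cM$ because $(\zvec,Y]=\cbr Y\subseteq\cL^+$. Hence the vertex labels of $X(\cB_\cM)$ are the $\sfm_\gl$ with $\gl\in\cC\cap\cL^+$, which by definition generate $O_\cM$. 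Writing any $\gl\in\cB_\cM$ as a composition $Y_1\circ\cdots\circ Y_k$ of cocircuits $Y_j\le\gl$ shows that for each $i\in\br n$ with $\gl(i)\neq0$ some $Y_j$ satisfies $Y_j(i)=\gl(i)$, and each such $Y_j$ lies in $\cB_\cM$; therefore $\sfm_\gl=\lcm\set{\sfm_Y}{Y\le\gl,\ Y\in\cC}$, so the face labels are the required lcm's, and $\gamma\le\gl\imply\sfm_\gamma\mid\sfm_\gl$ makes the labeling weakly increasing. For minimality: distinct $\gl,\gamma\in\cB_\cM$ have $\gl|_{\br n}\neq\gamma|_{\br n}$ (they agree at $g$, both being $+$), hence $\sfm_\gl\neq\sfm_\gamma$; and $\sfm_\gl=1$ would force $\supp(\gl)\subseteq\cbr g$, impossible since $g$ is not a coloop. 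Thus $\gr$ is injective on $\cB_\cM\cup\cbr\zvec$, so it strictly increases along every cover.

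The heart of the matter is (iii). Since $O_\cM$ is squarefree, every $\sfm_\sigma$ is squarefree, so $X(\cB_\cM)_{\preceq\ba}$ depends only on $\supp(\ba)$; we may therefore take $\ba\in\cbr{0,1}^{2n}$, encoded by a pair $A,B\subseteq\br n$, in which case $X(\cB_\cM)_{\preceq\ba}=\cbr\void\cup\set{\gl\in\cB_\cM}{\gl^{-1}(+)\cap\br n\subseteq A,\ \gl^{-1}(-)\cap\br n\subseteq B}$. If $x^\ba\notin O_\cM$ then, by the lcm property, no cell other than $\void$ carries a label dividing $x^\ba$, so $X(\cB_\cM)_{\preceq\ba}=\cbr\void$, whose reduced homology vanishes in all nonnegative degrees. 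If $x^\ba\in O_\cM$, I would prove $X(\cB_\cM)_{\preceq\ba}$ contractible by identifying it, up to the obvious relabeling, with a bounded complex built from $\cM$: reorienting the elements of $B\setminus A$ leaves $\cB_\cM$ (hence $X(\cB_\cM)$) unchanged as a CW complex but replaces $(A,B)$ by $(A\cup B,\ A\cap B)$, so we may assume $B\subseteq A$; then the conditions defining $X(\cB_\cM)_{\preceq\ba}$ --- namely $\gl(i)=0$ for $i\notin A$ and $\gl(i)\neq-$ for $i\in A\setminus B$ --- exhibit it as the bounded complex of the contraction $\cM/(\br n\setminus A)$ in which each element of $A\setminus B$ plays the role of an additional distinguished (``affine'') element alongside $g$. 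Its contractibility then follows by the same kind of deformation\nobreakdash-retraction argument that establishes Bj\"orner--Ziegler \cite[Theorem 4.5.7]{BVSWZ}, applied after iterating away the extra affine elements. Combining this with (i), (ii) and the Bayer--Sturmfels criterion yields the theorem.

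The main obstacle is the last step of (iii): making precise the identification of $X(\cB_\cM)_{\preceq\ba}$ with a genuine, possibly multiply\nobreakdash-affine, bounded complex (note that for $i\in A\setminus B$ one keeps the \emph{closed} positive side, unlike the open positive side kept for $g$), so that a Bj\"orner--Ziegler\nobreakdash-type contractibility statement applies --- or, should that identification prove cumbersome, giving a self\nobreakdash-contained generic\nobreakdash-sweep deformation retraction of $X(\cB_\cM)_{\preceq\ba}$ onto a point directly. Everything else --- that the vertex labels generate $O_\cM$, the lcm and monotonicity properties of $\gr$, and minimality --- follows routinely from the covector combinatorics of the bounded complex.
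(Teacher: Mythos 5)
The paper does not prove Theorem~\ref{thm:NPS-cell-res}; it is quoted verbatim from \cite[Theorem 2.2]{NPS}, so there is no internal proof to compare your attempt against. Judged on its own, your steps (i) and (ii) are correct and routine: the atoms of $\cB_\cM\cup\cbr\zvec$ are exactly the cocircuits in $\cC\cap\cL^+$, the conformal decomposition of covectors into cocircuits gives the lcm property, and injectivity of $\gr$ (part of what the paper later calls faithfulness, Definition~\ref{faithful}) gives minimality. The reduction to squarefree degrees $\ba$ and the reorientation trick replacing $(A,B)$ by $(A\cup B,A\cap B)$ are also correct, modulo the slightly loose phrasing ``leaves $\cB_\cM$ unchanged'' — what is invariant is the labelled CW complex up to the swap $x_i\leftrightarrow y_i$ for reoriented $i$.

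The gap you flag in step (iii) is genuine, and it is exactly where the content of the theorem lies. After contracting $\br n\setminus A$, the complex you must show acyclic is $\set{\gl'\in\cB_{\cM'}}{\gl'(i)\in\cbr{0,+}\text{ for }i\in A\setminus B}\cup\cbr\zvec$. This is not the bounded complex of an affine oriented matroid in the sense used in the paper: the distinguished element $g$ imposes the strict condition $\gl(g)=+$ \emph{together with} the bounded-ness condition $(\zvec,\gl]\subseteq\cL^+$, whereas each extra $i\in A\setminus B$ imposes only the non-strict one-sided condition $\gl'(i)\ne-$ with no analogue of the interval condition. Björner--Ziegler \cite[Theorem 4.5.7]{BVSWZ} treats exactly one affine element, and there is no ready-made ``multiply-affine'' bounded-complex contractibility result in \cite{BVSWZ} to invoke, so ``applied after iterating away the extra affine elements'' is not an argument but a placeholder. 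Closing the gap would require either an honest poset isomorphism identifying this subposet with $\cB_{\cM''}\cup\cbr\zvec$ for a genuine affine oriented matroid $\cM''$ (which you do not exhibit and which is not routine), or a direct, self-contained proof that these half-open restrictions of bounded complexes are $\kk$-acyclic — which is precisely the nontrivial part of the Novik--Postnikov--Sturmfels proof. As written, the attempt establishes the easy bookkeeping but not the theorem.
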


Besides Theorem~\ref{thm:NPS-cell-res}, Novik et al.\ gave the following sufficient condition for Cohen--Macaulayness of $\widetilde S/O_\cM$.

\begin{thm}[Novik--Postonikov--Sturmfels {\cite[Corollary 2.6]{NPS}}] \label{thm:gen-pos}
  Let $\cM := (\br n \cup \cbr g, \cL ,g)$ be an affine oriented matroid of rank $r$ and assume $g$ is neither a loop nor a coloop. 
  If $g$ is in general position, then $\widetilde S/O_\cM$ is Cohen--Macaulay of dimension of $2n - r$.
\end{thm}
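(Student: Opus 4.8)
The plan is to show $\operatorname{depth}(\widetilde S/O_\cM)\ge 2n-r$ and $\dim(\widetilde S/O_\cM)\le 2n-r$; since $\operatorname{depth} M\le\dim M$ always holds, both quantities must then equal $2n-r$, which is exactly what Cohen--Macaulayness of dimension $2n-r$ asserts.

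For the lower bound on depth I would argue as follows, and this step uses \emph{no} general position hypothesis. By Theorem~\ref{thm:NPS-cell-res} the minimal free resolution of $\widetilde S/O_\cM$ is supported on $X(\cB_\cM)^{(*)}$, so $\pd_{\widetilde S}(\widetilde S/O_\cM)=\dim X(\cB_\cM)+1$. Since $X(\cB_\cM)$ is a subcomplex of $X(\cL)$, which by Theorem~\ref{thm:TRT} is an $(r-1)$-sphere, $\dim X(\cB_\cM)\le r-1$, whence $\pd_{\widetilde S}(\widetilde S/O_\cM)\le r$. (Equality will hold a posteriori, which recovers $\dim X(\cB_\cM)=r-1$; but only $\le r$ is needed.) By the Auslander--Buchsbaum formula, $\operatorname{depth}(\widetilde S/O_\cM)=2n-\pd_{\widetilde S}(\widetilde S/O_\cM)\ge 2n-r$.

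For the upper bound on dimension I would specialize $y_i\mapsto x_i$: this identifies $\widetilde S/\bigl(O_\cM+(y_1-x_1,\dots,y_n-x_n)\bigr)$ with $S/\overline{O}$, where $\overline{O}\subseteq S$ is generated by the squarefree monomials $\prod_{i\in\supp(\gl)\cap\br n}x_i$ for $\gl\in\cC\cap\cL^+$. Here is where general position is spent. I claim that, since $g$ is in general position and not a coloop, $\overline{O}$ is precisely the Stanley--Reisner ideal of the independence complex of the matroid $M_1$ on $\br n$ whose circuits are the supports of the cocircuits of $\cM|_{\br n}$ (equivalently $M_1=(\underline{\cM|_{\br n}})^*$). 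Indeed: (a) by the definition of general position, every cocircuit support of $\cM|_{\br n}$ has the form $\supp(\gl')\cap\br n$ for some cocircuit $\gl'$ of $\cM$ with $g\in\supp(\gl')$, and replacing $\gl'$ by $-\gl'$ if necessary we may take $\gl'(g)=+$; hence every circuit monomial of $M_1$ occurs among the generators of $\overline{O}$. (b) Conversely, for $\gl\in\cC\cap\cL^+$ the set $\supp(\gl)\cap\br n$ is nonempty --- otherwise $\sfm_\gl=1$ and $O_\cM=\widetilde S$, i.e.\ $g$ would be a coloop --- so, being the support of a nonzero covector of $\cM|_{\br n}$, it contains a cocircuit support of $\cM|_{\br n}$; hence each generator of $\overline{O}$ is divisible by a circuit monomial of $M_1$. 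Thus $\overline{O}$ equals the circuit ideal of $M_1$. Since $\rank M_1=n-\rank(\cM|_{\br n})=n-r$ (using $\rank(\cM|_{\br n})=\rank\cM=r$ because $g$ is not a coloop), we get $\dim S/\overline{O}=\rank M_1=n-r$. As passing to a quotient by $n$ ring elements lowers Krull dimension by at most $n$, this gives $\dim(\widetilde S/O_\cM)\le n+(n-r)=2n-r$.

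Putting the two bounds together, $2n-r\le\operatorname{depth}(\widetilde S/O_\cM)\le\dim(\widetilde S/O_\cM)\le 2n-r$, so $\widetilde S/O_\cM$ is Cohen--Macaulay of dimension $2n-r$. I expect the main obstacle to be part (a) of the dimension bound, i.e.\ establishing that general position of $g$ forces every cocircuit of the deletion $\cM|_{\br n}$ to be the restriction of a positive cocircuit of $\cM$, so that the specialization $\overline{O}$ really is a matroid (independence-complex) ideal of the anticipated rank $n-r$. The remaining ingredients are soft: the projective-dimension bound falls straight out of the NPS cellular resolution together with the sphericity theorem, and the final squeeze is routine. (An alternative is to prove that $y_1-x_1,\dots,y_n-x_n$ is a regular sequence on $\widetilde S/O_\cM$ and transfer Cohen--Macaulayness from the matroid Stanley--Reisner ring $S/\overline{O}$; but that would require both the regular-sequence property and the Cohen--Macaulayness of matroid complexes, whereas the dimension argument above needs neither.)
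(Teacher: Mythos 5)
The paper does not give its own proof of this statement; it is quoted from \cite[Corollary 2.6]{NPS} and invoked as a black box (for instance in the implication (1) $\imply$ (2) of Theorem~\ref{thm:char-genpos}). Your argument is therefore a self-contained supplement rather than a rederivation, and it is correct. The depth bound $\opn{depth}(\wS/O_\cM) \ge 2n-r$ needs no general-position hypothesis: Theorem~\ref{thm:NPS-cell-res} gives $\pd_{\wS}(\wS/O_\cM) = \dim X(\cB_\cM)+1 \le r$ since $X(\cB_\cM)$ sits inside the $(r-1)$-sphere $X(\cL)$, and Auslander--Buchsbaum finishes. For the dimension bound, both halves of your matroid identification are verified correctly: claim (a) is essentially the definition of general position recorded in the paper (for $\mu \in \cC|_{\br n}$ pick $\gl \in \cC$ with $g\in\supp(\gl)$ and $\gl|_{\br n}=\mu$; flipping $\gl$ to $-\gl$ if $\gl(g)=-$ leaves the support, hence the monomial, unchanged), and claim (b) is immediate from $g$ not being a coloop together with the fact that a nonzero covector of $\cM|_{\br n}$ has support containing a cocircuit support. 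Thus $\overline{O_\cM}$ is the circuit ideal of $(\underline{\cM|_{\br n}})^*$, which has rank $n-r$, so $\dim S/\overline{O_\cM}=n-r$, hence $\dim \wS/O_\cM \le 2n-r$, and the squeeze closes. The obstacle you anticipated in part (a) is actually harmless: it is exactly the definition of ``$g$ in general position'' used in Section~\ref{sec:prelim}. It is worth noting that your matroid identification is, in substance, the chain (1) $\imply$ (5) $\imply$ (4) of Theorem~\ref{thm:char-genpos}; the paper instead obtains (4) from (1) by routing through (2) via \cite{NPS}, so your argument closes that logical cycle in the opposite direction and renders the equivalence in Theorem~\ref{thm:char-genpos} fully self-contained. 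Your parenthetical alternative (regular sequence plus Cohen--Macaulayness of matroid complexes) would also work using Corollary~\ref{cor:redCM}, but, as you observe, the squeeze dispenses with both ingredients.
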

%
%

Unfortunately, the converse of Theorem~\ref{thm:gen-pos} is not true in general.

\begin{exmp} \label{exmp:nongp-cm-arr}
Let $v_1 := (1,0,1)$, $v_2 := (-1,0,1)$, $v_3 := (0,1,0)$, $v_g := (0,0,1)$ be the vectors in $\RR^3$ and $L_1,L_2,L_3,L_g$ be the linear hyperplanes defined by $v_1,v_2,v_3,v_g$, respectively. As Figures~\ref{fig:pp-nongp-cm-arr} and~\ref{fig:zp-nongp-cm-arr} show, $O_\cM = (x_1,x_2)$ and $g$ is not in general position. On the other hand, $O_\cM$ is clearly Cohen--Macaulay.
\begin{figure}[htbp]
\begin{minipage}{.49\textwidth}
\begin{center}
\includegraphics{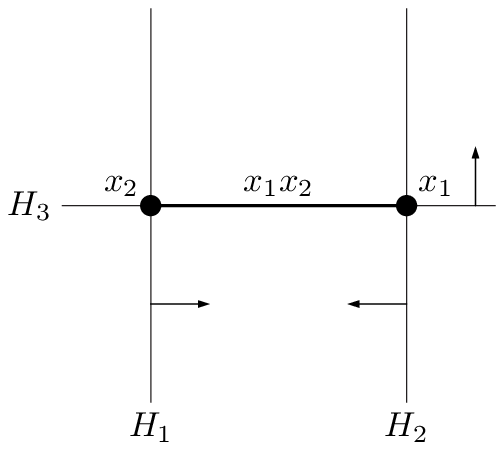}
\caption{$\cB_\cM$} \label{fig:pp-nongp-cm-arr}
\end{center}
\end{minipage}
\begin{minipage}{.49\textwidth}
\begin{center}
\includegraphics{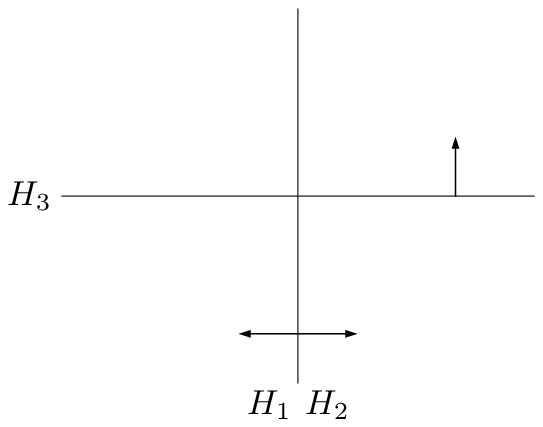}
\caption{Section by $L_g$} \label{fig:zp-nongp-cm-arr}
\end{center}
\end{minipage}
\end{figure}
\end{exmp}

\section{$\widetilde S/O_\cM$ is Cohen--Macaulay implies $\cB_\cM$ is Cohen--Macaulay} \label{sec:CM-implies-CM}
The goal of this section is to show that if $\widetilde S/O_\cM$ is Cohen--Macaulay then $X(\cB_\cM)$ is Cohen--Macaulay as a topological space. To do this, we recall and develop the theory of cellular free resolutions of monomial ideals. See \cite{BS} for details of the theory.

Recall that $S=\kk[x_1, \ldots, x_n]$ is a polynomial ring. 
In the rest of  this section, let  $X^{(\ast)}$ be  a finite regular CW complex. 
Let $\gr :X^{(\ast)} \to \NN^n$ be an order preserving map with $\gr(\void)=\zvec$. Here $\NN^n$  is ordered 
 by componentwise comparison.  Now we define the $\ZZ^n$-graded chain complex $\cF_\bullet^X$  of free $S$-modules as follows: 
set 
$$\cF^X_i := \bigoplus_{\sigma \in X^{(i-1)}} S(-\gr(\sigma)) \, e_\sigma,$$ 
where $e_\sigma$ is an $S$-free
basis, and define the differential map $\partial_i : \cF^X_i \to \cF^X_{i-1}$ by
$$
e_\sigma \longmapsto  \sum_{\tau \in X^{(i-2)}} [\sigma:\tau] \cdot x^{\gr(\sigma) - \gr(\tau)} \cdot e_\tau, $$
where we set $x^{\ba} = \prod_{i=1}^nx_i^{a_i} \in S$ for $\ba = (a_1,\dots ,a_n) \in \NN^n$ and $[\sigma:\tau] \in \ZZ$ denotes the coefficient of $\tau$ in the image
of $\sigma$ by the differential map in the cellular homology of $X^{(\ast)}$. In other words, $[-:-]$ is an {\it incidence function} of the regular CW complex $X^{(\ast)}$. 
In this situation,   we have $\sigma \gtrdot \tau$, if and only if $[\sigma:\tau]  \ne 0$, if and only if  $[\sigma:\tau] = \pm 1$. Here $\sigma \gtrdot \tau$ means that $\sigma$ \emph{covers} $\tau$, i.e., $\sigma > \tau$ and there is no $\upsilon \in X^{(*)}$ between $\sigma$ and $\tau$.
Clearly, $H_0( \cF^X_\bullet) \cong S/I$, where $I$ is the monomial ideal generated by $\{ x^{\gr(v)} \mid v \in X^{(0)}\}$. If $\cF^X_\bullet$ is acyclic, then it gives a free resolution of $S/I$. 
In this case, we call $\cF^X_\bullet$ a {\it cellular resolution} of $S/I$ supported by $X^{(\ast)}$. 

\begin{dfn}\label{faithful}
With the above situation, we say a cellular resolution $\cF_\bullet^X$ is {\it faithful}, if the following are satisfied 
\begin{enumerate}
\item $\gr$ is injective, 
\item $\gr (\sigma) > \gr(\tau)$ implies $\sigma > \tau$. 
\end{enumerate}
\end{dfn}

It is easy to see that if a cellular resolution $\cF_\bullet^X$ is faithful then it is a minimal free resolution.
The Novik--Postnikov--Sturmfels resolution $\cF_\bullet^{X(\cB_\cM)}$ of  $\widetilde{S}/O_\cM$ given  by $\gr: \gl \mapsto \deg(\sfm_\gl)$ (see Theorem~\ref{thm:NPS-cell-res}) is clearly faithful. 

We say a subset $Y$ of $X^{(\ast )}$ is an {\it order filter}, if $\sigma \in X^{(\ast )}$, $\tau \in Y$ and $\sigma \ge \tau$ 
imply $\sigma \in Y$. Typical examples of order filters are the following. 
\begin{enumerate}
\item For $\sigma \in X^{(\ast )}$, $X^{\ge \sigma} := \{\,  \tau  \in X^{(\ast)}\mid \tau \ge \sigma \, \}$ is an order filter. 
\item For an order preserving  map $\gr: X^{(\ast )} \to \NN^n$ and $\ba \in \ZZ^n$, $X^{\ge \ba} := \{\,  \sigma \in X^{(\ast)} \mid \gr(\sigma) \ge \ba \, \}$ is an order filter. 
\end{enumerate}

For an order filter $Y$ of $X^{(\ast )}$, consider the cochain complex  
$C^\bullet(Y)$ of $\kk$-vector spaces with 
$$C^i(Y):= \bigoplus_{\sigma \in Y  \cap X^{(i)}} \kk \, \sigma$$
(here we regard $\sigma \in X^{(\ast)}$ as a basis element)
 and the differential map $\partial: C^i(Y) \to  C^{i+1}(Y)$ is given by 
$$\partial(\sigma)= \sum_{\tau \in Y  \cap X^{(i+1)}} [\tau : \sigma] \, \tau.$$

\begin{prop}\label{ring CM}
Let $I$ be a monomial ideal. Assume that $S/I$ admits a cellular minimal $S$-free resolution $\cF_\bullet^X$.
Then $S/I$ is Cohen--Macaulay if and only if $H^i(C^\bullet(X^{\ge \ba}))=0$ for all $\ba \in \ZZ^n$ and all $i \ne \dim X$.   
\end{prop}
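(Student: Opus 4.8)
The plan is to compute the $\ZZ^n$-graded pieces of the $\Ext$ modules of $S/I$ from the dual of the cellular resolution $\cF_\bullet^X$, identify them with the cohomology groups of the complexes $C^\bullet(X^{\ge\ba})$, and then feed this into the standard homological criterion for Cohen--Macaulayness over a polynomial ring.

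First I would pin down the projective dimension. A finite regular CW complex of dimension $\dim X$ has cells in every dimension $-1,0,\dots,\dim X$ (inductively, the boundary of a regular top cell is a regularly CW $(\dim X -1)$-sphere), so $\cF^X_i\ne0$ precisely for $0\le i\le\dim X+1$; since $\cF_\bullet^X$ is \emph{minimal}, $\pd_S(S/I)=\dim X+1$. Next, over the Cohen--Macaulay ring $S$, for a nonzero finitely generated graded module $M$ one has $\codim M=\min\set{i}{\Ext^i_S(M,S)\ne0}$ (Rees's theorem, using that grade equals height in a Cohen--Macaulay ring) and $\max\set{i}{\Ext^i_S(M,S)\ne0}=\pd_S M$ (Nakayama applied to the top of a minimal resolution), and $M$ is Cohen--Macaulay iff these two ends coincide, by the Auslander--Buchsbaum formula. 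Thus $S/I$ is Cohen--Macaulay iff $\Ext^i_S(S/I,S)=0$ for all $i\ne\dim X+1$. It therefore suffices to establish the natural $\ZZ^n$-graded isomorphisms
\[
\Ext^{i}_S(S/I,S)_{-\ba}\;\cong\;H^{i-1}\bigl(C^\bullet(X^{\ge\ba})\bigr)\qquad(i\in\ZZ,\ \ba\in\ZZ^n),
\]
for then $\Ext^i_S(S/I,S)=0$ iff $H^{i-1}(C^\bullet(X^{\ge\ba}))=0$ for all $\ba$, and restricting $i$ to $\ZZ\setminus\{\dim X+1\}$ yields exactly the stated condition ``$H^j(C^\bullet(X^{\ge\ba}))=0$ for all $\ba$ and all $j\ne\dim X$''.

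To prove the displayed isomorphism, compute $\Ext^\bullet_S(S/I,S)$ as the cohomology of $\Hom_S(\cF_\bullet^X,S)$, whose term in cohomological degree $i$ is $\bigoplus_{\sigma\in X^{(i-1)}}S(\gr(\sigma))$ with differential the transpose of $\partial$. Fix $\ba$ and take the graded strand of degree $-\ba$. Since $S(\bm c)_{-\ba}=S_{\bm c-\ba}$ is one-dimensional, spanned by $x^{\bm c-\ba}$, when $\bm c\ge\ba$ and is $0$ otherwise, this strand in cohomological degree $i$ equals $\bigoplus_{\sigma\in X^{(i-1)},\,\gr(\sigma)\ge\ba}\kk=\bigoplus_{\sigma\in X^{\ge\ba}\cap X^{(i-1)}}\kk=C^{i-1}(X^{\ge\ba})$. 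The matrix entry of the transposed differential from the summand of $\tau\in X^{(i-1)}$ to that of $\sigma\in X^{(i)}$ is $[\sigma:\tau]\,x^{\gr(\sigma)-\gr(\tau)}$, which under the identifications $S_{\gr(\tau)-\ba}\cong\kk\cong S_{\gr(\sigma)-\ba}$ (sending $x^{\gr(\tau)-\ba}$ and $x^{\gr(\sigma)-\ba}$ to $1$) becomes the scalar $[\sigma:\tau]$; and because $X^{\ge\ba}$ is an order filter, $\sigma\in X^{\ge\ba}$ whenever $\tau\in X^{\ge\ba}$ and $\sigma\ge\tau$, so no summands are lost. Comparing with the coboundary $\partial(\tau)=\sum_{\sigma\in X^{\ge\ba}\cap X^{(i)}}[\sigma:\tau]\,\sigma$ of $C^\bullet(X^{\ge\ba})$, the degree $-\ba$ strand of $\Hom_S(\cF_\bullet^X,S)$, after the reindexing $i\mapsto i-1$, \emph{is} the complex $C^\bullet(X^{\ge\ba})$; taking cohomology gives the isomorphism.

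The one genuinely delicate point is this last step: verifying that the transposed cellular differential of $\cF_\bullet^X$ agrees, incidence signs and all, with the coboundary of $C^\bullet(X^{\ge\ba})$ on the relevant one-dimensional pieces. I expect no obstacle beyond this bookkeeping, which is clean once one records that $X^{\ge\ba}$ is an order filter, so the surviving entries of the differential in the $-\ba$ strand are precisely those coming from covers $\sigma\gtrdot\tau$ with both $\sigma,\tau\in X^{\ge\ba}$; everything else is the standard dictionary between minimal free resolutions and $\Ext$ together with the elementary characterization of the Cohen--Macaulay property over a polynomial ring recalled above.
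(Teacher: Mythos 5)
Your proposal is correct and follows essentially the same route as the paper's own proof: reduce to the vanishing of $\Ext^i_S(S/I,S)$ for $i\ne\pd_S(S/I)=\dim X+1$, and then identify the $\ZZ^n$-graded strand $\br{\Hom_S^\bullet(\cF_\bullet^X,S)}_{-\ba}$ with $C^{\bullet-1}(X^{\ge\ba})$ via the one-dimensionality of the pieces and the order-filter property of $X^{\ge\ba}$. The paper simply cites the $\Ext$-criterion and the equality $\pd_S(S/I)=\dim X+1$ as known, whereas you spell out the justifications (Rees, Auslander--Buchsbaum, and the cell-count in every dimension forced by regularity); these are fillings-in of details rather than a different argument.
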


\begin{proof}
It is well-known that  $S/I$ is Cohen--Macaulay if and only if  $\Ext_S^i(S/I,S)=0$ for all $i\ne \opn{proj-dim}_S S/I$. 
Since $\cF_\bullet^X$ is a minimal free resolution of $S/I$, we have 
$H^i(\Hom_S^\bullet(\cF_\bullet^X, S))\cong \Ext_S^i(S/I,S)$ and $\opn{proj-dim}_S S/I = \dim X + 1$. 
Hence  $S/I$ is Cohen--Macaulay if and only if  
$H^i(\Hom_S^\bullet(\cF_\bullet^X, S))=0$ for all $i \ne  \dim X + 1$. 
Since 
$$\Hom_S^i(\cF_\bullet^X, S) = \Hom_S(\cF_i^X, S) \cong \bigoplus_{\tau \in X^{(i-1)}} S(\gr(\tau)),$$
we have 
$$[\Hom_S^i(\cF_\bullet^X, S)]_{-\ba} \cong \bigoplus_{\substack{\tau \in X^{(i-1)} \\  \gr(\tau) \ge \ba}} \kk \, \tau$$
for all $\ba \in \ZZ^n$. By the construction of the differential maps of $\cF_\bullet^X$, we have  
$$[\Hom_S^\bullet(\cF_\bullet^X, S)]_{-\ba} \cong C^{\bullet-1}(X^{\ge \ba})$$ 
and hence 
$$[\Ext_S^i(S/I, S)]_{-\ba} \cong H^{i-1}(C^\bullet(X^{\ge \ba})).$$ So we are done. 
\end{proof}

Note that in the case where $X^{(\ast)}$ is a simplicial complex, for $\sigma \in X^{(*)}$, the complex $C^\bullet(X^{\ge \sigma})$ coincides with the complex given by shifting the augmented cochain complex of the link of $\sigma$, and hence its cohomologies are isomorphic to $H^i(X,X\setminus\cbr{x};\kk)$ for any $x \in \sigma$ whenever $\sigma \neq \void$ (cf.\ \cite[Lemma 63.1]{Mu}). This fact can be generalized to a finite regular CW complex as follows.

\begin{prop} \label{prop:link-iso}
  Let $X^{(\ast)}$ be a finite regular CW complex. For $\sigma \in X^{(*)}$, it follows that
  \[
  H^i(C^\bullet(X^{\ge\sigma})) \cong \begin{cases}
    \widetilde H^i(X;\kk) & \text{if }\sigma = \void, \\
    H^i(X,X\setminus\cbr{x};\kk) &\text{for any } x \in \sigma \text{ if }\sigma \neq \void
    \end{cases}
  \]
  for all $i$.
\end{prop}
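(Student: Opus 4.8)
Since the statement involves only the regular CW complex $X$ (not the grading $\gr$), my strategy will be to recognise $C^\bullet(X^{\ge\sigma})$ as a relative cellular cochain complex and then compute its cohomology by excision. The case $\sigma=\void$ is immediate: $X^{\ge\void}=X^{(*)}$, so $C^\bullet(X^{\ge\void})$ is, by the very definition in the text, the augmented cellular cochain complex of $X$ — the $(-1)$-cell $\void$ contributes the augmentation term $C^{-1}=\kk$ — whose cohomology is $\widetilde H^i(X;\kk)$ by definition of reduced cohomology. So I will concentrate on $\sigma\ne\void$.

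For such a cell $\sigma$ I would set $\operatorname{St}(\sigma):=\bigcup_{\tau\ge\sigma}\tau$ (open star), $\overline{\operatorname{St}}(\sigma):=\bigcup_{\tau\ge\sigma}\overline\tau$ (closed star) and $\operatorname{Lk}(\sigma):=\overline{\operatorname{St}}(\sigma)\setminus\operatorname{St}(\sigma)$. Because $X$ is finite and regular, $\operatorname{St}(\sigma)$ is open (its complement is the union of the closed cells $\overline\tau$ with $\tau\not\ge\sigma$), while $\overline{\operatorname{St}}(\sigma)$ and $\operatorname{Lk}(\sigma)$ are CW subcomplexes of $X$: the cells of $\overline{\operatorname{St}}(\sigma)$ are the $\upsilon$ having a common upper bound with $\sigma$, and $\operatorname{Lk}(\sigma)$ consists of those with $\upsilon\not\ge\sigma$. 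Hence the $i$-cells of $\overline{\operatorname{St}}(\sigma)$ not lying in $\operatorname{Lk}(\sigma)$ are exactly the elements of $X^{\ge\sigma}\cap X^{(i)}$, and since the incidence numbers of a subcomplex are the restrictions of those of the ambient complex, the relative cellular cochain complex $C^\bullet(\overline{\operatorname{St}}(\sigma),\operatorname{Lk}(\sigma);\kk)$ agrees, term by term and differential by differential, with $C^\bullet(X^{\ge\sigma})$. This yields $H^i(C^\bullet(X^{\ge\sigma}))\cong H^i(\overline{\operatorname{St}}(\sigma),\operatorname{Lk}(\sigma);\kk)$ for all $i$.

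The remaining step is to identify $H^i(\overline{\operatorname{St}}(\sigma),\operatorname{Lk}(\sigma);\kk)$ with $H^i(X,X\setminus\cbr{x};\kk)$ for an arbitrary point $x\in\sigma$. I would first excise the open set $X\setminus\overline{\operatorname{St}}(\sigma)$ from the pair $(X,X\setminus\cbr{x})$; this is legitimate since its closure avoids $x$, because $\operatorname{St}(\sigma)$ is an open neighbourhood of $x$ contained in $\overline{\operatorname{St}}(\sigma)$, and it gives $H^i(X,X\setminus\cbr{x};\kk)\cong H^i(\overline{\operatorname{St}}(\sigma),\overline{\operatorname{St}}(\sigma)\setminus\cbr{x};\kk)$. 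Then I would use that $\overline{\operatorname{St}}(\sigma)\setminus\cbr{x}$ deformation retracts onto $\operatorname{Lk}(\sigma)$ — near $\sigma$ the closed star is a cone over $\operatorname{Lk}(\sigma)$ with apex any interior point of the open cell $\sigma$, and one retracts radially away from $x$ — so $H^i(\overline{\operatorname{St}}(\sigma),\overline{\operatorname{St}}(\sigma)\setminus\cbr{x};\kk)\cong H^i(\overline{\operatorname{St}}(\sigma),\operatorname{Lk}(\sigma);\kk)$. Composing the isomorphisms finishes the proof, and in passing shows that $H^i(X,X\setminus\cbr{x};\kk)$ is independent of $x\in\sigma$, which is what legitimises the phrase ``for any $x\in\sigma$'' in the statement.

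\emph{Main obstacle.} Everything except the last point is formal manipulation of cells, incidence numbers and excision. The one place needing genuine care is the conical/deformation-retract statement for the closed star of a cell of a regular CW complex — the CW analogue of the simplicial fact recalled just before the proposition (cf.\ \cite[Lemma 63.1]{Mu}). I would handle it by passing to the barycentric subdivision $\operatorname{sd}(X)$, a simplicial complex homeomorphic to $X$, taking $x$ to be the barycentre of $\sigma$ so that the relevant closed star becomes an honest simplicial cone for which the retraction is classical; a small amount of extra bookkeeping is then needed to match the simplicial and CW versions of the link.
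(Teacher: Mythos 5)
Your first two steps are sound, and they are essentially the paper's own opening move repackaged. Identifying $C^\bullet(X^{\ge\sigma})$ with the relative cellular cochain complex of the pair $(\overline{\opn{St}}(\sigma),\opn{Lk}(\sigma))$ is correct, and your excision is legitimate since $\opn{St}(\sigma)$ is an open neighbourhood of $x$ inside $\overline{\opn{St}}(\sigma)$. (In fact you can recognise $C^\bullet(X^{\ge\sigma})$ directly as the relative cellular cochain complex of $(X,A)$, where $A$ is the subcomplex of cells $\tau\not\ge\sigma$; that is what the paper does, and it makes the excision detour unnecessary. Note also that the paper passes through $\ZZ$-coefficients via the universal coefficient theorem and the five lemma, but since $\kk$ is a field that is an inessential refinement.)

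The gap is concentrated entirely in the last claim, which is exactly where the paper does all of its real work. You assert that ``near $\sigma$ the closed star is a cone over $\opn{Lk}(\sigma)$ with apex any interior point of the open cell $\sigma$'' and that one can retract radially away from $x$. This conical picture is simply false for a general regular CW complex. Take $X=S^2$ with the regular CW structure having two cells in each of dimensions $0,1,2$, and $\sigma$ a vertex $v$: then $\overline{\opn{St}}(v)=S^2$, $\opn{Lk}(v)$ is the single opposite vertex, and a $2$-sphere is not a cone on a point, so there is no radial retraction (the deformation retraction $S^2\setminus\cbr{v}\simeq\mathrm{pt}$ does hold, but not for the reason you give). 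Your proposed repair via barycentric subdivision does not close the gap either: taking $x=\hat\sigma$ a vertex of $\opn{sd}(X)$ replaces $\overline{\opn{St}}(\sigma)$ and $\opn{Lk}(\sigma)$ by the \emph{simplicial} closed star and link of $\hat\sigma$ in $\opn{sd}(X)$, which are entirely different subsets of $|X|$, and relating the resulting relative cohomology back to $H^i(C^\bullet(X^{\ge\sigma}))$, which is built from the original CW structure, is a genuine problem rather than ``a small amount of extra bookkeeping.'' Moreover that route would at best handle the barycentre and would still need an argument for arbitrary $x\in\sigma$.

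What the paper actually does is prove directly that $A$ (equivalently, that $\opn{Lk}(\sigma)$, as the two statements are equivalent by gluing the identity on the rest of $X$) is a strong deformation retract of $X\setminus\cbr{x}$, by an inductive peeling argument: list the cells of $X^{\ge\sigma}$ from the top down, and remove one maximal cell $\tau$ at a time; since $X$ is regular, $\overline\tau$ is a closed ball and $x\in\partial\overline\tau$ (or $x$ is an interior point, when $\tau=\sigma$), so the elementary fact that $\partial B\setminus\cbr{p}$ is a strong deformation retract of $B\setminus\cbr{p}$ for a closed ball $B$ and $p\in\partial B$ gives the retraction one step at a time. That argument needs no conical structure and works verbatim for every $x\in\sigma$; you would need to replace your final sketch by something of this kind for the proof to go through.
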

\begin{proof}
  The case where $\sigma = \void$ is clear. Assume $\sigma \neq \void$ and $x \in \sigma$.
  Let $A$ be the subspace of $X$ corresponding to the subcomplex $\set{\tau \in X}{\tau \not\ge \sigma}$. 
  Clearly $x \not\in A$ and
\[
H^i(X,A;\kk) \cong H^i(C^{\bullet}(X^{\ge\sigma}))
\]
for all $i$.
  Hence it suffices to show that the maps $H_i(X,A,\kk) \to H_i(X,X\setminus\cbr x;\kk)$ induced from the inclusions $A \subseteq X \setminus\cbr x \subseteq X$ are isomorphisms for all $i$.
  Henceforth every isomorphism is the one induced from inclusion.
  By a standard argument with the universal coefficient theorem and the five lemma (cf.\ \cite[Corollary 5.3.15]{Sp}), the assertion above holds true if $H_i(X,A,\ZZ) \cong H_i(X,X\setminus\cbr x;\ZZ)$ for all $i$.
  As is well-known (cf.\ \cite[Lemma 6.8.6]{Sp}), the long exact sequence induced from $A \subseteq X \setminus\cbr{x} \subseteq X$ implies that $H_i(X,A;\ZZ) \cong H_i(X,X\setminus\cbr{x};\ZZ)$ for all $i$ if and only if 
\[
H_i(A;\ZZ) \cong H_i(X\setminus\cbr{x};\ZZ)
\]
for all $i$.
  Consequently we have only to show that the latter assertion holds true.

  Set $\Gamma := X^{(*)} \setminus A^{(*)}$.
  The cell $\sigma$ is then the least cell in $\Gamma$.
  Note that since $X$ is regular, the closure $\overline \tau$ of each cell $\tau$ of $X$ is homeomorphic to a closed ball through the characteristic map.
  Take a maximal cell $\tau_1$ in $\Gamma$ and let $X_1$ be the closed subset of $X$ corresponding to the subcomplex $X^{(*)} \setminus \cbr{\tau_1}$.
  If $\tau_1 = \sigma$, then $X_1 = A$ and $X \setminus \cbr x = A \cup (\overline\sigma\setminus \cbr x)$. Moreover $\partial\overline\sigma = A \cap (\overline\sigma \setminus \cbr x)$ is a strong deformation retract of $\overline \sigma \setminus \cbr x$, and hence $A$ is a strong deformation retract of $X \setminus \cbr x$. In the case where $\tau_1 \neq \sigma$, it follows that $x \in \partial\overline{\tau_1}$ and
  \[
  (X_1 \setminus \cbr x) \cap (\overline{\tau_1}\setminus \cbr x) = \partial\overline{\tau_1} \setminus x.
  \]
  For a closed ball $B$ and its points $p$ in $\partial B$, $\partial B \setminus \cbr p$ is a strong deformation retract of $B \setminus \cbr p$.
  Hence $\partial\overline{\tau_1}\setminus \cbr x$ is also a strong deformation retract of $\overline{\tau_1} \setminus \cbr x$.
  Consequently $X_1 \setminus \cbr x$ is a strong deformation retract of $X \setminus \cbr x$. Thus we can replace $X$ by $X_1$ and $\Gamma$ by $X_1^{(*)} \setminus A^{(*)}$.

  This procedure stops by finite steps since $\Gamma$ is a finite set. Therefore we conclude that $H_i(A;\ZZ) \cong H_i(X\setminus\cbr{x};\ZZ)$ for all $i$, as desired.
\end{proof}

A finite regular CW complex $X^{(\ast)}$ always admits a finite simplicial complex whose geometric realization is homeomorphic to the underlying space $X$. In fact, we can take the barycentric subdivision. 
We say $X^{(\ast)}$ (or $X$)  is Cohen--Macaulay over $\kk$, if the Stanley--Reisner ring $\kk[\Delta]$ is Cohen--Macaulay.
This condition does not depend on the particular choice of $\Delta$. 
In fact, $X$ is Cohen--Macaulay over $\kk$ if and only if 
$$\widetilde{H}^i(X;\kk)= H^i(X, X \setminus \cbr{x};\kk)=0$$
for all $i < \dim X$ and all $x \in X$. This is a classical result of Munkres. See for example \cite[II. Proposition~4.3]{St}. In our setting we have the following.

\begin{lem}\label{CM criterion}
With the above notation,  $X$ is Cohen--Macaulay if and only if 
$H^i(C^\bullet(X^{\ge \sigma}))=0$ for all $i \ne \dim X$ and all $\sigma \in X^{(\ast )}$. 
\end{lem}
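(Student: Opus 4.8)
The plan is to reduce the statement --- essentially formally --- to the topological Cohen--Macaulayness criterion of Munkres recalled immediately above, using Proposition~\ref{prop:link-iso} as the dictionary between the complexes $C^\bullet(X^{\ge\sigma})$ and the relevant (local) cohomology of $X$.

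First I would dispose of the cohomological degrees above $\dim X$. For every $\sigma \in X^{(\ast)}$ and every $i > \dim X$ we have $X^{\ge\sigma} \cap X^{(i)} = \void$ because $X^{(i)} = \void$, so $C^i(X^{\ge\sigma}) = 0$ and hence $H^i(C^\bullet(X^{\ge\sigma})) = 0$ with no hypothesis needed. Therefore the condition appearing in the lemma is equivalent to
\[
H^i(C^\bullet(X^{\ge\sigma})) = 0 \quad \text{for all } \sigma \in X^{(\ast)} \text{ and all } i < \dim X.
\]
Next I would split this family of conditions according to whether $\sigma = \void$ or $\sigma \neq \void$. By Proposition~\ref{prop:link-iso}, the instances with $\sigma = \void$ say exactly that $\widetilde H^i(X;\kk) = 0$ for all $i < \dim X$; and for $\sigma \neq \void$, choosing any point $x$ in the open cell $\sigma$, they say that $H^i(X, X\setminus\cbr x;\kk) = 0$ for all $i < \dim X$. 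Since $X$ is the disjoint union of its open cells, every point of $X$ lies in the interior of exactly one cell, which is necessarily nonempty; so as $\sigma$ ranges over the nonempty cells and $x$ over the interior of $\sigma$, the point $x$ ranges over all of $X$, and $H^i(X, X\setminus\cbr x;\kk)$ depends on $x$ alone. Hence the $\sigma \neq \void$ instances are together equivalent to $H^i(X, X\setminus\cbr x;\kk) = 0$ for all $x \in X$ and all $i < \dim X$. Combining the two parts, the lemma's condition is equivalent to the vanishing of $\widetilde H^i(X;\kk)$ and of all the $H^i(X, X\setminus\cbr x;\kk)$ for $i < \dim X$, which is precisely the criterion of Munkres for $X$ to be Cohen--Macaulay over $\kk$; this proves both implications simultaneously.

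I do not anticipate a serious obstacle; the proof is bookkeeping once Proposition~\ref{prop:link-iso} is in hand. The two points meriting a little care are the mismatch between the range ``$i \neq \dim X$'' in the lemma and the range ``$i < \dim X$'' in Munkres' theorem --- handled by the vanishing of $X^{(i)}$ for $i > \dim X$ --- and the verification that letting $\sigma$ run over the nonempty cells and $x$ over the corresponding open cell genuinely exhausts all of $X$, so that no local condition $H^i(X, X\setminus\cbr x;\kk) = 0$ is missed.
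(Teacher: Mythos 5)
Your proof is correct and takes the same route as the paper: the paper's proof is a one-line appeal to Proposition~\ref{prop:link-iso} and Munkres' criterion, and your write-up simply fills in the bookkeeping (the $i > \dim X$ vanishing and the fact that cells exhaust $X$) that the paper leaves implicit.
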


\begin{proof}
The assertions are immediate from Proposition~\ref{prop:link-iso}  and the above remark on the Cohen--Macaulay property of a topological space. 
\end{proof}

The following is one of the main results of the present paper.

\begin{thm}\label{CM properties}
Let $I \subset S$ be a monomial ideal. Assume that the quotient $S/I$ admits  a faithful cellular resolution $\cF_\bullet^X$. 
If $S/I$ is Cohen--Macaulay, then  the supporting complex $X$ is Cohen--Macaulay over $\kk$.  
\end{thm}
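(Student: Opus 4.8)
The plan is to deduce the topological Cohen--Macaulayness of $X$ from the algebraic Cohen--Macaulayness of $S/I$ by matching up the two cohomological criteria already in place. Lemma~\ref{CM criterion} says that $X$ is Cohen--Macaulay over $\kk$ exactly when $H^i(C^\bullet(X^{\ge\sigma}))=0$ for all $i\neq\dim X$ and all $\sigma\in X^{(*)}$, while Proposition~\ref{ring CM} (applicable since a faithful cellular resolution is in particular minimal) says that $S/I$ is Cohen--Macaulay exactly when $H^i(C^\bullet(X^{\ge\ba}))=0$ for all $i\neq\dim X$ and all $\ba\in\ZZ^n$. Because the cochain complex $C^\bullet(Y)$ depends only on $Y$ as a subset of $X^{(*)}$, it will suffice to show that every principal order filter $X^{\ge\sigma}$ coincides with an order filter of the form $X^{\ge\ba}$; the natural candidate is $\ba:=\gr(\sigma)$.

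Thus the core of the argument is the claim that $X^{\ge\sigma}=X^{\ge\gr(\sigma)}$ for every $\sigma\in X^{(*)}$. The inclusion $X^{\ge\sigma}\subseteq X^{\ge\gr(\sigma)}$ is immediate from $\gr$ being order preserving. For the reverse inclusion, I would take $\tau$ with $\gr(\tau)\ge\gr(\sigma)$ and split into two cases: if $\gr(\tau)=\gr(\sigma)$, then injectivity of $\gr$ (Definition~\ref{faithful}(1)) forces $\tau=\sigma$, so $\tau\ge\sigma$; if $\gr(\tau)>\gr(\sigma)$, then condition~(2) of Definition~\ref{faithful} gives $\tau>\sigma$, so again $\tau\ge\sigma$. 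This settles the claim, and it also covers the case $\sigma=\void$, where $\gr(\void)=\zvec$ and both filters equal all of $X^{(*)}$.

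With the claim established the proof finishes at once: assuming $S/I$ is Cohen--Macaulay, Proposition~\ref{ring CM} gives $H^i(C^\bullet(X^{\ge\ba}))=0$ for all $\ba\in\ZZ^n$ and all $i\neq\dim X$; specializing to $\ba=\gr(\sigma)\in\NN^n\subseteq\ZZ^n$ and using $C^\bullet(X^{\ge\gr(\sigma)})=C^\bullet(X^{\ge\sigma})$ yields $H^i(C^\bullet(X^{\ge\sigma}))=0$ for all $\sigma\in X^{(*)}$ and all $i\neq\dim X$. Lemma~\ref{CM criterion} then gives the conclusion.

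As for where the difficulty lies: essentially all the substantive work has already been packaged into Propositions~\ref{ring CM} and~\ref{prop:link-iso} and Lemma~\ref{CM criterion}, so the only genuinely new ingredient is the order-filter identification above. The one point that requires care is that injectivity of $\gr$ is really needed here — without it a cell $\tau\neq\sigma$ with $\gr(\tau)=\gr(\sigma)$ but $\tau\not\ge\sigma$ could belong to $X^{\ge\gr(\sigma)}\setminus X^{\ge\sigma}$ and break the claim — which is precisely why the hypothesis of \emph{faithfulness}, rather than mere minimality of the cellular resolution, is what makes the theorem work.
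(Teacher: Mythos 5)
Your proposal is correct and follows essentially the same route as the paper's proof: invoke Proposition~\ref{ring CM}, establish $X^{\ge\sigma}=X^{\ge\gr(\sigma)}$ from faithfulness, and conclude via Lemma~\ref{CM criterion}. You merely spell out the case split (equality versus strict inequality of degrees) that the paper leaves implicit when claiming the order-filter identification.
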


\begin{proof}
By Proposition~\ref{ring CM},  we have $H^i(C^\bullet(X^{\ge \ba}))=0$ for all $\ba \in \ZZ^n$ and all $i \ne \dim X$. 
Since $\cF_\bullet^X$ is faithful now,  we have    $X^{\ge \sigma}= X^{\ge \gr(\sigma)}$ for all $\sigma \in X^{(\ast)}$. 
Hence  it follows that $H^i(C^\bullet(X^{\ge \sigma}))=0$ for all $\sigma \in X^{(\ast)}$ and all $i \ne \dim X$. 
So the assertion follows from Lemma~\ref{CM criterion}.  
\end{proof}

\begin{cor}\label{cor: ring CM => B_M CM}
Let $O_\cM$ be the ideal associated with an affine oriented matroid $\cM$. 
If $\wS/O_\cM$ is Cohen--Macaulay, then the bounded complex $X(\cB_\cM)$ of $\cM$ is Cohen--Macaulay.  
\end{cor}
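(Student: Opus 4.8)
The plan is to deduce the corollary directly from Theorem~\ref{CM properties}. Although that theorem is stated for the polynomial ring $S = \kk[x_1,\dots,x_n]$, both its statement and its proof go through verbatim over any polynomial ring over $\kk$; in particular they apply to $\wS = \kk[x_1,\dots,x_n,y_1,\dots,y_n]$, with $2n$ variables and $\NN^{2n}$-gradings taking the place of $n$ variables and $\NN^n$-gradings.

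The steps are as follows. First, by Theorem~\ref{thm:NPS-cell-res}, the pair $(X(\cB_\cM)^{(*)},\gr)$ with $\gr(\gl) = \deg(\sfm_\gl)$ gives a minimal cellular resolution $\cF_\bullet^{X(\cB_\cM)}$ of $\wS/O_\cM$ supported on $X(\cB_\cM)^{(*)}$. Second, and this is the conceptual key, one needs that this resolution is \emph{faithful} in the sense of Definition~\ref{faithful} (minimality alone would not suffice, since the proof of Theorem~\ref{CM properties} uses the identity $X^{\ge\sigma} = X^{\ge\gr(\sigma)}$). This is the observation already recorded in the text: each $\sfm_\gl$ is squarefree, so its multidegree $\gr(\gl)$ records exactly the sets $\gl^{-1}(+)\setminus\cbr g$ and $\gl^{-1}(-)\setminus\cbr g$, while $\gl(g) = +$ for every $\gl \in \cB_\cM$; hence $\gr(\gl)$ determines $\gl$, so $\gr$ is injective, and moreover $\gr(\sigma) \ge \gr(\tau)$ forces $\tau(e) \le \sigma(e)$ for all $e \in \br n \cup \cbr g$, i.e.\ $\tau \le \sigma$ in $\cL$ and hence in $X(\cB_\cM)^{(*)}$. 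Thus conditions (1) and (2) of Definition~\ref{faithful} hold.

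Granting these, the conclusion is immediate: since $\wS/O_\cM$ is Cohen--Macaulay by hypothesis, Theorem~\ref{CM properties}, applied with $\wS$ for $S$, $O_\cM$ for $I$, and $X(\cB_\cM)$ for $X$, gives that $X(\cB_\cM)$ is Cohen--Macaulay over $\kk$, which is exactly the claim. I do not expect a genuine obstacle at this stage: all the real work is contained in Proposition~\ref{ring CM}, Proposition~\ref{prop:link-iso}, Lemma~\ref{CM criterion} and Theorem~\ref{CM properties}, and for the corollary itself the only thing to watch is the harmless passage from the $n$-variable ring $S$ to the $2n$-variable ring $\wS$ when invoking Theorem~\ref{CM properties}.
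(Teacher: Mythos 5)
Your proof is correct and matches the paper's own argument, which likewise deduces the corollary directly from Theorem~\ref{CM properties} applied (over $\wS$) to the Novik--Postnikov--Sturmfels resolution of Theorem~\ref{thm:NPS-cell-res}. The only difference is that you spell out the faithfulness of that resolution, which the paper records without proof just after Definition~\ref{faithful}; your verification of it is accurate.
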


\begin{proof}
An immediate consequence of Theorems~\ref{thm:NPS-cell-res} and \ref{CM properties}. 
\end{proof}

In the next section, we will see that the Cohen--Macaulayness of $\wS/O_\cM$ does not depend on $\kk$. 

\begin{rem}\label{rem:CM-imply-CM}
  (1) Let $I \subset S$ be a strongly stable monomial ideal (i.e., a Borel fixed ideal if $\chara(\kk)=0$). For its {\it alternative polarization} $\bpol(I) \subset T$, where $T$ is a larger polynomial ring, $T/\bpol(I)$ has a minimal cellular resolution $\cF_\bullet^X$ supported by a regular CW complex $X$. If $T/\bpol(I)$ is moreover Cohen--Macaulay, then $X$ is homeomorphic to a ball and hence is Cohen--Macaulay. See \cite{OY15} for details. Because $\cF_\bullet^X$ is faithful, the Cohen--Macaulayness of $X$ also follows from Theorem~\ref{CM properties}.

(2) The converses of Theorem~\ref{CM properties} and Corollary~\ref{cor: ring CM => B_M CM} are far from true. 
For example, consider the affine oriented matroid $\cM$ given by the affine hyperplane arrangement of Figure~\ref{square}. The bounded complex $X(\cB_\cM)$ is clearly Cohen--Macaulay. On the other hand, 
by Corollary~\ref{cor:redCM} below,
$\wS/O_\cM$ is Cohen--Macaulay if and only if so is $S/\overline{O_\cM} = \kk[x_1, \ldots, x_4]/(x_1x_4, x_1x_3, x_2x_3, x_2x_4)$. The latter ring is just the Stanley--Reisner ring of two (disjoint) 1-simplexes and hence is not Cohen--Macaulay. Therefore neither is $\wS/O_\cM$.
\begin{figure}
\begin{center}
\includegraphics{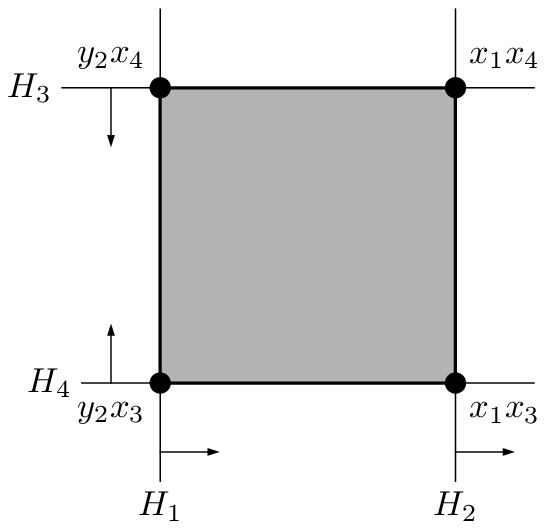}
\caption{$X(\cB_\cM)$} \label{fig:cmgap}
\label{square}
\end{center}
\end{figure}

(3) As shown in \cite{MSY}, a minimal free resolution of the quotient $S/I$ by a  {\it generic monomial ideal}  $I$ is supported by a simplicial complex $\Delta_I$ called the {\it Scarf complex} of $I$. 
It is easy to see that  $\cF_\bullet^{\Delta_I}$ is faithful.
Hence, by Theorem~\ref{CM properties}, if $I$ is  a Cohen--Macaulay generic monomial ideal, then $\Delta_I$ is Cohen--Macaulay. 
However a stronger result has been obtained. 
In fact, \cite[Theorem~2.5]{MSY} states that $\Delta_I$ is shellable in this situation. 
On the other hand, it is not homeomorphic to a ball, nor is it a homology manifold. For example, $(x^3y^2, y^3z^2, z^3x^2, xyz)$ is a Cohen--Macaulay generic monomial ideal, but $\Delta_I$ consists of 3 line segments joined at a point.   
This is in contrast to the other examples appearing in this paper (see also Corollary~\ref{cor:X-hmfd}). 

(4) We have no idea whether the  faithfulness assumption  is really necessary for Theorem~\ref{CM properties}. 
The Eliahou--Kervaire resolutions of stable monomial ideals $I$ are typical examples of 
non-faithful cellular resolutions. In this case, if $S/I$ is Cohen--Macaulay, then 
the supporting CW complex $X^{(\ast)}$ is Cohen--Macaulay. In fact, the authors have shown that $X$ is homeomorphic to a ball in this situation (\cite{OY15}).  
\end{rem}
\section{Cohen--Macaulayness of oriented matroid ideals}
From the definition of $O_\cM$, it is natural to expect the sequence $x_1-y_1,\dots ,x_n-y_n$ to be regular on $\widetilde S/O_\cM$. Novik et al.\ indeed showed that this is true when $\widetilde S/O_\cM$ is Cohen--Macaulay (\cite[Corollary 2.7]{NPS}). In this section, we will show that the sequence is \emph{always} regular on $\widetilde S/O_\cM$.
Only in this section, for an affine oriented matroid $\cM= (\br n \cup \cbr g, \cL ,g)$, we allow $g$ to be a loop, and in this case, we set $O_\cM = 0$ and $\cB_\cM = \void$.

Let us first recall the following lemma that can be found in \cite{EN}.

\begin{lem}[cf.\ {\cite[Lemma 5.8]{EN}}] \label{lem:rad-decomp}
Let $R$, $R'$ be noetherian rings and $f:R \to R'$ a homomorphism of rings. For a radical ideal $I$ of $R$ with minimal primary decomposition $I = \bigcap_{k=1}^s \fp_k$, it follows that $f(I) = \bigcap_{k=1}^sf(\fp_k)$ if $f(I)$ is radical.
\end{lem}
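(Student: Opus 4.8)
The plan is to prove the nontrivial inclusion by combining the hypothesis that $f(I)$ is radical (so it equals the intersection of its minimal primes) with prime avoidance. Throughout, $f(\mathfrak b)$ denotes the image of $\mathfrak b$ when $f$ is surjective --- which is the only case needed in the applications --- and the extension ideal $\mathfrak b R'$ in general; the argument is insensitive to this choice, and I would point this out at the start.

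First I would dispose of the easy direction. Since $I \subseteq \fp_k$ for every $k$, applying $f$ gives $f(I) \subseteq f(\fp_k)$ for every $k$, and hence $f(I) \subseteq \bigcap_{k=1}^s f(\fp_k)$. For the reverse inclusion I would use that $f(I)$ is radical to write
\[
f(I) = \sqrt{f(I)} = \bigcap_{\mathfrak Q} \mathfrak Q ,
\]
where $\mathfrak Q$ ranges over the minimal primes of $R'$ containing $f(I)$. Fix one such $\mathfrak Q$ and set $\mathfrak q := f^{-1}(\mathfrak Q)$, a prime ideal of $R$. From $f(I) \subseteq \mathfrak Q$ we get $I \subseteq \mathfrak q$, i.e.\ $\bigcap_{k=1}^s \fp_k \subseteq \mathfrak q$; since $\mathfrak q$ is prime, it contains $\fp_k$ for some $k$. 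Applying $f$ yields $f(\fp_k) \subseteq \mathfrak Q$, and therefore $\bigcap_{j=1}^s f(\fp_j) \subseteq f(\fp_k) \subseteq \mathfrak Q$. Intersecting over all such $\mathfrak Q$ gives $\bigcap_{j=1}^s f(\fp_j) \subseteq f(I)$, which together with the first inclusion finishes the proof.

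I do not expect a genuine obstacle: the statement becomes essentially formal once radicality is used to translate it into a statement about primes. The only points that need a word of care are the interpretation of $f(\fp_k)$ together with the trivial verification that $\fp_k \subseteq f^{-1}(\mathfrak Q)$ forces $f(\fp_k) \subseteq \mathfrak Q$, and the remark that the noetherian hypothesis enters only to ensure the minimal primary decomposition of $I$ is finite --- the argument itself never uses primeness of the individual $\fp_k$, only that $I = \bigcap_k \fp_k$ and that $f(I)$ is radical.
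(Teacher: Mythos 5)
Your proof is correct, and it is the natural argument for this statement: the easy inclusion is immediate, and for the reverse inclusion you pull back each prime $\mathfrak Q$ lying over $f(I)$, use that the prime $f^{-1}(\mathfrak Q)$ must contain one of the finitely many $\fp_k$, and push back down. All the delicate points are handled appropriately --- the remark about surjectivity (versus extension) so that $f(I)$ is genuinely an ideal, the observation that primeness of the $\fp_k$ is never used, and the observation that noetherianity is needed only to guarantee the intersection is finite (an infinite intersection $\bigcap \fp_k \subseteq \mathfrak q$ with $\mathfrak q$ prime need not contain any single $\fp_k$).

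Note that the paper itself gives no proof of this lemma: it is stated with only a reference to \cite[Lemma~5.8]{EN}, and is applied later exclusively to surjective maps (the specializations $\varphi_i$ and $\pi$), where your interpretation is the clean one. So there is no internal proof in the paper to compare against; your argument is a self-contained and correct substitute for the citation.
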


For $A \subseteq \br n$, we set $S[A] := S \otimes_\kk \kk\br{y_i \mid i \in A}$. Let $\cI_A$ be the family of squarefree monomial ideals $I$ of $S[A]$ satisfying the following properties:
\begin{enumerate}
\item $x_iy_i \nmid \sfm$ for all $i \in A$ and $\sfm \in G(I)$.
\item $\cbr{x_i,y_i} \not\subseteq \fp$ for all $i \in A$ and $\fp \in \Ass(S[A]/I)$.
\end{enumerate}

\begin{lem} \label{lem:reg} The sequence $\cbr{x_i - y_i}_{i \in A}$ is regular on $S[A]/I$ for all $I \in \cI_A$.
\end{lem}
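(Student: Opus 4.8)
The plan is to induct on $\#A$, the base case $A = \void$ being trivial since then there is nothing to prove. For the inductive step, fix $j \in A$ and set $A' := A \setminus \cbr j$; it suffices to show that $x_j - y_j$ is a nonzerodivisor on $S[A]/I$ and that the quotient $S[A]/(I + (x_j - y_j))$ is isomorphic to $S[A']/I'$ for some $I' \in \cI_{A'}$, after which the remaining $\#A - 1$ elements form a regular sequence by the induction hypothesis. The natural candidate for $I'$ is the image $\bar I$ of $I$ under the specialization map $\pi: S[A] \to S[A']$ sending $y_j \mapsto x_j$ and fixing the other variables; note $S[A]/(x_j - y_j) \cong S[A']$ via $\pi$, so what we must check is precisely that $x_j - y_j$ is regular on $S[A]/I$ and that $\bar I$ lies in $\cI_{A'}$.

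First I would analyze $\bar I = \pi(I)$. Condition (1) for $I$ says no generator of $I$ is divisible by $x_jy_j$, so every minimal generator of $I$, written in terms of the variables, either avoids both $x_j$ and $y_j$ or involves exactly one of them; hence $\pi$ sends each squarefree generator to a squarefree monomial, and $\bar I$ is generated by squarefree monomials in $S[A']$. To identify the associated primes of $S[A']/\bar I$, the key tool is Lemma~\ref{lem:rad-decomp}: I would apply it to $f = \pi$ and $I = \bigcap_{k} \fp_k$ the minimal prime decomposition of $I$ (radical since $I$ is squarefree). The hypothesis of that lemma requires $\pi(I) = \bar I$ to be radical, which holds because $\bar I$ is a squarefree monomial ideal by the previous observation; therefore $\bar I = \bigcap_k \pi(\fp_k)$. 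Each $\fp_k$ is a monomial prime generated by a subset of the variables, and condition (2) guarantees $\cbr{x_j, y_j} \not\subseteq \fp_k$, so $\pi(\fp_k)$ is again a monomial prime (we do not collapse $x_j$ and $y_j$ into a single non-prime ideal); intersecting and discarding redundant components, every associated prime of $S[A']/\bar I$ is of the form $\pi(\fp_k)$ for some $k$, and since $\fp_k$ contained at most one of $x_j, y_j$ and none of the structure at indices in $A'$ changed, conditions (1) and (2) for $\bar I \in \cI_{A'}$ follow. This also shows $\dim S[A']/\bar I$ can be controlled.

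The main obstacle — and the crux of the whole lemma — is proving that $x_j - y_j$ is a nonzerodivisor on $S[A]/I$, equivalently that $x_j - y_j$ avoids every associated prime of $S[A]/I$. By condition (2), for each $\fp \in \Ass(S[A]/I)$ at least one of $x_j, y_j$ is not in $\fp$; say $x_j \notin \fp$ (the other case is symmetric). One must then rule out $x_j - y_j \in \fp$. If $x_j - y_j \in \fp$ and $x_j \notin \fp$, then also $y_j \notin \fp$, so $x_j, y_j$ are both nonzero mod $\fp$ yet equal there; the contradiction should come from the monomial (hence combinatorial) nature of $\fp$ together with condition (1). Concretely, I expect to pass to the localization or to exhibit an explicit element: using that $I$ is a monomial ideal and $x_j y_j$ divides no generator, one shows $I : x_j$ and $I : y_j$ interact in a way forcing $\fp$ to be compatible with setting $y_j = x_j$, and then a standard argument (e.g.\ that a squarefree monomial ideal's Rees-type or polarization structure keeps $x_j - y_j$ regular) closes the gap. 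An alternative, cleaner route I would try first: show directly that $S[A]/(I + (x_j - y_j)) \cong S[A']/\bar I$ has the same dimension as $S[A]/I$ minus one — using the dimension formula for Stanley–Reisner rings and the combinatorial description of $\bar I$ above — and combine this with the fact that $\bar I$ being radical of the expected codimension, together with $I$ unmixed-enough along $j$ (guaranteed by condition (2)), forces $x_j - y_j$ to be a parameter and hence regular. I anticipate needing to be careful that $S[A]/I$ need not be Cohen–Macaulay, so a pure dimension count is not automatically enough; the honest argument will likely go through the associated-primes avoidance above, with conditions (1) and (2) doing exactly the work of preventing $x_j - y_j$ from lying in any of them.
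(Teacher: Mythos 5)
Your overall framework matches the paper's: induct on $\#A$, pass modulo $x_j - y_j$ to $S[A']/\bar I$ via the specialization $\pi$, and use Lemma~\ref{lem:rad-decomp} to transfer the minimal prime decomposition and conclude $\bar I \in \cI_{A'}$. That part of your proposal is correct and essentially identical to the paper's argument.

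However, there is a genuine gap at the step you yourself flag as the crux. You correctly reduce to showing $x_j - y_j \notin \fp$ for every $\fp \in \Ass(S[A]/I)$, and you correctly observe that if $x_j - y_j \in \fp$ then (by condition (2)) both $x_j \notin \fp$ and $y_j \notin \fp$. But you then stop short: you write ``the contradiction should come from the monomial nature of $\fp$,'' propose a localization argument, a ``standard'' polarization-type argument, and a dimension count you yourself note may fail without Cohen--Macaulayness -- none of which you actually carry out. The missing observation is short and you should state it: since $I$ is a monomial ideal, every $\fp \in \Ass(S[A]/I)$ is a \emph{monomial prime}, i.e.\ generated by a subset of the variables. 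An ideal generated by a subset of variables contains the element $x_j - y_j$ if and only if it contains both $x_j$ and $y_j$ (decompose $x_j - y_j$ into its two multigraded, equivalently monomial, components; each must lie in $\fp$). Condition (2) forbids this, so $x_j - y_j \in \fp$ is impossible. With that one line supplied, your argument is complete; without it, the proposal only identifies the obstacle rather than overcoming it, and the alternative routes you sketch (especially the dimension count) are either unnecessary or, as you note, potentially unsound absent Cohen--Macaulayness.
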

\begin{proof}
Let $\varphi_i: S[A] \to S[A \setminus\cbr{i}]$ be the canonical surjective ring homomorphism sending $y_i$ to $x_i$ and the other variables to themselves.
For each $i \in A$ and ideal $J \in \cI_A$, the element $x_i - y_i$ is regular on $S[A]/J$ since $x_i - y_i \not\in \fp$ for all $\fp \in \Ass\pr{S[A]/J}$. Moreover, through the isomorphism $S[A]/(x_i-y_i) \cong S[A \setminus \cbr{i}]$ induced by $\varphi_i$, we have the isomorphism
\[
S[A]/J \otimes_{S[A]} S[A]/(x_i - y_i) \cong S[A\setminus\cbr{i}]/\varphi_i(J).
\]
Thus we have only to show that $\varphi_i(I) \in \cI_{A\setminus\cbr i}$ for all $i \in A$.

Let $i \in A$. Since $I$ is squarefree and $x_iy_i \nmid \sfm$ for all $\sfm \in G(I)$, we have the decomposition $I = \bigcap_{\fp \in \Ass\pr{S[A]/I}}\fp$ and the ideal of $\varphi_i(I)$ of $S[A \setminus \cbr{i}]$ is again squarefree. In particular $\varphi_i(I)$ is radical. It thus follows from Lemma~\ref{lem:rad-decomp} that $\varphi_i(I) = \bigcap_{\fp \in \Ass\pr{S[A]/I}}\varphi_i(\fp)$. By the definition of $\varphi_i$, each $\varphi_i(\fp)$ is also prime. Hence $\varphi_i(I)$ inherits the property of $I$ on generators and associated primes. Therefore $\varphi_i(I) \in \cI_{A\setminus\cbr{i}}$.
\end{proof}
Let $\pi: \widetilde S \to S$ be the natural surjective map given by the specialization of $y_i$ to $x_i$ for each $i \in \br n$. 
Set $\overline{O_\cM} := \pi(O_\cM)$. As one can easily verify, it follows that
\[
\overline{O_\cM} = \pr{x_{\supp(\gl) \setminus \cbr g} \mid \gl \in \cC\cap \cL^+} = \pr{x_{F \setminus \cbr g} \mid F \in \underline\cC,\ g \in F},
\]
where $x_G := \prod_{i \in G}x_i$ for $G \subseteq \br n$.

As a corollary of the proposition above, the following holds.
\begin{cor}\label{cor:redCM}
For an affine oriented matroid $\cM = (\br n \cup \cbr{g},\cL ,g)$ such that $g$ is not a coloop, the sequence $x_1-y_1,\dots ,x_n - y_n$ is always regular on $\widetilde S/O_\cM$. In particular, $\widetilde S/O_\cM$ is Cohen--Macaulay if and only if so is $S/\overline{O_{\cM}}$.
\end{cor}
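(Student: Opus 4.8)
The plan is to deduce the corollary from Lemma~\ref{lem:reg}. Since $\wS = S[\br n]$, it suffices to verify that $O_\cM\in\cI_{\br n}$, i.e.\ that (i) $x_iy_i\nmid\sfm$ for every $i\in\br n$ and every $\sfm\in G(O_\cM)$, and (ii) $\cbr{x_i,y_i}\not\subseteq\fp$ for every $i\in\br n$ and every $\fp\in\Ass(\wS/O_\cM)$; we may assume $g$ is not a coloop, so $O_\cM\neq\wS$. Condition (i) is immediate: each element of $G(O_\cM)$ divides some $\sfm_\gl$ with $\gl\in\cC\cap\cL^+$, and for such $\gl$ the sets $\gl^{-1}(+)$ and $\gl^{-1}(-)$ are disjoint, so $x_iy_i\nmid\sfm_\gl$. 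All the content is in (ii).

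Since $O_\cM$ is squarefree, the primes in $\Ass(\wS/O_\cM)$ are exactly the monomial primes minimal over $O_\cM$; such a prime has the form $\fp_W := (x_i\mid i\in W_x)+(y_i\mid i\in W_y)$ for an inclusion-minimal $W\subseteq\cbr{x_1,\dots,x_n,y_1,\dots,y_n}$ meeting $\supp(\sfm_\gl)$ for all $\gl\in\cC\cap\cL^+$, where $W_x := \cbr{i\in\br n : x_i\in W}$ and $W_y := \cbr{i\in\br n : y_i\in W}$. Suppose, toward a contradiction, that $i\in W_x\cap W_y$ for some minimal $W$. Then $W\setminus\cbr{x_i}$ no longer meets every $\supp(\sfm_\gl)$, so there is $\gl^+\in\cC\cap\cL^+$ with $\supp(\sfm_{\gl^+})\cap W=\cbr{x_i}$; symmetrically there is $\gl^-\in\cC\cap\cL^+$ with $\supp(\sfm_{\gl^-})\cap W=\cbr{y_i}$. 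Spelling out $\sfm_{(\cdot)}$, this reads: $\gl^+(i)=+$, $\gl^-(i)=-$, and for every $k\in\br n\setminus\cbr i$ we have $\gl^\pm(k)=+\imply k\notin W_x$ and $\gl^\pm(k)=-\imply k\notin W_y$.

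Now $i\in S(\gl^+,\gl^-)$, so Covector Axiom (L3) produces $\gn\in\cL$ with $\gn(i)=0$ and $\gn(f)=(\gl^+\circ\gl^-)(f)$ for all $f\notin S(\gl^+,\gl^-)$; as $g\notin S(\gl^+,\gl^-)$ this yields $\gn(g)=+$, so $\gn\in\cL^+$. A short case analysis on whether $k\in S(\gl^+,\gl^-)$ (where $\cbr{\gl^+(k),\gl^-(k)}=\cbr{+,-}$) or $k\notin S(\gl^+,\gl^-)$ (where $\gn(k)\in\cbr{\gl^+(k),\gl^-(k)}$), using the displayed properties of $\gl^\pm$ together with $\gn(i)=0$, shows that no variable of $\sfm_\gn$ lies in $\fp_W$, i.e.\ $\sfm_\gn\notin\fp_W$. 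On the other hand, since $g\in\supp(\gn)$, some cocircuit $\gl^0\le\gn$ satisfies $g\in\supp(\gl^0)$ (as $\gn$ is the composition of the cocircuits below it; cf.\ \cite{BVSWZ}), and then $\gl^0(g)=+$ and $\sfm_{\gl^0}\mid\sfm_\gn$. Hence $\sfm_{\gl^0}\in O_\cM\subseteq\fp_W$ yet $\sfm_{\gl^0}\notin\fp_W$, a contradiction. Thus (ii) holds, so $O_\cM\in\cI_{\br n}$, and Lemma~\ref{lem:reg} gives that $x_1-y_1,\dots,x_n-y_n$ is a regular sequence on $\wS/O_\cM$.

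For the last assertion, $\pi$ induces an isomorphism $\wS/\bigl(O_\cM+(x_1-y_1,\dots,x_n-y_n)\bigr)\cong S/\overline{O_\cM}$, and passing to the quotient by a regular sequence of $n$ linear forms drops both $\dim$ and $\opn{depth}$ by exactly $n$, hence preserves and reflects the Cohen--Macaulay property (cf.\ \cite{BH}). Therefore $\wS/O_\cM$ is Cohen--Macaulay if and only if $S/\overline{O_\cM}$ is. The only genuinely nontrivial step is (ii): there the oriented-matroid structure---covector elimination, together with the fact that a covector is a composition of the cocircuits it dominates---is exactly what forbids a minimal prime of $O_\cM$ from containing a complementary pair $x_i,y_i$, and I expect this to be the main obstacle in making the argument precise.
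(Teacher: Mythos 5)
Your proof is correct and follows the same overall strategy as the paper: reduce to showing $O_\cM\in\cI_{\br n}$ and invoke Lemma~\ref{lem:reg}, with condition (i) being immediate. The difference is in how you handle condition (ii). The paper argues abstractly: from $\{x_i,y_i\}\subseteq\fp\in\Ass(\wS/O_\cM)$ it extracts a monomial $u\notin O_\cM$ with $x_iu,y_iu\in O_\cM$, produces cocircuits $\gl,\gm\in\cC\cap\cL^+$ with $x_i\mid\sfm_\gl\mid x_iu$ and $y_i\mid\sfm_\gm\mid y_iu$, and then applies \emph{strong cocircuit elimination} (\cite[Theorem 3.2.5]{BVSWZ}) at $i$, choosing $j=g$, to land directly on a cocircuit $\nu\in\cC\cap\cL^+$ with $\nu(i)=0$ and $\sfm_\nu\mid\lcm(\sfm_\gl,\sfm_\gm)/(x_iy_i)\mid u$, a contradiction. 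You instead make the associated-prime structure explicit via minimal vertex covers $W$, apply the weaker covector elimination axiom (L3) to get a covector $\gn$ (not a priori a cocircuit), carry out a case analysis to show $\sfm_\gn\notin\fp_W$, and then pass down to a cocircuit $\gl^0\le\gn$ with $g\in\supp(\gl^0)$ via the conformal decomposition of covectors into cocircuits (\cite[Proposition 3.7.2]{BVSWZ}, which the paper also uses elsewhere). Your route is correct but longer: the strong cocircuit elimination theorem does both the elimination and the ``stay inside $\cC\cap\cL^+$'' steps in one stroke, so if you want to tighten the argument you could replace (L3)~+~conformal decomposition by a single appeal to \cite[Theorem 3.2.5]{BVSWZ} with $j=g$. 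Your case analysis is sound (for $k\in S(\gl^+,\gl^-)\setminus\{i\}$ the signs $\{+,-\}$ both occur, giving $k\notin W_x\cup W_y$ outright; for $k\notin S(\gl^+,\gl^-)$ one has $\gn(k)\in\{\gl^+(k),\gl^-(k)\}$), and the final deduction from the regular sequence to the equivalence of Cohen--Macaulayness matches the paper.
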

\begin{proof}
We will show only the first assertion. The second is an immediate consequence of the first. By Lemma~\ref{lem:reg}, it is enough to show that $O_\cM \in \cI_{\br n}$.
Since $g$ is not a coloop, the ideal $O_{\cM}$ is a proper ideal of $\widetilde S$. Recall that $G(O_\cM) = \set{\sfm_\gl}{\gl \in \cC \cap \cL^+}$. It thus follows that $x_iy_i \nmid \sfm$ for all $\sfm \in G(O_\cM)$ and $i \in \br n$.

Suppose there exist $i \in \br n$ and $\fp \in \Ass(\widetilde S/O_\cM)$ such that $\cbr{x_i, y_i} \subseteq \fp$.
Then we can find a monomial $u \in \widetilde S \setminus O_\cM$ such that $x_iu \in O_\cM$ and $y_iu \in O_\cM$.
Hence there exist $\gl, \gm \in \cC \cap \cL^+$ such that $x_i \mid \sfm_\gl \mid x_iu$ and $y_i \mid \sfm_\gm \mid y_iu$.
By \cite[Theorem 3.2.5]{BVSWZ}, we have $\nu \in \cC \cap \cL^+$ such that $\nu(i) = 0$ and $\nu(k) = \gl(k)$ or $\nu(k) = \gm(k)$ for $k \in \supp(\nu)$. However this implies that $\sfm_\nu \mid (\lcm(\sfm_\gl,\sfm_\gm)/(x_iy_i))$ and hence that $\sfm_\nu \mid u$. This is a contradiction.
\end{proof}
For  $i \in \br n$, set  $\cL' := \{ \gl \in \cL \mid \gl(i)=0 \}$, and we sometimes regard it as a subset of $\signs^{E \setminus \cbr{i}}$. 
Then  $\cM':=(E \setminus \{ i \}, \cL', g)$  
is an affine oriented matroid again.  This is  the {\it contraction} $\cM/\cbr{i}$ of $\cM$ to $E \setminus \cbr{i}$ (see \cite[Lemma 4.1.8]{BVSWZ}).
Clearly, the bounded complex $\cB_{\cM'}$ of $\cM'$ can be identified with 
$$\cB_\cM \cap \cL' = \{ \gl \in \cB_\cM \mid \gl(i) = 0 \}.$$

\begin{lem}\label{rank lemma}
If   $\cB_\cM  \not \subset \cL'$ (that is, there is some $\gl \in \cB_\cM$ with $\gl(i) \ne 0$), then we have $\rank \cB_{\cM'} < \rank \cB_\cM$. 
\end{lem}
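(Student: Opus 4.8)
The plan is to analyze the rank of $\cB_{\cM'}$ via the rank of the poset $\cL'$ and the underlying matroid $\underline{\cM'}$, using the identification $\cM' = \cM/\cbr{i}$ established just before the statement. Recall that, by the Convention in Section~\ref{sec:prelim}, $g$ is not a coloop of $\cM$, and $\rank\cB_\cM = \rank\cM = \rank\cM|_{\br n}$ whenever this makes sense; more precisely, for an affine oriented matroid the rank of $\cB_\cM$ should be read as the rank of the poset $\cL^+$ (equivalently of $\cL$), since the maximal covectors in $\cB_\cM$ have the same support size as those in $\cL^+$. So the statement reduces to: if some $\gl \in \cB_\cM$ has $\gl(i) \neq 0$, then $\rank\cL' < \rank\cL$.

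First I would observe that the hypothesis ``some $\gl \in \cB_\cM$ has $\gl(i) \neq 0$'' in particular implies $i$ is not a loop of $\cM$, so $i$ is a genuine element for the contraction. Next, the key point is that $\cL' = \set{\gl \in \cL}{\gl(i) = 0}$ is, as a poset, the interval below any covector in $\cL$ whose support avoids $i$, and its rank equals $\rank\cM - 1$ precisely when $i$ is not a loop and not a coloop of the underlying matroid restricted appropriately — in matroid-theoretic terms, $\rank(\cM/\cbr i) = \rank\cM - 1$ iff $i$ is not a loop of $\cM$. This is the standard fact that contracting a non-loop element drops the rank by exactly one (\cite[Lemma 4.1.8, 3.3.x]{BVSWZ}). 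Since we have just argued $i$ is not a loop, we get $\rank\cM' = \rank\cM - 1$.

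Then I would translate this back to bounded complexes. Since $g$ is not a coloop of $\cM'$ either (one checks $g$ remains non-coloop after contracting a non-loop $i$, or invokes the running Convention applied to $\cM'$), we have $\rank\cB_{\cM'} = \rank\cM' = \rank\cM - 1$. On the other hand $\rank\cB_\cM = \rank\cM$ because the hypothesis guarantees $\cB_\cM$ contains a covector — hence $\cL^+ \neq \void$ — realizing the full rank. Combining, $\rank\cB_{\cM'} = \rank\cB_\cM - 1 < \rank\cB_\cM$, as desired.

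The main obstacle I anticipate is not the matroid-theoretic rank drop, which is classical, but being careful about the precise meaning of ``$\rank\cB_\cM$'': one must confirm that under the hypothesis $\cB_\cM$ actually attains the rank of the ambient oriented matroid (so that contracting and losing one rank genuinely decreases it), rather than $\cB_\cM$ being lower-dimensional for some degenerate reason. The hypothesis that there exists $\gl \in \cB_\cM$ with $\gl(i) \neq 0$ is exactly what rules this out: it forces $\cB_\cM$ to be nonempty and, together with the structure of $\cL^+$, of full rank, while after contraction the covector witnessing $\gl(i)\neq 0$ is no longer available, producing the strict inequality. I would make this last comparison explicit by exhibiting a maximal chain in $\cB_\cM$ through such a $\gl$ and noting no chain of that length survives in $\cB_{\cM'} \subseteq \cL'$.
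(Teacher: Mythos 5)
Your proposal has a genuine gap, and it lies exactly where you flagged your own worry: the identification of $\rank\cB_\cM$ with $\rank\cM$. You assert that ``the rank of $\cB_\cM$ should be read as the rank of the poset $\cL^+$'' and that the hypothesis forces $\cB_\cM$ to be of full rank, but neither is true. The bounded complex $\cB_\cM$ is a strict subposet of $\cL^+$ whose rank can be smaller than $\rank\cM$ even when $\cB_\cM$ is nonempty; Example~\ref{exmp:nongp-cm-arr} in the paper (three lines with two parallel, bounded complex a single segment) has $\rank\cM=3$ but $\rank\cB_\cM=2$. The paper introduces the ``full rank'' condition only later, in Section~5, precisely because it is an extra hypothesis, not a consequence of $\cB_\cM\neq\void$. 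Since Lemma~\ref{rank lemma} does not assume full rank, the chain $\rank\cB_{\cM'}=\rank\cM'=\rank\cM-1=\rank\cB_\cM-1$ is unavailable; the classical matroid fact $\rank(\cM/\cbr i)=\rank\cM-1$ says nothing about the ranks of the bounded subposets.

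The actual content of the lemma is supplied by a nontrivial combinatorial theorem of Dong (\cite[Theorem 3.2]{D}), which your argument does not engage: if some covector of $\cB_\cM$ has nonzero $i$-th coordinate, then \emph{every} maximal covector of $\cB_\cM$ has nonzero $i$-th coordinate. Granting this, the conclusion is immediate: $\cB_{\cM'}=\cB_\cM\cap\cL'$ contains none of the maximal elements of $\cB_\cM$, so any chain in $\cB_{\cM'}$ can be strictly extended inside $\cB_\cM$ and hence has length $<\rank\cB_\cM$. Your closing suggestion of ``exhibiting a maximal chain in $\cB_\cM$ through such a $\gl$'' does not substitute for Dong's theorem: without it, one cannot rule out a longest chain in $\cB_\cM$ lying entirely inside $\cL'$, which would make $\rank\cB_{\cM'}=\rank\cB_\cM$ despite the hypothesis.
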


\begin{proof}
  By \cite[Theorem 3.2]{D}, any maximal element $\mu \in \cB_\cM$ satisfies $\mu(i) \ne 0$. 
Hence $\mu \not \in  \cB_{\cM'}$, and the assertion follows. 
\end{proof}

Set $\wS':=\kk[x_j, y_j \mid  j \in \br{n} \setminus \cbr{i}] \subset \wS$.  Then $O_{\cM'}$ is an ideal of $\wS'$. 

\begin{lem}\label{codim 1}
In the above situation, if $\wS/O_\cM$ is Cohen--Macaulay, then $\wS'/O_{\cM'}$ is also. 
\end{lem}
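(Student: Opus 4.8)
The plan is to verify the Cohen--Macaulayness of $\wS'/O_{\cM'}$ with the criterion of Proposition~\ref{ring CM}, applied to the Novik--Postnikov--Sturmfels resolution of $\wS'/O_{\cM'}$ (Theorem~\ref{thm:NPS-cell-res} for the affine oriented matroid $\cM'$), by comparing the cochain complexes of $X:=X(\cB_\cM)$ and of its regular subcomplex $X':=X(\cB_{\cM'})$, whose cells are $\{\gl\in X^{(*)}\mid\gl(i)=0\}$. First I would dispose of two degenerate cases. From $\cB_{\cM'}=\{\gl\in\cB_\cM\mid\gl(i)=0\}$ one reads off $O_{\cM'}=(\sfm_\gl\mid\gl\in\cB_\cM,\ \gl(i)=0)$, so $\wS'/O_{\cM'}=\wS/(O_\cM+(x_i,y_i))$; if $\cB_{\cM'}=\void$ then $O_{\cM'}=0$ and $\wS'/O_{\cM'}$ is a polynomial ring, and if $\cB_\cM\subseteq\cL'$ then $O_\cM$ involves neither $x_i$ nor $y_i$, whence $\wS/O_\cM\cong(\wS'/O_{\cM'})[x_i,y_i]$; in both cases there is nothing to prove. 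So assume $\cB_\cM\not\subseteq\cL'$ and $\cB_{\cM'}\ne\void$; then $g$ is neither a loop nor a coloop of $\cM'$, so Theorem~\ref{thm:NPS-cell-res} and Proposition~\ref{ring CM} do apply to $\cM'$. Write $d:=\dim X$ and $d':=\dim X'$; Lemma~\ref{rank lemma} gives $d'\le d-1$. Note that the $\ZZ^{2n-2}$-grading $\gr'$ of $\cM'$ is the restriction of the $\ZZ^{2n}$-grading $\gr$ of $\cM$ with the coordinates indexed by $x_i$ and $y_i$ deleted.

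Fix $\ba'\in\ZZ^{2n-2}$ and let $\widehat{\ba'},\widehat{\ba'}_+,\widehat{\ba'}_-\in\ZZ^{2n}$ be obtained from $\ba'$ by inserting into the coordinates indexed by $x_i,y_i$ the pairs $(0,0),(1,0),(0,1)$ respectively. Because no $\sfm_\gl$ is divisible by $x_iy_i$, partitioning the cells of $X^{\ge\widehat{\ba'}}$ according to the value $\gl(i)\in\signs$ yields $X^{\ge\widehat{\ba'}}=X'^{\ge\ba'}\sqcup F_+\sqcup F_-$ with $F_\pm:=X^{\ge\widehat{\ba'}_\pm}$; moreover $F_\pm$ are order filters of $X$, there is no incidence between a cell of $F_+$ and one of $F_-$ (comparable cells have comparable $i$-entries), $F_+\cup F_-$ is an order filter of $X$, and $X^{\ge\widehat{\ba'}}\setminus(F_+\cup F_-)=X'^{\ge\ba'}$. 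From this I would extract the short exact sequence of cochain complexes
\[
0\longrightarrow C^\bullet(F_+)\oplus C^\bullet(F_-)\longrightarrow C^\bullet\!\big(X^{\ge\widehat{\ba'}}\big)\longrightarrow C^\bullet\!\big(X'^{\ge\ba'}\big)\longrightarrow 0 ,
\]
the only point needing care here being that the cokernel really is the complex $C^\bullet(X'^{\ge\ba'})$ attached to the resolution of $\wS'/O_{\cM'}$ — which holds because $X'$ is a regular subcomplex of $X$, so incidence numbers and cell dimensions are inherited.

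Since $\wS/O_\cM$ is Cohen--Macaulay, Proposition~\ref{ring CM} gives $H^j(C^\bullet(X^{\ge\bm{c}}))=0$ for every $\bm{c}\in\ZZ^{2n}$ and every $j\ne d$, so the first two terms of the sequence have cohomology only in degree $d$. The long exact cohomology sequence then yields $H^j(C^\bullet(X'^{\ge\ba'}))=0$ for $j\notin\{d-1,d\}$, together with an exact sequence $0\to H^{d-1}(C^\bullet(X'^{\ge\ba'}))\to H^d(C^\bullet(F_+)\oplus C^\bullet(F_-))\xrightarrow{\ \alpha\ }H^d(C^\bullet(X^{\ge\widehat{\ba'}}))\to H^d(C^\bullet(X'^{\ge\ba'}))\to 0$. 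The key observation is that $\alpha$ is surjective: by \cite[Theorem 3.2]{D} every maximal covector $\mu$ of $\cB_\cM$ has $\mu(i)\ne 0$, so every $d$-cell of $X$ lies in $F_+\sqcup F_-$; hence $C^d(F_+)\oplus C^d(F_-)=C^d(X^{\ge\widehat{\ba'}})$ and $C^{d+1}(-)=0$, and since the differentials agree the induced map $\alpha$ on $H^d$ is onto. Therefore $H^d(C^\bullet(X'^{\ge\ba'}))=0$, so $H^j(C^\bullet(X'^{\ge\ba'}))=0$ for all $j\ne d-1$. If $d'=d-1$ this is exactly the vanishing required by Proposition~\ref{ring CM} for $\wS'/O_{\cM'}$; if $d'<d-1$ then $X'$ has no $(d-1)$-cell, so $C^{d-1}(X'^{\ge\ba'})=0$ and a fortiori $H^{d-1}(C^\bullet(X'^{\ge\ba'}))=0$, and Proposition~\ref{ring CM} again gives the conclusion. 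Either way $\wS'/O_{\cM'}$ is Cohen--Macaulay.

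The formal steps — the partition of $X^{\ge\widehat{\ba'}}$, the decomposition $C^\bullet(F_+\cup F_-)=C^\bullet(F_+)\oplus C^\bullet(F_-)$, and the short exact sequence — are routine bookkeeping with the apparatus of Section~\ref{sec:CM-implies-CM}, and the dimension discussion is handled painlessly by Lemma~\ref{rank lemma} together with the vanishing of $C^{d-1}$ when $d'<d-1$. The one genuinely external ingredient, and the step I expect to be the real crux, is Dong's structural result that every maximal bounded covector has a nonzero $i$-entry: it is precisely this that makes $C^d(F_+)\oplus C^d(F_-)$ exhaust $C^d(X^{\ge\widehat{\ba'}})$, forces the surjectivity of $\alpha$, and thereby kills the unwanted top cohomology of $C^\bullet(X'^{\ge\ba'})$.
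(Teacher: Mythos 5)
Your proof is correct and follows essentially the same route as the paper: the same short exact sequence of cochain complexes arising from the decomposition $X^{\ge\widehat{\ba'}}=F_+\sqcup F_-\sqcup X'^{\ge\ba'}$, the same treatment of the degenerate cases, and the same reliance on Lemma~\ref{rank lemma}. Your detour through the surjectivity of $\alpha$ — invoking Dong's \cite[Theorem 3.2]{D}, which is precisely the ingredient already packaged into Lemma~\ref{rank lemma} — to kill $H^d(C^\bullet(X'^{\ge\ba'}))$ is redundant, since $d'=\dim X'<d$ already forces $C^d(X'^{\ge\ba'})=0$; the paper simply observes this dimensional vanishing directly.
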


\begin{proof}
The case where $\cB_{\cM'} = \void$ is clear. Assume $\cB_{\cM'} \neq \void$. Clearly, $g$ is not a coloop also in $\cM'$. So the results in the previous sections hold for $\wS'/O_{\cM'}$. 

If $\cB_\cM \subset \cL'$, then we have $O_\cM= O_{\cM'}\widetilde{S}$ and $\widetilde{S}/O_\cM \cong (\widetilde{S}'/O_{\cM'})[x_i, y_i]$. 
So the assertion is clear. Now we assume that $\cB_\cM \not \subset \cL'$. 
Then we have  $\rank \cB_{\cM'} < \rank \cB_\cM$ by Lemma~\ref{rank lemma}. 

Recall that $\wS$ (resp. $\wS'$) is a $\ZZ^{2n}$-graded  (resp. $\ZZ^{2n-2}$-graded) ring. 
The inclusion  $\wS' \subset \wS$ induces the injection $\ZZ^{2n-2} \hookrightarrow \ZZ^{2n}$, and we regard 
$\ZZ^{2n-2}$ as a subset of $\ZZ^{2n}$ in this way.  
For simplicity, we set $X^{(\ast)} := X(\cB_\cM)$ and $Y^{(\ast)} := X(\cB_{\cM'})$. Then we can regard $Y^{(\ast)}$ as a subcomplex of $X^{(\ast)}$.  
Recall that we defined the order filter $X^{\ge \ba} := \{ \sigma \in X^{(\ast)} \mid \gr(\sigma) \ge \ba \}$ of $X^{(\ast)}$ from $\ba \in \ZZ^{2n}$.  

For $\bb \in \ZZ^{2n-2}$, the order filter $Y^{\ge \bb}$ of $Y^{(\ast)}$ is defined in a similar way. 
Then we can regard $Y^{\ge \bb} \subseteq X^{\ge \bb}$ through the inclusion $\ZZ^{2n-2} \hookrightarrow \ZZ^{2n}$.  Set $\bb+i:= \bb + \deg(x_i) \in \ZZ^{2n}$ and  $\bb-i:= \bb + \deg(y_i) \in \ZZ^{2n}$. 
Then we have 
$$X^{\ge \bb}=X^{\ge \bb+i} \sqcup X^{\ge \bb-i} \sqcup Y^{\ge \bb},$$ 
and it yields an exact sequence 
\begin{equation}\label{B_M>gl 's}
0 \longto C^\bullet(X^{\ge \bb+i}) \oplus C^\bullet(X^{\ge \bb- i}) \longto  C^\bullet(X^{\ge \bb}) \longto  C^\bullet(Y^{\ge \bb}) \longto 0
\end{equation}
of cochain complexes. 

Since $\widetilde{S}/O_\cM$ is Cohen--Macaulay, we have 
$$H^j( C^\bullet(X^{\ge \bb+i}))=H^j(C^\bullet(X^{\ge \bb-i})) = H^j( C^\bullet(X^{\ge \bb})) =0$$
for  all $j \ne \dim X = \rank \cB_\cM$ by Proposition~\ref{ring CM}. 
From the sequence \eqref{B_M>gl 's}, we have  $H^j( C^\bullet(Y^{\ge \bb})) =0$ for all $j < \rank \cB_{\cM} - 1$. 
By the present assumption $\rank \cB_{\cM'} < \rank \cB_\cM$,  it means that  $H^j( C^\bullet(Y^{\ge \bb})) =0$ for all $j < \rank \cB_{\cM'}$. 
Since clearly $H^j(C^\bullet(Y^{\ge \bb})) = 0$ for all $j > \rank \cB_{\cM'}$, the assertion follows from Proposition~\ref{ring CM}. 
\end{proof}

For $A \subseteq \br n$, we set $S_A := \kk[x_i \mid i \in A]$ and $\widetilde S_A := S_A \otimes_\kk \kk[y_i \mid i \in A]$. Recall that an ordinary matroid $M$ on a finite set $V$ is characterized by the independent sets, which form a simplicial complex $\gD$ over $V$ whose facets are just the bases of $M$ (See \cite{St} for details). In the sequel, we refer $(V,\gD)$ or simply $\gD$ to a matroid. 

\begin{thm} \label{thm:CM-implies-matroid} 
If $\widetilde{S}/O_\cM$ is Cohen--Macaulay, then $S/\overline{O_\cM}$ is the Stanley--Reisner ring of a matroid. 
\end{thm}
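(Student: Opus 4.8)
The plan is to reduce the statement to a classical combinatorial characterization of matroid complexes. Let $\gD$ be the simplicial complex on $\br n$ with $\kk[\gD] = S/\overline{O_\cM}$; since $\overline{O_\cM} = (x_{F\setminus\cbr g} \mid F\in\underline\cC,\ g\in F)$, the minimal nonfaces of $\gD$ are the inclusion-minimal members of $\set{F\setminus\cbr g}{F\in\underline\cC,\ g\in F}$, and the goal is to show that $\gD$ is the independence complex of a matroid on $\br n$. I will invoke the classical fact that a finite simplicial complex $\gD$ on a vertex set $V$ is a matroid complex if and only if the induced subcomplex $\gD|_W := \set{F\in\gD}{F\subseteq W}$ is pure for every $W\subseteq V$. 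Since every Cohen--Macaulay complex is pure, it therefore suffices to prove that $\kk[\gD|_W]$ is Cohen--Macaulay for all $W\subseteq\br n$.

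To this end I would establish a dictionary between induced subcomplexes of $\gD$ and contractions of $\cM$. Deleting a vertex $i$ produces the complex $\gD|_{\br n\setminus\cbr i}$ whose minimal nonfaces are the minimal members of $\set{F\setminus\cbr g}{F\in\underline\cC,\ g\in F,\ i\notin F}$; on the other hand, the contraction $\cM' := \cM/\cbr i$ of Lemma~\ref{codim 1} satisfies $\underline{\cM'} = \underline\cM/\cbr i$, so its cocircuit supports are exactly the sets $C\in\underline\cC$ with $i\notin C$, and therefore $\overline{O_{\cM'}}$ is generated precisely by those monomials. Hence $\kk[\gD|_{\br n\setminus\cbr i}] = S_{\br n\setminus\cbr i}/\overline{O_{\cM'}}$. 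Since iterated contraction is well defined and independent of the order of the contracted elements, an easy induction gives $\kk[\gD|_W] = S_W/\overline{O_{\cM/(\br n\setminus W)}}$ for every $W\subseteq\br n$, where $\cM/(\br n\setminus W)$ denotes the iterated contraction. Now, starting from the hypothesis that $\wS/O_\cM$ is Cohen--Macaulay, repeated application of Lemma~\ref{codim 1} shows that $\wS_W/O_{\cM/(\br n\setminus W)}$ is Cohen--Macaulay for every $W$ (the degenerate cases, e.g. when $g$ becomes a coloop and the bounded complex is $\void$, are handled just as in the proof of Lemma~\ref{codim 1}); then Corollary~\ref{cor:redCM}, applied to each iterated contraction, yields that $S_W/\overline{O_{\cM/(\br n\setminus W)}} = \kk[\gD|_W]$ is Cohen--Macaulay, hence pure. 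By the characterization quoted above, $\gD$ is a matroid complex, i.e., $S/\overline{O_\cM} = \kk[\gD]$ is the Stanley--Reisner ring of a matroid.

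The conceptual heart of the argument is packaged into the classical characterization of matroid complexes, so I do not expect a genuinely hard step; the main obstacle is careful bookkeeping. The points needing the most attention are the compatibility $\underline{\cM/\cbr i} = \underline\cM/\cbr i$ (so that the cocircuit supports of $\cM/\cbr i$ are the cocircuit supports of $\cM$ avoiding $i$) together with the resulting identity $\overline{O_{\cM/\cbr i}} = (x_{F\setminus\cbr g}\mid F\in\underline\cC,\ g\in F,\ i\notin F)$, and the verification that loops, coloops, and the cases $\cB_{\cM/(\br n\setminus W)} = \void$ do not disrupt the induction --- precisely the situations already isolated in the proofs of Lemma~\ref{codim 1} and Corollary~\ref{cor:redCM}.
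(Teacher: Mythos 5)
Your proof is correct and follows essentially the same route as the paper's: both argue that the induced subcomplexes $\gD|_W$ are Cohen--Macaulay by identifying $\kk[\gD|_W]$ with the specialization $S_W/\overline{O_{\cM/(\br n\setminus W)}}$ of the iterated contraction, propagating Cohen--Macaulayness along Lemma~\ref{codim 1} and then descending via the regular sequence of Corollary~\ref{cor:redCM}. The only cosmetic difference is the classical criterion invoked (the paper cites Stanley's characterization that $\gD$ is a matroid iff every $\gD|_W$ is Cohen--Macaulay, whereas you use purity of every restriction and then note CM $\Rightarrow$ pure); since both arguments establish Cohen--Macaulayness of each $\gD|_W$ anyway, this makes no difference in substance.
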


\begin{proof}
Since $\overline{O_\cM}$ is a squarefree monomial ideal of $S$, there is a simplicial complex $\Delta \subset 2^{[n]}$ whose Stanley--Reisner ideal $I_\Delta$ coincides with  $\overline{O_\cM}$. 
Recall that $\Delta$ is a matroid if and only if $\Delta|_F$ is Cohen--Macaulay for all $F \subset [n]$ (\cite[Proposition 3.1 in Chap. III]{St}).

Set  $$\cL(F \cup \{g \}):= \{ \gl |_{F \cup \{g \}} \mid \gl \in \cL, \, \supp(\gl) \subseteq F \cup \{g \} \}.$$ 
This is the set of covectors of the contraction  $\cM''$ of $\cM$ to $F \cup \cbr{g}$ ($\cM''$ is also an affine oriented matroid).    
It is easy to see that the Stanley--Reisner ring  $S_F/I_{\Delta|_F}$ of $\Delta|_F$ coincides with 
$$\wS_F/O_{\cM''} \otimes_{\wS_F}\wS_F/(x_i -y_i \mid i \in F).$$
Since $\{ x_i -y_i \mid i \in F \}$ forms an $\wS_F/O_{\cM''}$-regular sequence by Corollary~\ref{cor:redCM}, it suffices to show that $\wS_F/O_{\cM''}$ is Cohen--Macaulay. 
We can prove this by the induction on $n - |F|$ using Lemma~\ref{codim 1}, which corresponds to the case $n-|F|=1$.  
\end{proof}

By Theorem~\ref{thm:CM-implies-matroid}, we see that the Cohen--Macaulayness of $\wS/O_\cM$ does not depend on the base field $\kk$. 

As Example~\ref{exmp:nongp-cm-arr} shows, the converse of Theorem~\ref{thm:gen-pos} does not hold in general. In the rest, we will prove that the converse is ``essentially'' true and also give a combinatorial characterization of Cohen--Macaulayness of $\widetilde S/O_\cM$.

Before that, let us recall some properties of ordinary matroids for later use.

\begin{lem}[cf.\ {\cite[Cor. 1.2.6 and Exer. 4 in Sect. 2 of Chap. 1]{O}}] \label{lem:base-circ}
Let $M := (\br n,\gD)$ be an ordinary matroid with the base set $\cF(\gD)$ and the circuit set $\cC(M)$.
\begin{enumerate}
\item For any base $F \in \cF(\gD)$ and $i \in \br n \setminus F$, there exists a unique circuit $C(i,F) \in \cC(M)$ such that $C(i,F) \subseteq F \cup \cbr{i}$.
\item For any circuit $C \in \cC(M)$ and $i \in C$, there exists a base $F \in \cF(\gD)$ such that $C = C(i,F)$.
\end{enumerate}
\end{lem}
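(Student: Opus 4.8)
The plan is to derive both assertions from the basic axioms of an ordinary matroid: every subset of an independent set (equivalently, of a base $F\in\cF(\gD)$) is independent; all bases have the same cardinality, which I will call $r=\rank M$; every independent set is contained in some base; and the (weak) circuit elimination axiom, that for distinct circuits $C_1,C_2\in\cC(M)$ and $e\in C_1\cap C_2$ there is a circuit $C_3\in\cC(M)$ with $C_3\subseteq (C_1\cup C_2)\setminus\{e\}$.

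For part (1), fix a base $F$ and $i\in\br n\setminus F$. Since $F\cup\{i\}$ has cardinality $r+1$, it is dependent and hence contains at least one circuit. Any circuit $C\subseteq F\cup\{i\}$ must satisfy $i\in C$, for otherwise $C\subseteq F$ would be a dependent subset of the independent set $F$. To prove uniqueness, suppose $C_1\ne C_2$ are both circuits contained in $F\cup\{i\}$. By the previous remark $i\in C_1\cap C_2$, and since distinct circuits are incomparable under inclusion we may pick some element of $C_1\setminus C_2$; applying circuit elimination at $e=i$ yields a circuit $C_3\subseteq (C_1\cup C_2)\setminus\{i\}\subseteq F$, contradicting the independence of $F$. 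Hence the fundamental circuit $C(i,F)$ is well defined.

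For part (2), let $C\in\cC(M)$ and $i\in C$. By minimality of $C$ the set $C\setminus\{i\}$ is independent, so it extends to a base $F$. If $i$ lay in $F$, then $C=(C\setminus\{i\})\cup\{i\}\subseteq F$ would be a dependent subset of an independent set, which is impossible; thus $i\notin F$ and $C\subseteq F\cup\{i\}$. By part (1), the unique circuit contained in $F\cup\{i\}$ is $C(i,F)$, so $C=C(i,F)$, as claimed.

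I do not anticipate a genuine obstacle here: the argument is entirely formal once the matroid axioms are available, which is why the statement is quoted from \cite{O} rather than proved in the body. The only points that need a moment's care are verifying that the extension of $C\setminus\{i\}$ to a base cannot reintroduce $i$ (settled above by a one-line dependence argument) and invoking circuit elimination with $i$ as the distinguished common element; both are routine.
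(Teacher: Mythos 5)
The paper does not give a proof of this lemma; it simply cites it from Oxley's \emph{Matroid Theory} (Cor.\ 1.2.6 and the referenced exercise), so there is no in-paper argument to compare against. Your proof is the standard textbook derivation of fundamental circuits and is correct: part (1) uses that a base plus one element is dependent and circuit elimination at the common element $i$ for uniqueness, and part (2) extends the independent set $C\setminus\{i\}$ to a base and invokes part (1), with the one-line check that $i$ cannot land in the base being exactly the point that needs stating.
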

Though the following property of circuits of ordinary matroids is probably well-known for specialists in matroid theory, we will give a proof for completeness.

\begin{cor}\label{cor:eq-circ}
Let $M = (\br n, \gD)$, $M' := (\br n, \gD')$ be ordinary matroids and $\cC(M)$, $\cC(M')$ the sets of circuits of $M$, $M'$, respectively. If $\rank M = \rank M'$ and $\cC(M') \subseteq \cC(M)$, then $M = M'$.
\end{cor}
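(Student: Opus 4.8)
The plan is to show that two matroids on the same ground set with the same rank and one circuit set contained in the other must be equal, by comparing their independent sets (equivalently, their bases). The key point is that a matroid is completely determined by its circuit set, so it suffices to prove $\cC(M) \subseteq \cC(M')$; together with the hypothesis $\cC(M') \subseteq \cC(M)$ this gives $\cC(M) = \cC(M')$ and hence $M = M'$.

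First I would recall that a subset of $\br n$ is independent in a matroid precisely when it contains no circuit. From $\cC(M') \subseteq \cC(M)$ it follows immediately that every independent set of $M$ is independent in $M'$, i.e.\ $\gD \subseteq \gD'$. In particular every base of $M$ is an independent set of $M'$, so it extends to a base of $M'$; since $\rank M = \rank M'$, the bases of $M$ are exactly the bases of $M'$ that happen to have size $\rank M$, but more is true: a base of $M$ has size $\rank M = \rank M'$ and is independent in $M'$, hence is a base of $M'$. Thus $\cF(\gD) \subseteq \cF(\gD')$, and by the same size/rank argument applied in the other direction (any base of $M'$ is independent in $M'$ of size $\rank M'= \rank M$; I still need it to be independent in $M$), I reduce to showing $\gD' \subseteq \gD$, equivalently $\cC(M) \subseteq \cC(M')$.

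To get $\cC(M) \subseteq \cC(M')$, let $C \in \cC(M)$ and pick any $i \in C$. By Lemma~\ref{lem:base-circ}(2) there is a base $F \in \cF(\gD)$ with $C = C(i,F)$, the unique circuit of $M$ contained in $F \cup \cbr i$. Since $\cF(\gD) \subseteq \cF(\gD')$, $F$ is also a base of $M'$, so by Lemma~\ref{lem:base-circ}(1) there is a unique circuit $C(i,F)' \in \cC(M')$ with $C(i,F)' \subseteq F \cup \cbr i$. Now $C(i,F)' \in \cC(M') \subseteq \cC(M)$ is a circuit of $M$ contained in $F \cup \cbr i$, so by the uniqueness part of Lemma~\ref{lem:base-circ}(1) for $M$ we get $C(i,F)' = C(i,F) = C$. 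Hence $C = C(i,F)' \in \cC(M')$, proving $\cC(M) \subseteq \cC(M')$.

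Combining, $\cC(M) = \cC(M')$, and since a matroid is determined by its circuits, $M = M'$. The only slightly delicate point — the ``main obstacle'' — is being careful about which direction of containment of bases/independent sets comes for free and which one needs the circuit-exchange argument; once one fixes that $\cC(M')\subseteq\cC(M)$ gives $\gD\subseteq\gD'$ cheaply and then uses Lemma~\ref{lem:base-circ} to push circuits of $M$ into $M'$, everything closes up. No deeper input is needed beyond Lemma~\ref{lem:base-circ} and the basic fact that independent sets are the circuit-free sets.
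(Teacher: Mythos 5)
Your argument is correct and is essentially the paper's own: both proofs reduce to $\cC(M)\subseteq\cC(M')$, pick $C\in\cC(M)$ and $i\in C$, use Lemma~\ref{lem:base-circ}(2) to write $C=C(i,F)$ for a base $F$ of $M$, produce a circuit $C'$ of $M'$ inside $F\cup\cbr{i}$, and then invoke the uniqueness in Lemma~\ref{lem:base-circ}(1) applied to $M$ to force $C'=C$. The one small divergence is how the circuit $C'$ of $M'$ is obtained: the paper notes directly that $\#(F\cup\cbr{i})=\rank M'+1$, so $F\cup\cbr{i}$ is dependent in $M'$ and hence contains a circuit, whereas you first derive $\gD\subseteq\gD'$ and $\cF(\gD)\subseteq\cF(\gD')$ and then apply Lemma~\ref{lem:base-circ}(1) to $M'$ — a slightly longer route to the same intermediate fact, but entirely valid.
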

\begin{proof}
It suffices to show that $\cC(M) \subseteq \cC(M')$. Take any $C \in \cC(M)$ and $i \in C$. By Lemma~\ref{lem:base-circ}, there exists a base $F \in \cF(\gD)$ such that $C(i,F) = C$. It follows from the definition of $C(i,F)$ that $i \not\in F$ and $C(i,F) \subseteq F \cup \cbr{i}$. Since $\# F < \#(F \cup \cbr i)$ and $\rank M = \rank M'$, the set $F \cup \cbr i$ does not belong to $\gD'$. Hence there exists a circuit $C' \in \cC(M')$ such that $C' \subseteq F \cup \cbr{i}$. Now $C' \in \cC(M)$ since $\cC(M') \subseteq \cC(M)$. Consequently, it follows from the uniqueness of $C = C(i,F)$ that $C = C'$, and hence $C \in \cC(M')$.
\end{proof}

Let $\cM := (\br n \cup \cbr g, \cL, g)$ be an affine oriented matroid with the cocircuit set $\cC$. For simplicity, we say $\cM$ is of \emph{full rank} if for any $e \in \br n \cup \cbr g$ that is not a loop, there exists $\gl \in \cB_\cM$ with $e \in \supp(\gl)$. This is equivalent to say that the rank of $\cB_\cM$ is equal to $\rank \cM - 1$.

Recall that
\[
\cC|_{\br n} = \Min\set{\gl|_{\br n}}{\gl \in \cC},
\]
where $\Min$ denotes the set of inclusion-minimal elements.
In the sequel, we set
\[
\underline{(\cC \cap \cL^+)|_{\br n}} := \set{\supp(\gl) \cap \br n}{\gl \in \cC \cap \cL^+}. 
\]

\begin{thm} \label{thm:char-genpos}
Let $\cM := (\br n \cup \cbr g, \cL, g)$ be an affine oriented matroid of rank $r$ with the cocircuit set $\cC$, and let $\cB_\cM$ be the bounded complex of $\cM$. Assume $g$ is not a coloop and $\cM$ is of full rank. Then the following conditions are equivalent:
\begin{enumerate}
\item $g$ is in general position.
\item $\widetilde S/O_\cM$ is Cohen--Macaulay.
\item $S/\overline{O_{\cM}}$ is Cohen--Macaulay.
\item $\underline{(\cC \cap \cL^+)|_{\br n}}$ is the set of circuits of some ordinary matroid of rank $n - r$.
\item $\underline{(\cC \cap \cL^+)|_{\br n}} = \underline{\cC|_{\br n}}$.
\end{enumerate}
If these are the cases, $\dim \widetilde S/O_\cM = 2n - r$ and $\dim S/\overline{O_\cM} = n - r$.
\end{thm}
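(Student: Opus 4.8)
The plan is to prove the cycle of implications $(1) \Rightarrow (2) \Rightarrow (3) \Rightarrow (4) \Rightarrow (5) \Rightarrow (1)$, exploiting the machinery already assembled. The implication $(1) \Rightarrow (2)$ is just Theorem~\ref{thm:gen-pos}, and $(2) \Leftrightarrow (3)$ is Corollary~\ref{cor:redCM}; moreover Theorem~\ref{thm:CM-implies-matroid} tells us that under $(3)$ the ideal $\overline{O_\cM}$ is the Stanley--Reisner ideal of a matroid complex $\gD$. For $(3) \Rightarrow (4)$ I would note that the circuits of that matroid $\gD$ are exactly the minimal generators of $\overline{O_\cM}$, i.e.\ the minimal elements of $\set{\supp(\gl)\cap\br n}{\gl\in\cC\cap\cL^+}$; the point to check is that this family is already an antichain (no containments), so that ``minimal elements'' can be dropped and the circuit set is literally $\underline{(\cC\cap\cL^+)|_{\br n}}$, and that the rank of $\gD$ is $n-r$. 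The rank count should come from $\dim S/\overline{O_\cM} = n - r$: since $S/\overline{O_\cM}$ is Cohen--Macaulay its Krull dimension equals the dimension of $\gD$ plus one, and $\dim \widetilde S/O_\cM = 2n-r$ by Theorem~\ref{thm:gen-pos} together with the regular sequence of Corollary~\ref{cor:redCM} of length $n$; hence $\dim \gD = n-r-1$, so $\rank\gD = n-r$. The antichain property of $\underline{(\cC\cap\cL^+)|_{\br n}}$ should follow from cocircuit (covector) axioms for $\cM$ restricted to the positive side, analogous to the argument in the proof of Corollary~\ref{cor:redCM} using \cite[Theorem 3.2.5]{BVSWZ}: if two such supports were comparable one could produce a smaller cocircuit in $\cL^+$, contradicting minimality in $\cC$.

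For $(4) \Rightarrow (5)$: always $\underline{(\cC\cap\cL^+)|_{\br n}} \supseteq$ the minimal sets giving $\underline{\cC|_{\br n}}$ in one direction? Actually the containment to establish is that $\underline{(\cC\cap\cL^+)|_{\br n}}$, viewed as a circuit family, and $\underline{\cC|_{\br n}}$, the circuit family of the underlying matroid $\underline{\cM|_{\br n}}$ (this is a matroid of rank $r$ on $\br n$ — wait, the cocircuits of $\cM$ restricted give a matroid whose dual has rank $n-r$), agree. The natural tool is Corollary~\ref{cor:eq-circ}: one knows $\cC|_{\br n} \supseteq (\cC\cap\cL^+)|_{\br n}$ as sign vectors up to sign, hence as underlying sets $\underline{\cC|_{\br n}} \supseteq \underline{(\cC\cap\cL^+)|_{\br n}}$ after taking minimal elements; if both are circuit sets of matroids of the same rank $n-r$, Corollary~\ref{cor:eq-circ} forces equality. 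So I would: (i) verify $\underline{\cC|_{\br n}}$ is the circuit set of a rank-$(n-r)$ matroid — this is the underlying matroid of $\cM|_{\br n}$, using $\rank\cM = \rank\cM|_{\br n} = r$ and the full-rank hypothesis; (ii) verify the set-theoretic containment $\underline{(\cC\cap\cL^+)|_{\br n}} \subseteq \underline{\cC|_{\br n}}$ at the level of minimal sets (each positive cocircuit restricts to a cocircuit, and minimality is inherited using the antichain property from the previous step); (iii) apply Corollary~\ref{cor:eq-circ}.

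For the crucial $(5) \Rightarrow (1)$: unwinding the definition, $g$ being in general position means $\cC|_{E'} \subseteq \set{\gl|_{E'}}{\gl\in\cC,\ g\in\supp\gl}$ with $E' = \br n$. Using axiom (L1) and $g$ not being a coloop, every cocircuit $\gl$ with $g \notin \supp\gl$ can be replaced, via composition with a cocircuit through $g$ of the right sign, by one with $g$ in its support and positive value — this is the standard trick relating $\cC|_{\br n}$ to $(\cC\cap\cL^+)|_{\br n}$. Condition $(5)$ says precisely that every minimal restricted support already arises from a positive cocircuit, which is exactly the set-theoretic content of general position once one checks that the sign data can be matched. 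I expect this last implication to be the main obstacle, because ``$g$ in general position'' is a statement about restricted \emph{sign vectors} while $(5)$ is purely about \emph{supports}; bridging that gap requires the cocircuit elimination/composition axioms to promote a support-level equality to a sign-vector-level inclusion, and one must be careful that the full-rank and non-coloop hypotheses are used exactly where needed. The dimension formulas at the end then follow as already indicated: $\dim\widetilde S/O_\cM = 2n-r$ from Theorem~\ref{thm:gen-pos}, and $\dim S/\overline{O_\cM} = n-r$ by subtracting the length-$n$ regular sequence of Corollary~\ref{cor:redCM}.
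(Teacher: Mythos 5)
Your overall outline reproduces the paper's cycle $(1)\Rightarrow(2)\Rightarrow(3)\Rightarrow(4)\Rightarrow(5)\Rightarrow(1)$, and the implications you resolve by citation (Theorem~\ref{thm:gen-pos}, Corollary~\ref{cor:redCM}, Theorem~\ref{thm:CM-implies-matroid}, Corollary~\ref{cor:eq-circ}) are exactly the ones the paper uses; the step $(4)\Rightarrow(5)$ is argued the same way. However, two pieces of your proposal have genuine gaps.

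The rank $n-r$ in $(3)\Rightarrow(4)$ is argued circularly. You derive $\dim\widetilde S/O_\cM=2n-r$ ``by Theorem~\ref{thm:gen-pos},'' but that theorem assumes $(1)$, which is exactly what the cycle is supposed to deliver at the end; it cannot be invoked inside $(3)\Rightarrow(4)$. The correct source is the NPS resolution itself: it is a minimal free resolution supported on $X(\cB_\cM)$, whose dimension equals $\rank\cB_\cM=r-1$ under the full-rank hypothesis, so $\pd_{\widetilde S}\widetilde S/O_\cM=r$, and Auslander--Buchsbaum with $(2)$ gives $\dim\widetilde S/O_\cM=2n-r$ independently of general position. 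Your subsequent descent to $\dim S/\overline{O_\cM}=n-r$ via the length-$n$ regular sequence of Corollary~\ref{cor:redCM} is fine. (Separately, you do not need elimination axioms for the antichain property: $\underline\cC$ is already an antichain because cocircuits are support-minimal, and deleting $g$ from each member of the subfamily of sets all containing $g$ preserves antichain-ness.)

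More seriously, $(5)\Rightarrow(1)$ is not actually proved. You correctly flag the tension between $(5)$ being a statement about supports and general position being a statement about sign vectors, but you only sketch a plan via composition/elimination with cocircuits through $g$ and concede it is the main obstacle. The paper closes the gap much more cheaply. Take $\gm\in\cC$ with $\gm|_{\br n}\in\cC|_{\br n}$. By $(5)$ there is $\gl\in\cC\cap\cL^+$ with $\supp(\gl)\cap\br n=\supp(\gm)\cap\br n$, hence $\supp(\gm)\subseteq\supp(\gl)$ (the only possible extra element on the right is $g$). Since both $\gm$ and $\gl$ are support-minimal nonzero covectors, $\supp(\gm)=\supp(\gl)$, so $g\in\supp(\gm)$; by the circuit axioms $\gm=\pm\gl$, and $\gm$ is a cocircuit through $g$ restricting to $\gm|_{\br n}$, which is exactly what general position demands. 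No composition or elimination is required, only support-minimality of cocircuits; the tool you reached for would not have closed the argument as directly.
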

\begin{proof}
The implication (1) $\imply$ (2) is just a part of Theorem~\ref{thm:gen-pos}. The equivalence (2) $\Leftrightarrow$ (3) is the same as Corollary~\ref{cor:redCM}. The implication (2) $\imply$ (4) follows from Theorem~\ref{thm:CM-implies-matroid}.

Let us prove (4) $\imply$ (5). Since $\cM$ is of rank $r$ and $g$ is not a coloop, the rank of $\cM|_{\br n}$ is equal to $r$, which implies that the underlying matroid $\underline{\cM|_{\br n}}$ of $\cM|_{\br n}$ (see Section~\ref{sec:prelim}) is of rank $r$.
The dual matroid of $\underline{\cM|_{\br n}}$ is thus of rank $n-r$ and has the circuit set $\underline{\cC|_{\br n}}$. Now $\underline{(\cC \cap \cL^+)|_{\br n}} \subseteq \underline{\cC|_{\br n}}$ since $(\cC \cap \cL^+)|_{\br n} \subseteq \cC|_{\br n}$. Therefore the desired equality follows from Corollary~\ref{cor:eq-circ}.

We will prove (5) $\imply$ (1). Assume the assertion (5) holds.
Take any $\gm \in \cC$ with $\gm|_{\br n} \in \cC|_{\br n}$. It suffices to prove $g \in \supp(\gm)$.
By the assumption, there exists $\gl \in \cC \cap \cL^+$ such that $\supp(\gl|_{\br n}) = \supp(\gm|_{\br n})$. It then follows that $\supp(\gm) \subseteq \supp(\gl)$, and hence $\gm = \gl$ or $\gm = -\gl$ by Circuit Axioms. Therefore $g \in \supp(\gm)$.
\end{proof}

\begin{rem} \label{rem:red-to-full-rank-case}
(1) Theorem~\ref{thm:char-genpos} is generalized for an affine oriented matroid $\cM := (\br n \cup \cbr g, \cL, g)$ that is not necessarily of full rank. Set
\[
F := \set{e \in \br n \cup \cbr g}{\gl(e) \neq 0 \text{ for some } \gl \in \cB_\cM}
\]
and $F' := F \setminus \cbr{g}$. It then follows that
\begin{equation}
\widetilde S/O_\cM \cong \widetilde S_{F'}/O_{\cM|_{F}} \otimes_\kk \kk[x_i,y_i \mid i \in \br n \setminus F'] \label{eq:poly-ext}
\end{equation}
Hence the theorem still holds true after a suitable modification.

(2) For positive integers $n,l$ with $n \ge l$, let $I_{n,l} \subset S=\kk[x_1, \ldots, x_n]$ be the ideal 
$$(x_F \mid F \subset \br{n}, \#F =l).$$ 
$I_{n,l}$ is given by the specialization of the ideal 
$O_\cM \subset \wS$ associated with a uniform oriented matroid $\cM$. 
Hence a minimal $S$-free resolution of $S/I_{n,l}$ is supported by the bounded complex $\cB_\cM$ of $\cM$, 
whose underlying space is homeomorphic to a ball as shown by Dong \cite{D08}. 
For example, a minimal free resolution of $I_{4,2}$ is supported by the shaded part of Figure~\ref{4lines}. 
Since $I_{n,l}$ is a Cohen--Macaulay squarefree strongly stable monomial ideal, it also follows from  \cite[Corollary~6.1]{OY15} that a free resolution is supported by a ball. We have no idea on the relation between these two constructions. 
\end{rem}

\section{Canonical module of $\wS/O_\cM$ and the topology of $X(\cB_\cM)$}
Let $\cM= (E, \cL, g)$ with $E=\br{n} \cup \cbr{g}$ be an affine oriented matroid as above. For a simple exposition, in the rest of the paper, we assume that $\cB_\cM$ is of full rank. The general case follows from the same argument as Remark~\ref{rem:red-to-full-rank-case}.
We leave the details to the reader as easy exercises.

Since $\cB_\cM$ is of full rank and is moreover pure as is shown in \cite{D}, a maximal element of $\cB_{\cM}$ is also maximal in $\cL$. 
According to Chapter 4 of \cite{BVSWZ}, we call a maximal element of $\cL$ (resp.  $\cB_\cM$) a {\it tope} of  $\cL$ (resp. $\cB_\cM$). 
If $\gl \in \cL$ is covered by a tope, we call $\gl$ a {\it subtope}.   
Any subtope of $\cL$ is contained in exactly two topes of $\cL$. 

Recall that $\cB_\cM$ gives a regular CW complex $X^{(\ast)}:= X(\cB_\cM)$. We can define the boundary $\partial X^{(\ast)}$ 
of $X^{(\ast)}$ in the natural way, and we identify $\partial X^{(\ast)}$ with the corresponding subset of $\cB_\cM$ by abuse of notation. Here $\gl \in \cB_\cM$ (more precisely, the corresponding cell in $X^{(\ast)}$) belongs to  $\partial X^{(\ast)}$  if and only if there is some $\gm \in  \cL \setminus \cB_\cM$ with $\gm > \gl$.  Clearly, $\partial X^{(\ast)}$ is also a regular CW complex, and its underlying space is a closed subset of that of $X^{(\ast)}$. 
Any subtope of $\cB_\cM$ is contained in at most two topes of $\cB_\cM$. A subtope of $\cB_\cM$ belongs to the boundary if and only if it is contained in a sole tope of $\cB_\cM$. 

Next, under the assumption that $\wS/O_\cM$ is Cohen--Macaulay, 
we will give an explicit description of the canonical module $\omega_{\wS/O_\cM}$ of  $\wS/O_\cM$. 
For $\gl \in \cL$, set $$\sn_\gl:=\frac{\prod_{i=1}^nx_iy_i}{\sfm_\gl} \in \wS.$$

\begin{thm}\label{canonical}
If  $\wS/O_\cM$ is Cohen--Macaulay, its canonical module  $\omega_{\wS/O_\cM}$ is isomorphic to the ideal 
$$J_\cM:=( \sn_\gl \mid \text{$\gl$ is a tope of $\cB_\cM$})$$
of  $\wS/O_\cM$. 
\end{thm}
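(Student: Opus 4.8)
The plan is to identify the canonical module of $\wS/O_\cM$ by using the faithful cellular resolution $\cF_\bullet^{X}$ with $X = X(\cB_\cM)$ furnished by Theorem~\ref{thm:NPS-cell-res}. Since $\wS/O_\cM$ is Cohen--Macaulay of dimension $2n-r$ and projective dimension $r = \dim X + 1$ (with $d := \dim X = \rank \cB_\cM$), the canonical module is computed as $\omega_{\wS/O_\cM} \cong \Ext^{r}_{\wS}(\wS/O_\cM, \wS(-\bm 1))$, where $\bm 1 = \deg(x_1\cdots x_n y_1 \cdots y_n) = (1,\dots,1) \in \NN^{2n}$ is the shift making $\omega_{\wS} = \wS(-\bm 1)$. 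First I would write down the top cohomology of the dualized complex: by the proof of Proposition~\ref{ring CM}, $[\Ext^r_{\wS}(\wS/O_\cM,\wS)]_{-\ba} \cong H^{r-1}(C^\bullet(X^{\ge \ba})) = H^{d}(C^\bullet(X^{\ge \ba}))$, i.e., the top cohomology of the cochain complex of the order filter $X^{\ge \ba}$. Because the resolution is faithful, $X^{\ge \ba}$ ranges exactly over the filters $X^{\ge \sigma}$ for $\sigma \in X^{(*)}$ as $\ba$ varies, and by Proposition~\ref{prop:link-iso} this cohomology is $H^{d}(X, X \setminus \cbr x; \kk)$ for $x \in \sigma$ (or $\widetilde H^d(X;\kk)$ if $\sigma = \void$).

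The key computation is then: in which degrees $\ba$ is $H^d(C^\bullet(X^{\ge\ba}))$ nonzero, and what is its dimension there. Since $X = X(\cB_\cM)$ is Cohen--Macaulay (Corollary~\ref{cor: ring CM => B_M CM}) of dimension $d$, and — as the paper is about to prove (Corollary~\ref{cor:X-hmfd}) — it is in fact a homology manifold with boundary, the local cohomology $H^d(X, X\setminus\cbr x;\kk)$ is $\kk$ when $x$ lies in the interior of $X$ and $0$ when $x \in \partial X$; and $\widetilde H^d(X;\kk) = 0$ since $X$ is contractible. Translating through $\gr$: for a $d$-cell $\sigma$ (a tope of $\cB_\cM$), a point $x$ in its relative interior is an interior point of $X$ precisely when $\sigma$ is a tope — and every $d$-cell is interior in the sense that $H^d(X,X\setminus x;\kk)=\kk$, since removing an interior point of a top-dimensional cell of a $d$-manifold-with-boundary always gives $\kk$ in degree $d$. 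For $\sigma$ of dimension $< d$, $X^{\ge\sigma}$ has no $d$-cells unless $\sigma$ lies in the closure of some tope, but then one checks $H^d$ vanishes unless $\sigma$ is itself a tope. Hence the $\ZZ^{2n}$-graded pieces of $\omega_{\wS/O_\cM}$ are supported exactly on degrees $\ba$ with $X^{\ge\ba} = X^{\ge\sigma}$ for $\sigma$ a tope, and each such piece is one-dimensional. The generator in that strand sits in degree $-\gr(\sigma) + \bm 1 = \bm 1 - \deg(\sfm_\gl) = \deg(\sn_\gl)$ after the canonical shift, which identifies $\omega_{\wS/O_\cM}$ with the submodule of the graded injective hull generated by the classes in degrees $\deg(\sn_\gl)$, $\gl$ a tope of $\cB_\cM$.

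To finish, I would realize this submodule concretely as the ideal $J_\cM = (\sn_\gl \mid \gl \text{ a tope of } \cB_\cM)$ of $\wS/O_\cM$. For this it is cleanest to argue via the Alexander-duality / Stanley--Reisner description: $O_\cM$ is squarefree, so $\wS/O_\cM = \kk[\gD_\cM]$ for the simplicial complex $\gD_\cM$, and the canonical module of a Cohen--Macaulay Stanley--Reisner ring has a standard combinatorial presentation (Stanley) in terms of the interior faces, or equivalently as an ideal generated by the squarefree monomials dual to the facets. One then checks that the facets of $\gD_\cM$ whose complementary monomials are $\sn_\gl$ are exactly those coming from topes of $\cB_\cM$ — this is where the structure of $O_\cM$, the identity $O_\cM = (\sfm_\gl \mid \gl \in \cC \cap \cL^+)$, and the bijection between topes and maximal cells of $X(\cB_\cM)$ enter — and that the relations among the $\sn_\gl$ in $\wS/O_\cM$ match the relations in $\omega_{\wS/O_\cM}$ forced by the differentials of $\cF_\bullet^X$. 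Verifying that the map $\wS/O_\cM \to \omega_{\wS/O_\cM}$, $\sn_\gl \mapsto$ (generator of the $\sigma$-strand), is well-defined (kernel contains $O_\cM$) and surjective with the correct kernel is the main obstacle: surjectivity follows from the degree count above, but pinning down the kernel requires matching the syzygies, for which I would use the faithfulness of the resolution to track incidence coefficients $[\sigma:\tau]$ cell-by-cell, together with the fact that $\wS/O_\cM$ is Cohen--Macaulay so that $\omega_{\wS/O_\cM}$ is itself Cohen--Macaulay of the same dimension, making a dimension/Hilbert-series comparison decisive. I expect the homology-manifold input (needed to kill $H^d$ on the boundary) to be the part that must be handled carefully, since logically it should be derived here rather than quoted from the later corollary — alternatively, one can bypass it by computing $H^d(C^\bullet(X^{\ge\sigma}))$ directly from the Cohen--Macaulay and contractibility properties of $X$ via the long exact sequence of the pair, which is the route I would ultimately take.
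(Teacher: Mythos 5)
Your proposal identifies the right raw data --- the faithful cellular resolution, the dualization $\Hom_{\wS}(\cF_\bullet^X,\omega_{\wS})$, and the degree bookkeeping sending $\wS(-\deg\sfm_\gl)$ to $\wS(-\deg\sn_\gl)$ --- and this is indeed where the paper's proof starts. But the two places where you say ``one then checks'' and ``pinning down the kernel requires matching the syzygies'' are precisely where the real content lies, and your sketch does not supply it.

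First, the circularity you flag at the end is genuine and is not resolved by your fallback. You want $H^d(C^\bullet(X^{\ge\sigma}))=0$ for $\sigma\in\partial X$, but the paper obtains this (Lemma~\ref{interior}) \emph{from} Theorem~\ref{canonical}, via the identification $H^d(C^\bullet(X^{\ge\sigma}))\cong[\omega_{\wS/O_\cM}]_{\deg\sn_\gl}\cong[J_\cM]_{\deg\sn_\gl}$ together with Lemma~\ref{basic of n-monomial}(2), which says $\sn_\gl\in O_\cM$ when $\gl$ is a boundary cell. Cohen--Macaulayness of $X$ only gives vanishing below degree $d$; contractibility only handles $\sigma=\void$; neither says anything about $H^d$ at a boundary cell, and there is no long-exact-sequence shortcut that avoids the manifold input. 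In fact Lemma~\ref{basic of n-monomial} is a nontrivial oriented-matroid argument: it uses the equivalence in Theorem~\ref{thm:char-genpos} to put $g$ in general position, and then composes cocircuits via \cite[Proposition 3.7.2]{BVSWZ} to produce a cocircuit $\rho$ with $\rho(g)=-$ dominating $\gl$. Your proposal contains no analogue of this step, and without it you cannot even show $J_\cM$ lives in the right degrees. (Also, your statement that the graded pieces of $\omega$ are supported ``exactly'' on tope degrees is false --- $\omega$ is nonzero in many degrees, e.g.\ $\deg\sn_\gl$ for any interior cell $\gl$; what is true is that the \emph{generators} sit in tope degrees, which the presentation~\eqref{presentation of w} already gives for free.)

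Second, matching Hilbert series does not produce an isomorphism $\omega_{\wS/O_\cM}\cong J_\cM$; you need an explicit map of $\wS/O_\cM$-modules. The paper defines $\phi:\bigoplus_{\gl\,\text{tope}}\wS(-\deg\sn_\gl)\to J_\cM$ by $e_\gl\mapsto[\hat 1:\gl]\sn_\gl$, using the incidence function of the regular CW complex $X(\widehat\cL)$ extending $X$, and checks $\phi\circ\psi=0$ by splitting into two cases: boundary subtopes (where Lemma~\ref{basic of n-monomial}(2) kills $\sn_\gm$) and interior subtopes (where the cocycle identity $[\hat 1:\gl_1][\gl_1:\gm]+[\hat 1:\gl_2][\gl_2:\gm]=0$ applies). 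Injectivity of the induced $f:\omega\to J_\cM$ is then proved by a squarefree-module dimension count against Lemma~\ref{generator}, which in turn needs Hochster's formula and local duality to show every facet degree of $\Delta_\cM$ already occurs in $J_\cM$. Your proposed route through Stanley's combinatorial description of $\omega_{\kk[\Delta]}$ is plausible in spirit, but you would still have to prove the same two lemmas (the boundary-cell monomials lie in $O_\cM$; the facet monomials lie in $J_\cM$) to make the identification land on $J_\cM$ and not on some other fractional ideal with the same Hilbert function. As written, those are exactly the steps your proposal leaves blank.
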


To prove the theorem, we need the following lemmas. 

\begin{lem}\label{basic of n-monomial} 
Assume that $\wS/O_\cM$ is Cohen--Macaulay. Then we have the following. 
\begin{enumerate}
\item   $\sn_\mu \in O_\cM$ for any $\mu \in \cL^+ \setminus \cB_\cM$.
\item If $\gl \in \cB_\cM$ corresponds to a cell in the boundary $\partial X(\cB_\cM)$, we have $\sn_\gl \in O_\cM$. 
\end{enumerate}
\end{lem}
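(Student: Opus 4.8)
The plan is to prove both parts together by exploiting the characterization of membership in $O_\cM$ via divisibility: a monomial $u \in \wS$ lies in $O_\cM$ if and only if $\sfm_\kappa \mid u$ for some $\kappa \in \cC \cap \cL^+$, equivalently for some $\kappa \in \cB_\cM$. Since $\sn_\gl = \prod_i x_iy_i / \sfm_\gl$, we must show that for the relevant $\gl$ there is a cocircuit $\kappa \in \cC \cap \cL^+$ with $\sfm_\kappa \mid \sn_\gl$; concretely, writing out the exponents, this means: for every $i \in \kappa^{-1}(+) \setminus \{g\}$ we need $i \notin \gl^{-1}(+)$, and for every $i \in \kappa^{-1}(-)$ we need $i \notin \gl^{-1}(-)$. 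In sign-vector language this says $S(\kappa, \gl) \cap \br n = \void$ after restricting attention to the supports, i.e.\ $\kappa$ and $\gl$ are not ``oppositely signed'' anywhere on $\br n$; since $\kappa(g) = \gl(g) = +$, this is exactly the condition that $S(\kappa,\gl) = \void$, i.e.\ $\kappa \le \gl \circ \kappa$ and the two compositions agree — but more usefully it says $\kappa$ and $\gl$ are \emph{sign-compatible} (no separation).

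**Part (1).** Suppose $\mu \in \cL^+ \setminus \cB_\cM$. By definition of $\cB_\cM$, the interval $(\zvec, \mu]$ is not contained in $\cL^+$, so there exists $\gn \in \cL$ with $\zvec < \gn \le \mu$ and $\gn(g) \ne +$. Because $\gn \le \mu$ and $\mu(g) = +$, we must have $\gn(g) = 0$. Now $\gn \ne \zvec$, so $\gn$ dominates some cocircuit: pick $\kappa \in \cC$ with $\kappa \le \gn$ (using that cocircuits are the $\le$-minimal nonzero covectors). Then $\kappa(g) = 0$. This $\kappa$ is not in $\cL^+$, so I cannot directly use it; instead I would use $\kappa$ together with a tope to produce a cocircuit in $\cC \cap \cL^+$ compatible with $\mu$. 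The cleaner route: since $\kappa \le \gn \le \mu$, every nonzero entry of $\kappa$ agrees with the corresponding entry of $\mu$, so $S(\kappa, \mu) = \void$ and hence $\sfm_\kappa \mid \sfm_\mu \mid$ (dividing $\prod x_iy_i$)... wait — that shows $\sfm_\kappa \mid \sfm_\mu$, not $\sfm_\kappa \mid \sn_\mu$. So the actual argument must go the other way: I want a cocircuit in $\cL^+$ whose support on $\br n$ is \emph{disjoint in sign} from $\mu$'s. The key input here is the Cohen--Macaulay hypothesis, which by Theorem~\ref{thm:char-genpos} (or rather its consequences, since full rank is assumed) tells us $g$ is in general position; equivalently $\underline{(\cC \cap \cL^+)|_{\br n}} = \underline{\cC|_{\br n}}$. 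This lets me replace $\kappa$ (with $\kappa(g) = 0$) by a cocircuit $\kappa' \in \cC \cap \cL^+$ with $\supp(\kappa'|_{\br n}) = \supp(\kappa|_{\br n})$, and then $\kappa' = \kappa \circ (\text{something})$ forces the signs on $\br n$ to match those of $\kappa$, hence of $\mu$; so again $S(\kappa', \mu) \cap \br n = \void$...

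I realize the sign bookkeeping is precisely the crux and I should be careful rather than hand-wave it. The honest plan is: first reduce to showing $\sn_\mu \in \overline{O_\cM}$ is false / true appropriately by passing through $\pi$, or — better — argue directly that $\mu \notin \cB_\cM$ forces the complement pattern. Let me instead organize Part (1) as follows. Consider the monomial $\sn_\mu = \prod_{i \notin \mu^{-1}(+)\setminus g} x_i \cdot \prod_{i \notin \mu^{-1}(-)} y_i$. Its support (as a set of variables) consists of all $x_i$ with $\mu(i) \in \{0,-\}$ and all $y_i$ with $\mu(i) \in \{0,+\}$. A cocircuit $\kappa \in \cC \cap \cL^+$ satisfies $\sfm_\kappa \mid \sn_\mu$ iff $\kappa(i) = +$ implies $\mu(i) \in \{0,-\}$ and $\kappa(i) = -$ implies $\mu(i) \in \{0,+\}$, for all $i \in \br n$; i.e.\ $\br n \subseteq S(\kappa,\mu) \cup (\kappa|_{\br n})^{-1}(0) \cup \dots$ — equivalently $\kappa|_{\br n}$ and $\mu|_{\br n}$ are \emph{never equal and nonzero}, i.e.\ $\supp(\kappa|_{\br n}) \subseteq S(\kappa,\mu)$. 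So I need a positive cocircuit separated from $\mu$ on all of its nonzero part in $\br n$. Using $\mu \notin \cB_\cM$, there is $\gn \le \mu$, $\gn(g) = 0$, $\gn \ne \zvec$; by Covector Axiom (L2) applied to $\gn$ and any tope $\tau \in \cB_\cM$ (which exists, full rank) with... Actually the standard fact (\cite[Theorem 3.2.5]{BVSWZ}, already cited in the proof of Corollary~\ref{cor:redCM}) on cocircuit elimination and the $\cL^+$-structure is what closes this; I would invoke it to manufacture the required $\kappa \in \cC \cap \cL^+$.

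**Part (2).** Let $\gl \in \cB_\cM$ lie on $\partial X(\cB_\cM)$. By the definition of the boundary recalled just above the lemma, there is $\gm \in \cL \setminus \cB_\cM$ with $\gm > \gl$. Then $\gm \in \cL$ and $\gm \ge \gl$ forces $\gm(g) = +$ (since $\gl(g) = +$ and $+$ is maximal), so $\gm \in \cL^+ \setminus \cB_\cM$. By Part (1), $\sn_\gm \in O_\cM$. Since $\gm > \gl$, we have $\sfm_\gl \mid \sfm_\gm$, hence $\sn_\gm \mid \sn_\gl$ (dividing the common monomial $\prod x_iy_i$ by a larger vs.\ smaller monomial reverses the divisibility), and therefore $\sn_\gl \in O_\cM$ as $O_\cM$ is an ideal. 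This step is short and purely formal once Part (1) is in hand.

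**Main obstacle.** The real work is Part (1), and within it the passage from ``$\mu \notin \cB_\cM$, witnessed by a covector $\gn \le \mu$ vanishing at $g$'' to ``there exists $\kappa \in \cC \cap \cL^+$ with $\supp(\kappa|_{\br n}) \subseteq S(\kappa,\mu)$.'' I expect this to require (a) the Cohen--Macaulay hypothesis in the form $g$ in general position / $\underline{(\cC\cap\cL^+)|_{\br n}} = \underline{\cC|_{\br n}}$ from Theorem~\ref{thm:char-genpos}, together with (b) a cocircuit-elimination / composition argument in the oriented matroid, analogous to the use of \cite[Theorem 3.2.5]{BVSWZ} in Corollary~\ref{cor:redCM}, to promote a cocircuit with $\kappa(g)=0$ to one with $\kappa(g)=+$ while controlling signs on $\br n$. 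Everything after that — the divisibility translation and all of Part (2) — is bookkeeping.
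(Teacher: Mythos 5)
Your translation of the target into sign-vector language is exactly right: $\sn_\mu \in O_\cM$ iff there is $\kappa \in \cC \cap \cL^+$ with $\sfm_\kappa \mid \sn_\mu$, iff there is $\xi \in \cC$ with $\xi(g) = -$ and $\mu|_{\br n} \ge \xi|_{\br n}$ (take $\kappa = -\xi$). Your Part~(2) is correct and is precisely the paper's argument. You have also correctly identified the two inputs for Part~(1): the Cohen--Macaulay hypothesis enters only via Theorem~\ref{thm:char-genpos} to give that $g$ is in general position, after which one needs an oriented-matroid composition argument. However, you explicitly stop short of constructing the required cocircuit, and that construction is the actual content of the lemma, so there is a genuine gap. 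Moreover, the tool is not the cocircuit-elimination statement \cite[Theorem~3.2.5]{BVSWZ} you point to (that is what the proof of Corollary~\ref{cor:redCM} uses, for a different purpose); what is needed is the \emph{conformal decomposition} of a covector into cocircuits, \cite[Proposition~3.7.2]{BVSWZ}.

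Here is how the paper closes the gap. Since $\mu \notin \cB_\cM$ and $\mu(g) = +$, there is a cocircuit $\nu \in \cC$ with $\nu(g) = 0$ and $\nu < \mu$. General position of $g$ gives $\nu_1 \in \cC$ with $g \in \supp(\nu_1)$ and $\nu|_{\br n} \ge \nu_1|_{\br n}$. If $\nu_1(g) = -$, then $\xi := \nu_1$ works since $\mu|_{\br n} \ge \nu|_{\br n} \ge \nu_1|_{\br n}$. Otherwise $\nu_1(g) = +$; set $\tilde\nu := \nu \circ (-\nu_1)$. Because $\nu(g) = 0$ and $\supp(\nu_1|_{\br n}) \subseteq \supp(\nu|_{\br n})$, one gets $\tilde\nu(g) = -$ and $\tilde\nu|_{\br n} = \nu|_{\br n}$. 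Now decompose $\tilde\nu$ conformally into cocircuits $\rho_1, \dots, \rho_k$ by \cite[Proposition~3.7.2]{BVSWZ}; each $\rho_i \le \tilde\nu$, and some $\rho_{i_0}$ satisfies $\rho_{i_0}(g) = -$. Then $\mu|_{\br n} \ge \nu|_{\br n} = \tilde\nu|_{\br n} \ge \rho_{i_0}|_{\br n}$, so $\xi := \rho_{i_0}$ does the job. This composition-plus-conformal-decomposition step is exactly the ``promotion'' you anticipated but left unwritten.
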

\begin{proof}
(1) Take any $\mu \in \cL^+ \setminus \cB_\cM$. It suffices to show that there exists $\xi \in \cC$ such that $\xi(g) = -$ and $\mu|_{\br n} \ge \xi|_{\br n}$, because it then follows that $-\xi \in \cC \cap \cL^+$ and $\sfm_{-\xi} \mid \sn_{\mu}$. By the definition of $\cB_\cM$, we have a cocircuit $\nu \in \cC$ with $\nu(g) = 0$ and $\mu > \nu$. By Theorem~\ref{thm:char-genpos}, $g$ is in general position, since $\widetilde S/O_{\cM}$ is Cohen--Macaulay 
(we assume that $\cM$ is of full rank in this section).
Hence there exists $\nu_1 \in \cC$ such that $g \in \supp(\nu_1)$ and $\nu|_{\br n} \ge \nu_1|_{\br n}$. If $\nu_1(g) = -$, then $\nu_1$ satisfies the desired condition.

Assume $\nu_1(g) = +$.
Set $\tilde\nu := \nu \circ (-\nu_1)$. It then follows from $\nu(g) = 0$ and $\supp(\nu|_{\br n}) \supseteq \supp(\nu_1|_{\br n})$ that $\tilde\nu|_{\br n} = \nu|_{\br n}$ and $\tilde\nu(g) = -$. By \cite[Proposition 3.7.2]{BVSWZ}, there exists $\rho_1,\dots, \rho_k \in \cC$ such that $\tilde\nu = \rho_1 \circ \rho_2 \circ \cdots \circ \rho_k$ and $\rho_i(e)\rho_j(e) \ge 0$ for all $i,j$ and all $e \in \supp(\tilde\nu)$. Clearly, $\nu \ge \rho_i$ for all $i$ and there exists $i_0$ such that $\rho_{i_0}(g) = -$. Since $\mu|_{\br n} \ge \nu|_{\br n} = \tilde\nu|_{\br n} \ge \rho_{i_0}|_{\br n}$, the sign vector $\rho_{i_0}$ satisfies the desired condition.

(2)  Since $\gl$ is in the boundary of $\cB_\cM$, there exists $\mu \in \cL^+ \setminus \cB_\cM$ with $\mu > \gl$, or equivalently $\sn_\mu \mid \sn_\gl$. So the assertion directly follows from (1).  
\end{proof}

Recall that $X^{(\ast)} =X(\cB_\cM)$ supports a minimal $\ZZ^{2n}$-graded $\wS$-free resolution $\cF_\bullet^X$ of $\wS/O_\cM$. 
Let  $\omega_{\wS} := \wS(-{\mathbf 1})$ with ${\mathbf 1}=(1, \ldots, 1) \in \ZZ^{2n}$ be the $\ZZ^{2n}$-graded canonical module of $\wS$.
If $\wS/O_\cM$ is Cohen--Macaulay, then 
$$
H^i(\Hom_{\wS}^\bullet(\cF^X_\bullet, \omega_{\wS})) \cong 
\begin{cases}
\omega_{\wS/O_\cM} & \text{if $i= \dim X+1$,}\\
0 & \text{otherwise.}
\end{cases}
$$
Hence  $\Hom_{\wS}^\bullet(\cF^X_\bullet, \omega_{\wS})$ gives a minimal $\ZZ^{2n}$-graded  $\wS$-free resolution of $\omega_{\wS/O_\cM}$ up to shift. We also remark that $$\Hom_{\wS}(\wS(-\deg(\sfm_\gl)), \omega_{\wS}) \cong \wS(-\deg(\sn_\gl))$$ 
for each $\gl \in \cL$. As in the proof of Theorem~\ref{CM properties}, 
if $\gl  \in \cB_\cM$ corresponds to $\sigma \in X^{(\ast)}$, we have  
\begin{equation}\label{component of omega}
H^d(C^\bullet(X^{\ge \sigma})) \cong [\omega_{\wS/O_\cM}]_{\deg(\sn_\gl)},
\end{equation}
where $d:= \dim X$, and $C^\bullet(X^{\ge \sigma})$ is the cochain complex defined in \S 3. 
The minimal presentation of $ \omega_{\wS/O_\cM}$ is of the form 
\begin{equation}\label{presentation of w}
\bigoplus_{\gm: \text{ subtope of $\cB_\cM$}} \wS(-\deg(\sn_\gm)) \stackrel {\psi}{\longto} 
 \bigoplus_{\gl: \text{ tope of $\cB_\cM$}} \wS(-\deg(\sn_\gl)) \longto  \omega_{\wS/O_\cM} \longto 0. 
\end{equation} 
Here $\psi$ sends the basis element $e_\gm$ of a free summand $\wS(-\deg(\sn_\gm))$ to 
$$\sum_{\gl: \text{ tope of $\cB_\cM$}}  [\lambda: \mu] \cdot (\sn_\gm/\sn_\gl) \, e_\gl \in  \bigoplus_{\gl: \text{ tope of $\cB_\cM$}}  \wS(-\deg(\sn_\gl)).$$
Note that, in this situation,  $ [\lambda: \mu] \ne 0$, if and only if $\gm < \lambda$, if and only if $\sn_\gl$ divides $\sn_\gm$. 

In the following result, the assumption that $\wS/O_\cM$ is Cohen--Macaulay is superfluous, while we do not omit it here for simple exposition.



\begin{lem}\label{generator}
Assume that $\wS/O_\cM$ is Cohen--Macaulay. 
Let  $\Delta_\cM \subset 2^{\br {2n}}$ be the simplicial complex whose Stanley--Reisner ring  is $\wS/O_\cM$.
For a squarefree monomial $\sfm \in \wS/O_\cM$ whose support corresponds to a facet of $\Delta_\cM$, we have $\sfm \in J_\cM$. 
Here $J_\cM$ is the ideal of $\wS/O_\cM$ defined in Theorem~\ref{canonical}.  
\end{lem}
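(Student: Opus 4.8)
The plan is to produce, for a facet of $\Delta_\cM$ regarded as a squarefree monomial $\sfm$, a tope $\gl$ of $\cB_\cM$ with $\sn_\gl\mid\sfm$ in $\wS$; then $\sfm\in(\sn_\gl)\subseteq J_\cM$, which is what we want. Write $P$ (resp.\ $N$) for the set of $i\in\br n$ with $x_i$ (resp.\ $y_i$) dividing $\sfm$, and put $P^c:=\br n\setminus P$ and $N^c:=\br n\setminus N$.

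First I would pin down the shape of $\sfm$. Since $\sfm\notin O_\cM$ and $\sfm$ is maximal, one gets $P^c\cap N^c=\void$: if some $i_0$ had $x_{i_0},y_{i_0}\nmid\sfm$, then $x_{i_0}\sfm,y_{i_0}\sfm\in O_\cM$ give cocircuits $\gl,\gm\in\cC\cap\cL^+$ with $i_0\in S(\gl,\gm)$, and cocircuit elimination at $i_0$ --- exactly as in the proof of Corollary~\ref{cor:redCM}, via \cite[Theorem 3.2.5]{BVSWZ} --- yields $\nu\in\cC\cap\cL^+$ with $\sfm_\nu\mid\sfm$, a contradiction. Hence $P\cup N=\br n$; and since $\wS/O_\cM$ is Cohen--Macaulay, $\Delta_\cM$ is pure of dimension $2n-r-1$ (using $\dim\wS/O_\cM=2n-r$ from Theorem~\ref{thm:char-genpos}), so $B:=P^c\sqcup N^c$ has exactly $r=\rank\cM$ elements. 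Let $\sigma\in\signs^{B\cup\cbr g}$ be given by $\sigma\equiv+$ on $P^c\cup\cbr g$ and $\sigma\equiv-$ on $N^c$. Unwinding the definition of $O_\cM$, the condition $\sfm\notin O_\cM$ says exactly that no cocircuit $\xi\in\cC\cap\cL^+$ satisfies $\xi(i)\le-\sigma(i)$ for every $i\in B$.

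The main step --- which I expect to be the hard part --- is to show that $\sigma$ is a tope of the restriction $\cM|_{B\cup\cbr g}$. Assuming this, $\sigma\in\cL|_{B\cup\cbr g}$, so some covector of $\cM$ restricts to $\sigma$ on $B\cup\cbr g$; composing it with any tope of $\cM$ via $\circ$ gives a tope $\gl$ of $\cM$ with $\gl|_{B\cup\cbr g}=\sigma$. Then $\gl\in\cL^+$, and because $P^c\cap N^c=\void$ a direct check shows $\sn_\gl\mid\sfm$; finally, if $\gl\notin\cB_\cM$ then Lemma~\ref{basic of n-monomial}(1) gives $\sn_\gl\in O_\cM$, hence $\sfm\in O_\cM$, which is absurd. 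So $\gl$ is a tope of $\cB_\cM$ and $\sfm\in J_\cM$.

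To prove that $\sigma$ is a tope of $\cM|_{B\cup\cbr g}$ I would argue by contradiction, using orthogonality of circuits and cocircuits. The facet condition above first forces $B$ to be a basis of $\underline\cM$: otherwise $B$ lies in a hyperplane $H$, and the cocircuit complementary to $H$ vanishes on $B$; replacing it if necessary by its negative, or --- using that $g$ is in general position, which holds by Theorem~\ref{thm:char-genpos} --- by a cocircuit through $g$ conforming to it, one contradicts the facet condition. Thus $\cM|_{B\cup\cbr g}$ has corank $1$; after deleting its coloops, which lie in $B$ and whose $\sigma$-values are irrelevant to being a tope, we may assume it is coloop-free, so its unique circuit $D_0$ has full support, equal to the fundamental circuit of $g$ with respect to $B$. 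If $\sigma$ were not a tope, then $D_0\le\sigma$ or $D_0\le-\sigma$; negating $D_0$ we may take $D_0\le\sigma$, so in particular $D_0(g)=+$. Pick $f_0\in\supp D_0\setminus\cbr g\subseteq B$. Then $\operatorname{cl}_{\underline\cM}(B\setminus\cbr{f_0})$ is a hyperplane avoiding both $f_0$ and $g$ (this uses that $f_0$ lies in that fundamental circuit), so its complementary cocircuit $\xi$ satisfies $\supp\xi\cap\supp D_0=\cbr{f_0,g}$; orienting $\xi$ so that $\xi(g)=D_0(g)=+$ forces $\xi(f_0)=-D_0(f_0)=-\sigma(f_0)$, and since $\xi$ vanishes on $B\setminus\cbr{f_0}$ this gives $\xi\in\cC\cap\cL^+$ with $\xi(i)\le-\sigma(i)$ for all $i\in B$ --- contradicting the facet condition. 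The two delicate points in this argument are the possible coloops of $\cM|_{B\cup\cbr g}$ and the fact that the facet condition is not symmetric under negating sign vectors (it concerns $\cL^+$, not all of $\cL$); both are handled by the general-position hypothesis and by noting that the cocircuit $\xi$ produced above automatically carries the correct sign at $g$.
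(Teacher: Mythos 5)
Your proposal is correct in outline, but it takes a genuinely different route from the paper's proof, which is a very short commutative‑algebra argument: one applies local duality to get $\dim_\kk[\omega_{\wS/O_\cM}]_{\deg\sfm}=\dim_\kk[H_\mathfrak{m}^{d'}(\wS/O_\cM)]_{-\deg\sfm}$, observes by Hochster's formula that this is $1$ when $\deg\sfm$ is a facet degree, and then reads off from the minimal presentation \eqref{presentation of w} (supplied by the cellular resolution) that every minimal generator of $\omega_{\wS/O_\cM}$ sits in degree $\deg\sn_\gl$ for some tope $\gl$ of $\cB_\cM$, whence $\sn_\gl\mid\sfm$. By contrast you bypass the canonical module entirely and construct the tope $\gl$ explicitly from oriented‑matroid data: cocircuit elimination (exactly as in Corollary~\ref{cor:redCM}) forces $P\cup N=\br n$, purity pins $\#B=r$, general position (via Theorem~\ref{thm:char-genpos}) forces $B$ to be a basis, and circuit--cocircuit orthogonality in the corank‑one restriction $\cM|_{B\cup\cbr g}$ rules out $\sigma\in\cbr{D_0,-D_0}$; composing with a tope and invoking Lemma~\ref{basic of n-monomial}(1) then lands $\gl$ in $\cB_\cM$. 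Your route is more constructive and avoids Hochster's formula and local duality, at the cost of considerably more oriented‑matroid bookkeeping. Two places where you should add care: (i) when $B$ is contained in a hyperplane $H$ with $g\in H$, the cocircuit $\xi$ with $\xi(g)=0$ need not have $\xi|_{\br n}$ minimal, so you should pass first to a cocircuit $\gm'\in\cC|_{\br n}$ with $\supp\gm'\subseteq\supp(\xi|_{\br n})$ before invoking general position; (ii) the identification of $\underline{\cM|_{B\cup\cbr g}}$ with $\underline\cM|_{B\cup\cbr g}$ and the behavior of coloops under the paper's restriction operator should be spelled out, since the paper keeps its matroid conventions in ``dual form.'' With these filled in, your argument is sound; note also that both proofs implicitly use the standing full‑rank hypothesis of Section~5, yours through Theorem~\ref{thm:char-genpos} and the paper's through the presentation \eqref{presentation of w}.
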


\begin{proof}
Set $d':= \dim \wS/O_\cM =\dim \Delta_\cM +1$, and let $\mathfrak m := (x_i,y_i \mid i \in \br n)$ be the graded maximal ideal of $\wS$. 
For $\ba \in \NN^{2n}$,  we have $\dim_\kk [\omega_{\wS/O_\cM}]_{\ba}= \dim_\kk [H_{\mathfrak m}^{d'}(\wS/O_\cM)]_{-\ba}$ by the local duality theorem, and the right side can be computed by Hochster's formula (c.f. \cite[Theorem~5.3.8]{BH}). Here $H_{\mathfrak m}^{d'}(\wS/O_\cM)$ denotes the $d'$-th local cohomology module with respect to $\mathfrak m$. If $\ba = \deg(\sfm)$ (that is, the support of 
$\ba$ is a facet of $\Delta_\cM$), then $\dim_\kk [H_{\mathfrak m}^{d'}(\wS/O_\cM)]_{-\ba}=1$ and hence 
$[\omega_{\wS/O_\cM}]_{\deg(\sfm)} \ne 0$. Since any minimal homogeneous generator of $\omega_{\wS/O_\cM}$ has the 
same degree as $\sn_\gl$ for some tope  $\gl \in \cB_\cM$ by \eqref{presentation of w}, 
$\sfm$ can be divided by $\sn_\gl$ for some tope $\gl \in \cB_\cM$. Since $\sn_\gl \in J_\cM$, we are done. 
\end{proof}

\begin{proof}[Proof of Theorem~\ref{canonical}.] 
Recall that $\cL$ gives a regular CW complex  whose underlying space is homeomorphic to a $d$-sphere. Hence $\widehat{\cL} := \cL \cup \{ \hat{1}\}$ gives a regular CW complex $X(\widehat{\cL})$ whose underlying space is homeomorphic to a $(d+1)$-ball. By abuse of notation, $\hat{1}$ also denotes the maximum cell of  $X(\widehat{\cL})$.
Clearly, $X^{(\ast)}:= X(\cB_\cM)$ is a subcomplex of   $X(\widehat{\cL})$. 
In the sequel, as an incidence function of $X^{(\ast)}$, we use the restriction of an incidence function of $X(\widehat{\cL})$.

Define a  $\ZZ^{2n}$-graded $\wS$-morphism 
$$\phi: \bigoplus_{\gl: \text{ tope of $\cB_\cM$}} \wS(-\deg(\sn_\gl)) \longto J_\cM$$
by $e_\gl \longmapsto [\hat{1} : \gl] \cdot \sn_\gl  \in J_\cM$, where $e_\gl$ is a free basis corresponding to a direct summand $\wS(-\deg(\sn_\gl))$. 
Clearly, $\phi$ is surjective. For the map $\psi$ appearing in the minimal presentation \eqref{presentation of w} of $\omega_{\wS/O_\cM}$, we will show that $\phi \circ \psi =0$.  To do this, it suffices to show that $\phi \circ \psi(e_\gm)=0$ for all subtope $\gm$ of $\cB_\cM$.

Let $\gl \in \cB_\cM$ be a tope, and $\gm \in \cB_\cM$ a subtope with $\gm  < \gl$. Clearly, $\sn_\gl$ divides $\sn_\gm$. 
If $\gm$ corresponds to a cell in  $\partial X^{(\ast)}$, then $(\sn_\mu/\sn_\gl)\cdot \sn_\gl =\sn_\gm \in O_\cM$ by Lemma~\ref{basic of n-monomial} (2).  Hence $[J_\cM]_{\deg(\sn_\gm)}=0$,  and we have $\phi \circ \psi(e_\gm)=0$. 
Next we consider the case $\gm$ does  not belong to  $\partial X^{(\ast)}$.
In this case, $\gm$ is contained in exactly two 
topes of $\cB_\cM$, say $\gl_1$ and $\gl_2$.
By a property of the incidence function, $\phi \circ \psi$ sends $e_\gm \in \wS(-\deg(\sn_\mu))$ to 
\[
  ([\hat{1} : \lambda_1] \cdot [\lambda_1 : \mu ] + [\hat{1} : \lambda_2] \cdot [\lambda_2 : \mu ]) \cdot \sn_\gm = 0.
  \]
Now we have $\phi \circ \psi =0$. Hence we have a $\ZZ^{2n}$-graded $\wS$-homomorphism 
$$f:   \omega_{\wS/O_\cM} (\cong \opn{coker} (\psi)) \longto J_\cM$$
by the universal property of the cokernel. 
Since $\phi$ is surjective, $f$ is also. So it suffices to show that it is injective. 

For the contradiction, assume that $\ker(f) \ne 0$. 
Then
\[
\dim (\ker(f)) = \dim \omega_{\wS/O_\cM} = \dim \Delta_\cM +1,
\]
where $\Delta_\cM$ is the simplicial complex defined in Lemma~\ref{generator}.   
Since $\ker(f)$ is a {\it squarefree module} over $\wS$ (see \cite{Y00} for the definition and basic properties), there is  a facet $F \in \Delta_\cM$ with $[\ker(f)]_F \ne0$. Here we identify $F \subset \br{2n}$ with the 
corresponding squarefree vector in $\ZZ^{2n}$.  On the other hand, we have  $[J_\cM]_F \ne 0$ for all facets $F \in \Delta_\cM$ 
by Lemma~\ref{generator}. 
Similarly, we have shown that $\dim_\kk [\omega_{\wS/O_\cM}]_F =1$ for all facets $F \in \Delta_\cM$ in the proof of Lemma~\ref{generator}. 
Hence we have $[\ker(f)]_F =0$ for all facets $F$, and this is a contradiction. 
\end{proof}
%


\section{$\wS/O_\cM$ is Cohen--Macaulay implies $\cB_\cM$ is a homology manifold}

In this section, under the assumption that $\widetilde S/O_\cM$ is Cohen--Macaulay, we will prove that $X(\cB_\cM)$ and its boundary $\partial X(\cB_\cM)$ are homology manifolds over $\ZZ$. 
Moreover,  the same is true for (the geometric realization of) the simplicial complex $\Delta_\cM$ whose Stanley--Reisner ring coincides with $\wS/O_\cM$, while the dimensions of $X(\cB_\cM)$ and $\Delta_\cM$ are different.    
To obtain these results, the description of $\omega_{\wS/O_\cM}$ given in the previous section plays a role. 

Let us first recall the definition of homology manifolds. There are several slightly different definitions (cf.\ \cite{Mi,Mu,Sp}).
Here we eclectically adopt the definition from \cite{Mi} and \cite{Mu}.
There is no problem in doing so since in this paper we deal with only finite regular CW complexes.

A finite regular CW complex $X$ is said to be a \emph{homology $n$-manifold} over an abelian group $G$ if
\[
H_i(X,X\setminus\cbr{x};G) \cong
\begin{cases}
  0 & \text{for $i \neq n$,} \\
  0 \text{ or }G & \text{for $i = n$.}
\end{cases}
\]
If this is the case, the subset
\begin{equation}
  \set{x \in X}{H_n(X,X\setminus\cbr{x};G) \cong 0} \label{eq:bd}
\end{equation}
is called the boundary of $X$.

The next lemma is probably well-known for specialists, but let us introduce it for completeness. The proof is an easy exercise using the universal coefficient theorem and the structure theorem of finitely generated abelian groups.

\begin{lem}\label{lem:redhom-lem}
Let $(X,A)$ be a topological pair and $n$ an integer. Assume $H_i(X,A;\ZZ)$ is finitely generated for all $i$. Then the following are equivalent.
\begin{enumerate}
\item $H_i(X,A;\ZZ) = 0$ for all $i < n$.
\item $H_i(X,A;\kk) = 0$ for all $i < n$ and any field $\kk$.
\end{enumerate}
If these are the cases, then $H_n(X,A;\ZZ) \cong \ZZ$ if and only if $\dim_\kk H_n(X,A;\kk) = 1$ for any field $\kk$.
\end{lem}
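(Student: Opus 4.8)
The plan is to reduce everything to the universal coefficient theorem for homology, which for any field $\kk$ and every $i$ gives a split short exact sequence
\[
0 \longto H_i(X,A;\ZZ) \otimes_\ZZ \kk \longto H_i(X,A;\kk) \longto \Tor_1^\ZZ(H_{i-1}(X,A;\ZZ),\kk) \longto 0.
\]
Since the integral homology groups are finitely generated by hypothesis, the structure theorem for finitely generated abelian groups lets us write $H_i(X,A;\ZZ) \cong \ZZ^{r_i} \oplus T_i$, where $T_i$ is a finite direct sum of cyclic groups of prime-power order. With this in hand the implication (1) $\imply$ (2) is immediate: if $H_i(X,A;\ZZ) = 0$ for all $i < n$, then for such $i$ the term $H_i(X,A;\ZZ) \otimes_\ZZ \kk$ vanishes, and since $i-1 < n$ as well the $\Tor$ term also vanishes, so $H_i(X,A;\kk) = 0$ for every field $\kk$.

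For (2) $\imply$ (1) I would argue by contradiction. Suppose some integral homology group in a degree $< n$ is nonzero and let $j$ be the least such degree; this exists because $H_i(X,A;\ZZ) = 0$ for $i < 0$. By minimality of $j$ we have $H_{j-1}(X,A;\ZZ) = 0$, so the $\Tor$ term in degree $j$ vanishes and the UCT gives $H_j(X,A;\kk) \cong H_j(X,A;\ZZ) \otimes_\ZZ \kk$. Since $H_j(X,A;\ZZ) \neq 0$, either $r_j > 0$, in which case $H_j(X,A;\QQ) \neq 0$, or $r_j = 0$ and $T_j$ contains a summand $\ZZ/p^k\ZZ$ for some prime $p$, in which case $H_j(X,A;\FF_p) \neq 0$. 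Either way we contradict (2), so (1) must hold.

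Finally, assume (1) (equivalently (2)) holds. Because $H_{n-1}(X,A;\ZZ) = 0$, the UCT sequence in degree $n$ degenerates to an isomorphism $H_n(X,A;\kk) \cong H_n(X,A;\ZZ) \otimes_\ZZ \kk$ for every field $\kk$. If $H_n(X,A;\ZZ) \cong \ZZ$, then $\dim_\kk H_n(X,A;\kk) = 1$ for every $\kk$. Conversely, if $\dim_\kk H_n(X,A;\kk) = 1$ for all fields $\kk$, then taking $\kk = \QQ$ forces $r_n = 1$, while taking $\kk = \FF_p$ for each prime $p$ forces $T_n$ to have no $p$-torsion, hence $T_n = 0$ and $H_n(X,A;\ZZ) \cong \ZZ$. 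The whole argument is routine; the only point requiring a little care — and the closest thing to an obstacle — is the selection of the test fields, namely using $\QQ$ to detect the free rank and the prime fields $\FF_p$ to detect torsion, which is exactly where the finite-generation hypothesis is used.
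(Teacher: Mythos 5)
Your proof is correct, and it takes exactly the route the paper indicates; the paper itself states only that the lemma is ``an easy exercise using the universal coefficient theorem and the structure theorem of finitely generated abelian groups'' and does not spell out the details. Your write-up fills in those details cleanly: the UCT split exact sequence, the vanishing of the $\Tor$ term one degree below, the case split using $\QQ$ to detect free rank and $\FF_p$ to detect $p$-torsion, and the minimal-degree contradiction for (2) $\Rightarrow$ (1). Nothing to add or correct.
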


Let $\cM$ be an affine oriented matroid with the bounded complex $\cB_\cM$. The following lemma in conjunction with Proposition~\ref{prop:link-iso} implies that $X(\cB_\cM)$ is a homology manifold over $\kk$ when $\wS/O_\cM$ is Cohen--Macaulay.

\begin{lem}\label{interior}
Assume that $\wS/O_\cM$ is Cohen--Macaulay.  For a cell $\sigma \in X^{(\ast)} := X(\cB_\cM)$,  we have  
$$
H^i(C^\bullet(X^{\ge \sigma})) \cong  
\begin{cases}
\kk & \text{if $i= \dim X$ and $\sigma \not \in \partial X^{(\ast)}$,} \\
0 & \text{otherwise.}
\end{cases}
$$
\end{lem}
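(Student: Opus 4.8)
The plan is to separate the claim into two parts: the vanishing of $H^i(C^\bullet(X^{\ge\sigma}))$ for $i\ne d:=\dim X(\cB_\cM)$, which can be handled uniformly, and the computation of the top cohomology $H^d(C^\bullet(X^{\ge\sigma}))$, which I would treat according to whether the cell $\sigma$ lies on the boundary. Throughout write $X:=X(\cB_\cM)$.

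For the first part, since $\wS/O_\cM$ is Cohen--Macaulay, Corollary~\ref{cor: ring CM => B_M CM} gives that $X$ is a Cohen--Macaulay space, and then Lemma~\ref{CM criterion} yields $H^i(C^\bullet(X^{\ge\sigma}))=0$ for every $\sigma\in X^{(\ast)}$ and every $i\ne d$. This also settles $\sigma=\void$ completely, because $H^i(C^\bullet(X^{\ge\void}))\cong\widetilde H^i(X;\kk)$ by Proposition~\ref{prop:link-iso} and $X$ is contractible \cite[Theorem 4.5.7]{BVSWZ}. Hence I may assume that $\sigma$ is a nonempty cell, corresponding to some $\gl\in\cB_\cM$, and only the evaluation of $H^d(C^\bullet(X^{\ge\sigma}))$ remains.

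Suppose first $\gl\notin\partial X^{(\ast)}$. By the definition of the boundary, every $\gm\in\cL$ with $\gm\ge\gl$ already belongs to $\cB_\cM$, so the order filter $X^{\ge\sigma}$ of $X$, together with its incidence function, coincides with the order filter $X(\cL)^{\ge\sigma}$ of the big sphere complex $X(\cL)$ (recall that the incidence function on $X$ is taken to be the restriction of the one on $X(\widehat\cL)$). Since $X(\cL)$ is a regular CW decomposition of $S^{r-1}$, $r:=\rank\cM$, by Theorem~\ref{thm:TRT}, and since $d=r-1$ because $\cB_\cM$ is of full rank and pure with topes that are topes of $\cL$, Proposition~\ref{prop:link-iso} applied to $X(\cL)$ gives $H^i(C^\bullet(X^{\ge\sigma}))\cong H^i(S^{r-1},S^{r-1}\setminus\cbr x;\kk)$ for any $x\in\sigma$. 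The latter group is $\kk$ for $i=r-1=d$ and $0$ otherwise, exactly as asserted.

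Suppose instead $\gl\in\partial X^{(\ast)}$. Then $\sn_\gl\in O_\cM$ by Lemma~\ref{basic of n-monomial}(2). On the other hand, \eqref{component of omega} identifies $H^d(C^\bullet(X^{\ge\sigma}))$ with $[\omega_{\wS/O_\cM}]_{\deg(\sn_\gl)}$, and Theorem~\ref{canonical} identifies $\omega_{\wS/O_\cM}$ with the ideal $J_\cM$ of $\wS/O_\cM$; thus $H^d(C^\bullet(X^{\ge\sigma}))\cong[J_\cM]_{\deg(\sn_\gl)}$. But $\sn_\gl$ is squarefree, so $\deg(\sn_\gl)$ is a $0/1$-vector in $\NN^{2n}$ and $\sn_\gl$ is the only monomial of $\wS$ of that multidegree; hence $[\wS/O_\cM]_{\deg(\sn_\gl)}=\kk\cdot\overline{\sn_\gl}=0$, and a fortiori $[J_\cM]_{\deg(\sn_\gl)}=0$. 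Together with the first part this exhausts all cases. The point requiring the most care is the identification, in the interior case, of $X^{\ge\sigma}$ with $X(\cL)^{\ge\sigma}$ as cochain complexes---that no face above $\sigma$ is lost on passing from $\cL$ to $\cB_\cM$---together with the bookkeeping $\dim X(\cB_\cM)=r-1$; the boundary case, by contrast, is a formal consequence of the description of the canonical module and Lemma~\ref{basic of n-monomial}(2).
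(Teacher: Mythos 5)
Your proof is correct and follows essentially the same route as the paper's: vanishing for $i\ne\dim X$ via Cohen--Macaulayness and Lemma~\ref{CM criterion}, the interior case by identifying $X^{\ge\sigma}$ with the corresponding order filter of the sphere $X(\cL)$ and applying Proposition~\ref{prop:link-iso}, and the boundary case via \eqref{component of omega}, Theorem~\ref{canonical}, and Lemma~\ref{basic of n-monomial}(2). The only differences are cosmetic: you spell out the dimension count $\dim X = r-1$ and treat $\sigma=\void$ separately via contractibility, both of which the paper leaves implicit.
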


\begin{proof}
Since $X^{(\ast)}$ is Cohen--Macaulay by Theorem~\ref{CM properties}, $H^i(C^\bullet(X^{\ge \sigma})) = 0$ for all $i  < d: =\dim X$ by Lemma~\ref{CM criterion}. 
So it suffices to consider $H^d(C^\bullet(X^{\ge \sigma}))$. First, assume that $\sigma \not \in \partial X^{(\ast)}$.  
Recall that the covector set $\cL  \, (\supset \cB_\cM)$ is also a CW poset, and gives a regular CW complex $Y^{(\ast)}$ whose underlying space is homeomorphic to a sphere (Theorem~\ref{thm:TRT}). 
Note that $X^{(\ast)}$ is a subcomplex of $Y^{(\ast)}$ with $\dim Y^{(\ast)}= \dim X^{(\ast)} (=d)$. 
Since $\sigma \in X^{(\ast)} \setminus \partial X^{(\ast)}$, we have $X^{\ge \sigma} = Y^{\ge \sigma}$. Since $Y^{(\ast)}$ is a $d$-sphere, we have 
$$H^d(C^\bullet(X^{\ge \sigma})) \cong H^d(C^\bullet(Y^{\ge \sigma})) \cong \kk.$$
The second isomorphism follows from Proposition~\ref{prop:link-iso}.   

Next, assume that $\sigma \in \partial X^{(\ast)}$. Let $\gl \in \cB_\cM$ be the covector corresponding to $\sigma$. 
Then $[\wS/O_\cM]_{\deg(\sn_\gl)} = 0$ by Lemma~\ref{basic of n-monomial} (2), so we have $[\omega_{\wS/O_\cM}]_{\deg(\sn_\gl)} = 0$. 
Hence the assertion follows from \eqref{component of omega}. 
\end{proof}

As an immediate consequence, the following result can be deduced.

\begin{cor}\label{cor:X-hmfd}
Assume $\widetilde S/O_\cM$ is Cohen--Macaulay (for some field $\kk$), then $X := X(\cB_\cM)$ is a homology manifold with boundary over $\ZZ$. Moreover the boundary defined by \eqref{eq:bd} coincides with the natural one.
\end{cor}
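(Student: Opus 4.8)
The plan is to combine Lemma~\ref{interior} with the isomorphisms of Proposition~\ref{prop:link-iso} to get the homology-manifold property over an arbitrary field $\kk$, and then to upgrade to $\ZZ$-coefficients using Lemma~\ref{lem:redhom-lem} together with the fact that $X=X(\cB_\cM)$ is a \emph{finite} regular CW complex (so all the relevant homology groups are finitely generated). First I would fix a point $x \in X$ and let $\sigma \in X^{(\ast)}$ be the unique cell with $x$ in its (relative) interior; then Proposition~\ref{prop:link-iso} gives $H^i(X,X\setminus\cbr x;\kk) \cong H^i(C^\bullet(X^{\ge\sigma}))$ for all $i$, and Lemma~\ref{interior} tells us this is $\kk$ when $i=\dim X$ and $\sigma\notin\partial X^{(\ast)}$, and $0$ otherwise. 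By the universal coefficient theorem this cohomology computation is equivalent to the corresponding statement for homology $H_i(X,X\setminus\cbr x;\kk)$, so $X$ is a homology $(\dim X)$-manifold over every field $\kk$, with the field-coefficient boundary equal to the set of $x$ lying in a cell $\sigma \in \partial X^{(\ast)}$.

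Next I would pass to $\ZZ$. For each $x$, the pair $(X,X\setminus\cbr x)$ has finitely generated integral homology since $X$ is a finite regular CW complex. We have just shown $H_i(X,X\setminus\cbr x;\kk)=0$ for all $i<\dim X$ and every field $\kk$, so part (2) $\Rightarrow$ (1) of Lemma~\ref{lem:redhom-lem} (with $n=\dim X$, $A = X\setminus\cbr x$) gives $H_i(X,X\setminus\cbr x;\ZZ)=0$ for all $i<\dim X$. Since $X$ is compact of dimension $\dim X$, the relative homology vanishes above degree $\dim X$ as well, so the only possibly-nonzero group is $H_{\dim X}(X,X\setminus\cbr x;\ZZ)$. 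When $\sigma\notin\partial X^{(\ast)}$ we have $\dim_\kk H_{\dim X}(X,X\setminus\cbr x;\kk)=1$ for every $\kk$, so the last clause of Lemma~\ref{lem:redhom-lem} yields $H_{\dim X}(X,X\setminus\cbr x;\ZZ)\cong\ZZ$; when $\sigma\in\partial X^{(\ast)}$ this group is $\kk$-trivially $0$ for all $\kk$, hence $0$ over $\ZZ$ by the same lemma (its torsion would survive mod some prime). Thus $X$ is a homology manifold with boundary over $\ZZ$ in the sense of \eqref{eq:bd}.

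Finally, for the last sentence I would simply observe that the boundary of $X$ as defined by \eqref{eq:bd} is $\set{x\in X}{H_{\dim X}(X,X\setminus\cbr x;\ZZ)\cong 0}$, which by the computation above is exactly $\set{x\in X}{\sigma_x \in \partial X^{(\ast)}}$ where $\sigma_x$ is the carrier of $x$; this is precisely the underlying space of the subcomplex $\partial X^{(\ast)}$, i.e. the natural notion of boundary of the regular CW complex $X(\cB_\cM)$ introduced before Lemma~\ref{basic of n-monomial}. I expect no serious obstacle here: the real content has already been packaged into Lemma~\ref{interior} (whose own proof invokes the sphericity of $X(\cL)$ via Theorem~\ref{thm:TRT} on the interior cells, and the description of $\omega_{\wS/O_\cM}$ via \eqref{component of omega} together with Lemma~\ref{basic of n-monomial}(2) on the boundary cells), and the only remaining step is the routine field-to-$\ZZ$ bootstrap via Lemma~\ref{lem:redhom-lem}, which works precisely because finiteness of the CW complex guarantees finitely generated homology.
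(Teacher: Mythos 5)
Your proof assembles the same three ingredients (Proposition~\ref{prop:link-iso}, Lemma~\ref{interior}, and Lemma~\ref{lem:redhom-lem}) that the paper's one-line proof cites, and your expansion of the intermediate details---universal coefficients to pass from cohomology to homology over a field, vanishing of relative homology above $\dim X$ because $X$ is a finite CW complex of that dimension, and the dichotomy according to whether the carrier cell of $x$ lies in $\partial X^{(\ast)}$---is all correct.

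The one gap is the transition from ``Cohen--Macaulay for some field $\kk$'' (the hypothesis) to ``$X$ is a homology $(\dim X)$-manifold over \emph{every} field $\kk$'' (your intermediate claim). Lemma~\ref{interior} requires Cohen--Macaulayness of $\wS/O_\cM$ over whichever field one is computing the cochain complex $C^\bullet(X^{\ge\sigma})$ over, and Lemma~\ref{lem:redhom-lem} then demands vanishing over every field. You therefore need to know that Cohen--Macaulayness of $\wS/O_\cM$ is independent of the base field, which is a consequence of Theorem~\ref{thm:CM-implies-matroid} ($S/\overline{O_\cM}$ is a Stanley--Reisner ring of a matroid, hence Cohen--Macaulay over every field) and which the paper's proof explicitly invokes as its opening sentence. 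Your write-up asserts the every-field statement without supplying this justification; inserting that one observation closes the gap, after which the rest of your argument goes through exactly as in the paper.
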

\begin{proof}
  As is stated below Theorem~\ref{thm:CM-implies-matroid}, the Cohen--Macaulayness of $\widetilde S/O_\cM$ does not depend on the base field $\kk$.
  Hence it follows from Proposition~\ref{prop:link-iso} and Lemmas~\ref{lem:redhom-lem}, \ref{interior} that $X$ is a homology $(\dim X)$-manifold with boundary
\[
\bigcup_{\sigma \in \partial X^{(*)}}\sigma = \set{x \in X}{H_{\dim X}(X,X \setminus \cbr x;\kk) \cong 0}
\]
over $\ZZ$.
\end{proof}

For $\gl \in \cB_\cM$, we set
\[
\cB_\cM^{>\gl} := \set{\mu \in \cB_\cM}{\mu > \gl}.
\]
In the case where $\rank \cB_\cM = 2$, the Cohen--Macaulayness of $X(\cB_\cM)$, hence that of $\widetilde S/O_\cM$ (Corollary~\ref{cor: ring CM => B_M CM}) implies $X(\cB_\cM)$ to be homeomorphic to a closed ball (cf. Figures~\ref{4lines}, \ref{fig:pp-cm-arr}, \ref{fig:pp-noncm-arr}, and \ref{fig:cmgap}).

\begin{prop} \label{prop:rank2}
Assume $\rank \cB_\cM = 2$. Then $X(\cB_\cM)$ is homeomorphic to a closed ball if and only if it is Cohen--Macaulay over a field $\kk$ as a topological space.
\end{prop}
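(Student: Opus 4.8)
The plan is to prove the two implications separately; the direction ``ball $\Rightarrow$ Cohen--Macaulay'' is routine, and the reverse is where the work lies. For the easy direction I would observe that a closed $2$-ball $D^2$ satisfies $\widetilde H^i(D^2;\kk)=0$ for all $i$, and $H^i(D^2,D^2\setminus\cbr x;\kk)=0$ for all $i<2$ and all $x\in D^2$ (for an interior point this is the local cohomology of $\RR^2$, and for a boundary point $D^2\setminus\cbr x$ is contractible); by the criterion recalled just before Lemma~\ref{CM criterion}, $D^2$, and hence $X(\cB_\cM)$, is then Cohen--Macaulay over every field.

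Now assume $X:=X(\cB_\cM)$ is Cohen--Macaulay over $\kk$. Since $\rank\cB_\cM=2$ and, by the standing hypotheses, $\cM$ is of full rank with $g$ not a coloop, $X$ is a pure $2$-dimensional regular CW complex, purity of $\cB_\cM$ being established in \cite{D}. The first step is to show that $X$ is a compact topological $2$-manifold with boundary, by checking that each point is a manifold point. Interior points of $2$-cells are clear. An interior point of an edge $e$ has a neighbourhood of the form $\RR\times C_k$, where $C_k$ is a cone on $k$ points and $k$ is the number of topes of $\cB_\cM$ carrying $e$ on their boundary; here $k\ge 1$ by purity and $k\le 2$ because any subtope of $\cB_\cM$ lies in at most two topes of $\cB_\cM$, so such a neighbourhood is a half-plane ($k=1$) or a plane ($k=2$). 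The main point is the link of a vertex $v$: the link $\operatorname{lk}_X(v)$ is a $1$-dimensional regular CW complex whose vertices are the edges of $X$ through $v$ and whose edges are the $2$-cells of $X$ through $v$, each $2$-cell joining the two edges of $X$ it meets at $v$ — two distinct edges, by regularity of $X$. Hence $\operatorname{lk}_X(v)$ has no loops, and each of its vertices has degree $1$ or $2$ (again purity together with ``at most two topes of $\cB_\cM$''), so $\abs{\operatorname{lk}_X(v)}$ is a compact $1$-manifold, possibly with boundary. At the same time, Cohen--Macaulayness of $X$ forces, via Lemma~\ref{CM criterion} and Proposition~\ref{prop:link-iso}, that $\widetilde H^0(\operatorname{lk}_X(v);\kk)\cong H^1(C^\bullet(X^{\ge v}))=0$, i.e.\ $\operatorname{lk}_X(v)$ is connected. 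A connected compact $1$-manifold is $S^1$ or a closed arc, so $v$ has a neighbourhood in $X$ homeomorphic to an open disk or to a half-disk. Thus $X$ is a compact $2$-manifold with boundary.

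Finally, $X$ is connected (a consequence of its Cohen--Macaulayness, or of contractibility) and contractible by \cite[Theorem 4.5.7]{BVSWZ}, so its boundary is nonempty, since a closed surface is never contractible. By the classification of compact surfaces, a compact connected surface with nonempty boundary and $H_1(\,\cdot\,;\ZZ)=0$ is homeomorphic to $D^2$; therefore $X(\cB_\cM)\cong D^2$, which finishes the converse.

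I expect the vertex-link step to be the real obstacle. In top degree $2$ the Cohen--Macaulay hypothesis is quite weak — it only yields the connectedness of each vertex link — and what rescues the argument is the combinatorial rigidity of the bounded complex (purity, together with each subtope lying in at most two topes of $\cB_\cM$), which already forces those links to be $1$-manifolds; only the two facts in combination produce the manifold-with-boundary structure. This is genuinely special to $\rank\cB_\cM=2$: in higher rank, Cohen--Macaulayness does not force the local structure to be that of a manifold (compare Remark~\ref{rem:CM-imply-CM}(3)).
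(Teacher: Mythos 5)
Your proof is correct and reaches the conclusion by a route that differs from the paper's. Both arguments ultimately isolate the same two facts: the Cohen--Macaulay hypothesis forces the (one-dimensional) vertex links to be connected, and the oriented-matroid structure forces every edge of $X(\cB_\cM)$ to lie in at most two $2$-cells, so those links are graphs of maximum degree two without loops. The paper feeds these facts into PL topology: it observes that each $\gD(\cB_\cM^{>\gl})$ is a Cohen--Macaulay complex of dimension $\le 1$, hence shellable, and a pseudomanifold by \cite[Theorem~4.1.14]{BVSWZ}, hence a PL sphere or PL ball by \cite[Proposition~4.7.22]{BVSWZ}; it then hands this local data to Dong's machinery from \cite[\S\S 3--4]{D08}, which converts it into the global statement that $X(\cB_\cM)$ is a ball. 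You instead show directly that $X(\cB_\cM)$ is a compact topological $2$-manifold with boundary, use contractibility (\cite[Theorem~4.5.7]{BVSWZ}) to see that the boundary is nonempty and that $H_1$ vanishes, and finish with the classification of compact surfaces. Your route is more elementary and self-contained, but intrinsically two-dimensional; the paper's route sits inside the PL framework Dong built precisely so that it can be pushed to higher rank. One small point of hygiene: when you speak of $\opn{lk}_X(v)$ and of edge-point neighbourhoods of the form $\RR\times C_k$ for a \emph{regular CW} complex rather than a simplicial one, you are implicitly passing to the barycentric subdivision (equivalently, using that $X$ near a cell is the cone on the order complex of the upper interval); stating this once, as the paper does by working with $\gD(\cB_\cM^{>\gl})$, would make the local analysis airtight.
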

\begin{proof}
  We have only to show ``if'' part. By the arguments in \cite[Sections 3 and 4]{D08}, it suffices to show that $\gD(\cB_\cM^{>\gl})$ is empty, or otherwise either a PL sphere or a PL ball, for any $\gl \in \cB_\cM$.

Assume $\gD(\cB_\cM^{>\gl})$ is not empty. Since $\gD(\cB_\cM^{>\gl})$ is the link of some face of $\gD(\cB_\cM)$, it is also Cohen--Macaulay and moreover its dimension is less than or equal to one. Therefore $\gD(\cB_\cM^{>\gl})$ is shellable.
Furthermore it follows from \cite[Theorem 4.1.14]{BVSWZ} that each face of codimension one is contained at most two facets. These two facts indeed imply that $\gD(\cB_\cM^{>\gl})$ is either a PL sphere or a PL ball (see \cite[Proposition 4.7.22]{BVSWZ} for example).
\end{proof}

\begin{rem}
  (1) Let $\cM$ be a uniform affine oriented matroid. It follows from Theorem~\ref{thm:char-genpos} that $\widetilde S/O_\cM$ is Cohen--Macaulay and hence the bounded complex $X(\cB_\cM)$ is also Cohen--Macaulay (over any field $\kk$) by Corollary~\ref{cor: ring CM => B_M CM}. In the case where $\cB_\cM$ is of rank 2, our Proposition~\ref{prop:rank2} thus induces Dong's theorem (\cite{D08}), saying $X(\cB_\cM)$ is homeomorphic to a ball.

  (2) Let $\cH := \cbr{H_1,\dots ,H_n}$ be an affine hyperplane arrangement in $\RR^d$ with $\bigcap H_i = \void$. 
Assume there exists $H_{i_1},\dots ,H_{i_d}$ that intersect with a point, and the bounded complex $\cB_\cM$ of the affine oriented matroid $\cM$ induced from $\cH$ is of full rank. 
Then $X(\cB_\cM)$ is homeomorphic to the barycentric subdivision $\gD$ of the polyhedral complex consisting of the polytopes given by some of $H_1,\dots ,H_n$. 
Moreover the dimension of $\gD$ is equal to $\rank \cB_\cM = d$. The (geometric) simplicial complex $\gD$ is thus identically PL embedded in $\RR^d$. Now assume $X(\cB)$, hence $\gD$, is Cohen--Macaulay. Applying the results by Miller and Reiner \cite{MR}, 
we can deduce that $X(\cB_\cM)$ is a ball whenever $d \le 3$ and a topological manifold when $d \le 4$.
\end{rem}

Next we will study the boundary $\partial X(\cB_\cM)$ of the homology manifold $X(\cB_\cM)$. 
Applying the result due to W. J. R. Mitchell \cite{Mi}, we can show that $\partial X(\cB_\cM)$ is a homology manifold without boundary over $\ZZ$ if $\widetilde S/O_\cM$ is Cohen--Macaulay. Mitchell's theorem can be applied to more general homology manifolds of first countable. For a finite regular CW complex, the theorem reads as follows.

\begin{thm}[Mitchell \cite{Mi}]
Let $X$ be a finite regular CW complex. Assume $X$ is a homology $n$-manifold over $\ZZ$ with boundary. Then its boundary $\partial X$ is a homology manifold without boundary.
\end{thm}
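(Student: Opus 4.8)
The plan is to reduce everything to local homology at the cells of $X$, using two standard facts about a finite regular CW complex $X$: it is homeomorphic to the order complex of its face poset, so a point $x$ in the relative interior of a cell $\sigma$ has a neighbourhood homeomorphic to $\RR^{\dim\sigma}\times\mathrm{cone}(\operatorname{lk}_X\sigma)$, whence
$H_i(X,X\setminus\cbr x;\ZZ)\cong\widetilde H_{i-\dim\sigma-1}(\operatorname{lk}_X\sigma;\ZZ)$ for all $i$; and the link of a link is a link, i.e.\ $\operatorname{lk}_{\operatorname{lk}_X\tau}(\bar\sigma)\cong\operatorname{lk}_X\sigma$ whenever $\tau<\sigma$ (with $\bar\sigma$ the cell of $\operatorname{lk}_X\tau$ determined by $\sigma$). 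All the homology arguments below will be run through these local pictures.

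First I would verify that $\partial X$, the set defined in \eqref{eq:bd}, is a subcomplex of $X$ with $\dim\partial X\le n-1$. By the local-homology formula the homeomorphism type of $(X,X\setminus\cbr x)$, hence membership in $\partial X$, is constant on the relative interior of each cell; and $\operatorname{lk}_X\rho=\void$ for a maximal cell $\rho$, which by the same formula forces $\dim\rho=n$, so every maximal cell has dimension $n$. Using this I would check that a face $\tau$ of a boundary cell $\sigma$ is again a boundary cell: $\operatorname{lk}_X\tau$ is connected (a disconnected link would, through $\widetilde H_0$, contradict the homology $n$-manifold condition unless $\dim\tau=n-1$, which is excluded since $\sigma>\tau$ has dimension $n$ and is therefore interior), and it has nonempty ``boundary'', so its top reduced homology vanishes and $\tau\in\partial X$. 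Hence $\partial X$ is closed, i.e.\ a subcomplex.

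Next comes the key structural step, which needs no induction: for every cell $\sigma$, $\operatorname{lk}_X\sigma$ is a compact homology $(n-\dim\sigma-1)$-manifold over $\ZZ$ with boundary, and $\partial(\operatorname{lk}_X\sigma)=\operatorname{lk}_{\partial X}\sigma$. Indeed, for $y$ in the relative interior of a cell $\bar\rho$ of $\operatorname{lk}_X\sigma$ with $\rho>\sigma$ in $X$, the two facts above give $H_i(\operatorname{lk}_X\sigma,\operatorname{lk}_X\sigma\setminus\cbr y;\ZZ)\cong\widetilde H_{i-\dim\bar\rho-1}(\operatorname{lk}_X\rho;\ZZ)\cong H_{i-\dim\bar\rho+\dim\rho}(X,X\setminus\cbr x;\ZZ)$ for $x\in\operatorname{int}\rho$, which vanishes unless $i=n-\dim\sigma-1$ and is $0$ or $\ZZ$ there; and this top group is $0$ exactly when $\rho\in\partial X$, which (since $\partial X$ is a subcomplex) recovers the boundary description.

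Finally I would conclude as follows. Fix $x\in\partial X$ in the relative interior of a cell $\sigma$, set $m:=n-\dim\sigma-1$ and $L:=\operatorname{lk}_X\sigma$. By the previous step $L$ is a compact homology $m$-manifold with boundary, and since $x\in\partial X$ and $X$ is a homology $n$-manifold the local-homology formula forces $\widetilde H_*(L;\ZZ)=0$, i.e.\ $L$ is $\ZZ$-acyclic. A $\ZZ$-acyclic complex has $H^1(L;\ZZ/2)=0$ by universal coefficients, hence $L$ is $\ZZ$-orientable, so Poincar\'e--Lefschetz duality over $\ZZ$ yields $H_k(L,\partial L;\ZZ)\cong H^{m-k}(L;\ZZ)$, equal to $\ZZ$ for $k=m$ and $0$ otherwise; feeding this into the long exact sequence of $(L,\partial L)$ and using $\widetilde H_*(L;\ZZ)=0$ shows $\partial L$ is a $\ZZ$-homology $(m-1)$-sphere. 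Since $\partial X$ is a subcomplex with $\operatorname{lk}_{\partial X}\sigma=\partial L$, the local-homology formula applied to $\partial X$ gives $H_i(\partial X,\partial X\setminus\cbr x;\ZZ)\cong\widetilde H_{i-\dim\sigma-1}(\partial L;\ZZ)$, which is $\ZZ$ for $i=n-1$ and $0$ otherwise; as this holds at every point of $\partial X$ and the degree-$(n-1)$ group is never $0$, $\partial X$ is a homology $(n-1)$-manifold over $\ZZ$ with empty boundary. The hard part will be the bookkeeping in the first two steps --- establishing the local-homology and link-of-link identities and the subcomplex property in the weak ``$0$ or $\ZZ$'' sense of homology manifold, without smuggling in hypotheses like connectedness --- together with making sure Poincar\'e--Lefschetz duality is legitimately available over $\ZZ$, which is exactly what the orientability of a $\ZZ$-acyclic homology manifold secures.
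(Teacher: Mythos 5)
The paper does not prove this statement; it is quoted from Mitchell's note \cite{Mi}, whose proof is sheaf-theoretic (Borel--Moore homology and the orientation sheaf), so there is no internal argument to compare against.

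Your reduction-to-links skeleton is the right simplicial translation of Mitchell's argument, and the final computation $H_i(\partial X, \partial X \setminus \cbr{x}; \ZZ) \cong \widetilde H_{i - \dim\sigma - 1}(\partial L; \ZZ)$ is correct once the preceding steps are in place. But two of those steps carry far more weight than the write-up admits. (i) Step 1 hinges on the claim that a connected compact homology $m$-manifold with a nonempty boundary set has $H_m(-;\ZZ) = 0$; this is equivalent to saying the interior is open, i.e., that the orientation sheaf has no global sections supported near a boundary point, and it requires an argument of the same calibre as the duality in Step 3 --- it is not a formal consequence of the local-homology and link-of-link identities. Since Step 2 explicitly invokes ``since $\partial X$ is a subcomplex,'' this gap propagates. (ii) The orientability argument in Step 3 misfires: the orientation local system of a homology manifold with boundary lives only on the interior $L \setminus \partial L$ (the stalk at a boundary point is $0$, so there is nothing on $L$ itself to trivialize), and since there is no collar theorem for homology manifolds one cannot transfer $H^1(L;\ZZ/2) = 0$ to $H^1(L \setminus \partial L;\ZZ/2) = 0$ for free. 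Mitchell sidesteps both issues by stating Lefschetz duality with the orientation sheaf $\mathcal{O}$ as coefficients, e.g.\ $H_k(L;\ZZ) \cong H^{m-k}_c(L \setminus \partial L; \mathcal{O})$; acyclicity of $L$ then kills the cohomology side without ever trivializing $\mathcal{O}$, and the vanishing $H_m(L;\ZZ) = 0$ needed for (i) falls out as the $k=m$ case. With that sheaf-theoretic input supplied, your outline does recover Mitchell's theorem in the finite regular CW setting; as written, though, the subcomplex property and the duality step are asserted rather than proved.
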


\begin{cor}\label{partial B_M is mfd}
  If $\widetilde S/O_\cM$ is Cohen--Macaulay, then the boundary $\partial X$ of $X := X(\cB_\cM)$ is a homology manifold without boundary over $\ZZ$.
\end{cor}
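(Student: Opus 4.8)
The plan is to obtain this as a short formal deduction from Corollary~\ref{cor:X-hmfd} together with Mitchell's theorem quoted just above. First I would note that $X := X(\cB_\cM)$ is a finite regular CW complex (by Theorem~\ref{thm:TRT}, $\cB_\cM \cup \cbr\zvec$ is the face poset of such a complex), so that Mitchell's theorem is applicable to it. Next, under the standing hypothesis that $\widetilde S/O_\cM$ is Cohen--Macaulay, Corollary~\ref{cor:X-hmfd} tells us that $X$ is a homology $(\dim X)$-manifold with boundary over $\ZZ$, and moreover that the homologically defined boundary \eqref{eq:bd} coincides with the natural CW boundary $\partial X^{(*)} = \partial X(\cB_\cM)$.

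With these two inputs in place, Mitchell's theorem applies verbatim with $n = \dim X$: the boundary of a finite regular CW complex which is a homology $n$-manifold over $\ZZ$ with boundary is itself a homology manifold without boundary (of dimension $n-1$) over $\ZZ$. Applying this to $X$ gives precisely that $\partial X$ is a homology manifold without boundary over $\ZZ$, which is the assertion of the corollary.

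I do not expect any real obstacle at this last step; essentially all of the substantive work has already been done in establishing Corollary~\ref{cor:X-hmfd} (which itself rests on Lemma~\ref{interior} and the explicit description of $\omega_{\widetilde S/O_\cM}$ in Theorem~\ref{canonical}). The one point that one might worry about --- namely whether ``the boundary $\partial X$'' refers to the combinatorial boundary $\partial X^{(*)}$ or to the homologically defined one of \eqref{eq:bd} --- has already been settled affirmatively inside Corollary~\ref{cor:X-hmfd}, so the statement is unambiguous and the proof reduces to citing the two results above.
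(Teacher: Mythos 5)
Your proposal is exactly the argument the paper intends: the corollary is stated immediately after Mitchell's theorem with no separate proof precisely because it is the direct combination of Mitchell's theorem with Corollary~\ref{cor:X-hmfd} (which establishes both that $X$ is a homology manifold with boundary over $\ZZ$ and that the homological boundary agrees with the combinatorial one). The reasoning is correct and matches the paper's approach.
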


In the situation of Corollary~\ref{partial B_M is mfd}, we strongly believe that $\partial X$ is a {\it homology sphere} over $\ZZ$, that is, 
\begin{equation}\label{homology sphere}
H_i(\partial X, \partial X \setminus \{x\};\ZZ)= H_i(\partial X;\ZZ)=\begin{cases}
\ZZ & \text{if $i= \dim \partial X$,}\\
0 & \text{otherwise}
\end{cases}
\end{equation}
for all $x \in \partial X$. 
However, we have no proof yet. 

On the other hand, we have the following.

\begin{thm}\label{Delta_M ball}
Assume that $\wS/O_\cM$ is Cohen--Macaulay. 
For  the simplicial complex  $\Delta_\cM \subset 2^{\br {2n}}$  whose Stanley--Reisner ring  is $\wS/O_\cM$, $\Delta_\cM$ is a homology manifold over $\ZZ$ with boundary. Moreover, the boundary is a homology sphere in the sense of \eqref{homology sphere}. 
\end{thm}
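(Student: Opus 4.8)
The plan is to read off the two assertions from the explicit form $\omega_{\wS/O_\cM}\cong J_\cM$ of Theorem~\ref{canonical} together with Hochster's formula, and then to extract the sphericity of the boundary by a Poincar\'e--Lefschetz duality argument over the field $\ZZ/2$. Write $m:=\dim(\wS/O_\cM)-1=\dim\Delta_\cM$, and for $\sigma\in\Delta_\cM$ let $\operatorname{lk}\sigma$ denote its link in $\Delta_\cM$. Since the Cohen--Macaulayness of $\wS/O_\cM$ does not depend on $\kk$ (Theorem~\ref{thm:CM-implies-matroid} and the remark after it), $\Delta_\cM$ is Cohen--Macaulay over $\ZZ$: by Reisner's criterion and the universal coefficient theorem, $\widetilde H_i(\operatorname{lk}\sigma;\ZZ)=0$ for all $\sigma\in\Delta_\cM$ and all $i<\dim\operatorname{lk}\sigma$, and $\widetilde H_{\dim\operatorname{lk}\sigma}(\operatorname{lk}\sigma;\ZZ)$ is free (top simplicial homology is always free); call its rank $b_\sigma$, so that $b_\sigma=\dim_\kk\widetilde H_{\dim\operatorname{lk}\sigma}(\operatorname{lk}\sigma;\kk)$ for every field $\kk$.

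The next step is to bound $b_\sigma$ using the canonical module. For the squarefree vector $\ba_\sigma\in\{0,1\}^{2n}$ with support $\sigma$, graded local duality and Hochster's formula (exactly as in the proof of Lemma~\ref{generator}, cf.\ \cite[Theorem~5.3.8]{BH}) give
\[
b_\sigma=\dim_\kk\widetilde H^{\dim\operatorname{lk}\sigma}(\operatorname{lk}\sigma;\kk)=\dim_\kk\bigl[H^{d'}_{\mathfrak m}(\wS/O_\cM)\bigr]_{-\ba_\sigma}=\dim_\kk[\omega_{\wS/O_\cM}]_{\ba_\sigma},
\]
where $d':=\dim\wS/O_\cM$. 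By Theorem~\ref{canonical} this equals $\dim_\kk[J_\cM]_{\ba_\sigma}$, and since $[\wS/O_\cM]_{\ba_\sigma}$ is at most one-dimensional — spanned by the residue of the unique squarefree monomial of degree $\ba_\sigma$, which survives in $\wS/O_\cM$ precisely because $\sigma$ is a face — we conclude $b_\sigma\le 1$. Moreover $b_{\void}=\dim_\kk[J_\cM]_{\zvec}=0$, because $J_\cM$ contains no unit: for each $i\in\br n$ at least one of $x_i,y_i$ divides every generator $\sn_\gl$, so $\deg\sn_\gl\ne\zvec$. Hence $\widetilde H_*(\Delta_\cM;\ZZ)=0$, and for every $\sigma$ we have $\widetilde H_j(\operatorname{lk}\sigma;\ZZ)=0$ for $j\ne\dim\operatorname{lk}\sigma$ while $\widetilde H_{\dim\operatorname{lk}\sigma}(\operatorname{lk}\sigma;\ZZ)\in\{0,\ZZ\}$. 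Translating this through the classical isomorphism $H_i(\abs{\Delta_\cM},\abs{\Delta_\cM}\setminus\{p\};\ZZ)\cong\widetilde H_{i-|\sigma|}(\operatorname{lk}\sigma;\ZZ)$ for $p$ in the relative interior of $\sigma$ (the simplicial version of Proposition~\ref{prop:link-iso}, together with Lemma~\ref{lem:redhom-lem}) shows that $\Delta_\cM$ is a homology $m$-manifold with boundary over $\ZZ$, whose boundary in the sense of \eqref{eq:bd} is the subcomplex $\Gamma:=\{\sigma\in\Delta_\cM\mid b_\sigma=0\}$ (this is a subcomplex because the boundary point-set of a homology manifold is closed).

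Finally, I would show $\Gamma$ is a homology sphere. By Mitchell's theorem \cite{Mi}, $\Gamma$ is a homology $(m-1)$-manifold \emph{without} boundary over $\ZZ$; in particular $H_i(\Gamma,\Gamma\setminus\{x\};\ZZ)$ is $\ZZ$ for $i=m-1$ and $0$ otherwise, and $\Gamma$ is Cohen--Macaulay over $\ZZ$, so $\widetilde H_i(\Gamma;\ZZ)=0$ for $i<m-1$ and $\widetilde H_{m-1}(\Gamma;\ZZ)$ is free. To identify the latter I pass to $\ZZ/2$-coefficients, where Poincar\'e--Lefschetz duality holds with no orientability hypothesis: $H_i(\Delta_\cM,\Gamma;\ZZ/2)\cong H^{m-i}(\Delta_\cM;\ZZ/2)$, which vanishes for $i<m$ and is $\ZZ/2$ for $i=m$ because $\Delta_\cM$ is $\ZZ/2$-acyclic. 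The long exact sequence of $(\Delta_\cM,\Gamma)$, using $\ZZ/2$-acyclicity of $\Delta_\cM$ once more, then gives $\widetilde H_j(\Gamma;\ZZ/2)=0$ for $j\ne m-1$ and $\widetilde H_{m-1}(\Gamma;\ZZ/2)\cong\ZZ/2$, so $\Gamma$ is a $\ZZ/2$-homology $(m-1)$-sphere. Since $\widetilde H_{m-2}(\Gamma;\ZZ)=0$ and $\widetilde H_{m-1}(\Gamma;\ZZ)$ is free, the universal coefficient theorem forces $\widetilde H_{m-1}(\Gamma;\ZZ)\otimes\ZZ/2\cong\ZZ/2$, hence $\widetilde H_{m-1}(\Gamma;\ZZ)\cong\ZZ$; together with the local statement above this is exactly \eqref{homology sphere}. (The degenerate cases $m\le 1$ are disposed of directly, since then $\Delta_\cM$ is a point or a closed interval.) The step I expect to be the main obstacle is the invocation of Poincar\'e--Lefschetz duality for homology manifolds with boundary, together with making precise that $\Gamma$ is the topological boundary realized as a subcomplex; once Theorem~\ref{canonical} is in hand, the commutative-algebra input is routine bookkeeping with Hochster's formula.
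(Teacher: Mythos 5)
Your first half (showing $\Delta_\cM$ is a homology $m$-manifold with boundary over $\ZZ$) is correct and follows the paper's own route: Hochster's formula plus Theorem~\ref{canonical} give $\dim_\kk[\omega_{\wS/O_\cM}]_F = \dim_\kk[J_\cM]_F \le 1$ for all faces $F$, and Lemma~\ref{lem:redhom-lem} lifts this to $\ZZ$. Where you diverge is the sphere claim for $\Gamma := \partial\Delta_\cM$, and there your argument has a genuine gap.

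The problematic step is the assertion ``$\Gamma$ is Cohen--Macaulay over $\ZZ$, so $\widetilde H_i(\Gamma;\ZZ)=0$ for $i<m-1$.'' Mitchell's theorem gives only the \emph{local} condition $H_i(\Gamma,\Gamma\setminus\{x\};\ZZ)$: being a homology manifold without boundary says nothing about the vanishing of the global reduced homology of $\Gamma$ in low degrees (a genus-$2$ surface is a homology $2$-manifold without boundary but is not Cohen--Macaulay). Your $\ZZ/2$-Poincar\'e--Lefschetz duality does establish $\widetilde H_j(\Gamma;\ZZ/2)=0$ for $j\ne m-1$, but this cannot detect odd torsion in $\widetilde H_j(\Gamma;\ZZ)$: one could a priori have $\widetilde H_j(\Gamma;\ZZ)\cong \ZZ/3$ for some $j<m-1$ and still see $\widetilde H_j(\Gamma;\ZZ/2)=0$. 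To run duality over other coefficient fields and then invoke Lemma~\ref{lem:redhom-lem}, you would need $R$-orientability for $R$ in which $2$ is invertible, and you have not established any orientability. So the chain ``$\widetilde H_{m-2}(\Gamma;\ZZ)=0$ and $\widetilde H_{m-1}(\Gamma;\ZZ)$ free $\Rightarrow$ \dots'' rests on an unproved premise.

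The paper avoids this by staying in commutative algebra: by Theorem~\ref{canonical}, $\kk[\partial\Delta_\cM]\cong\kk[\Delta_\cM]/J_\cM$, and since $J_\cM$ is the canonical ideal of the Cohen--Macaulay ring $\kk[\Delta_\cM]$, the quotient is Gorenstein with $\kk[\partial\Delta_\cM]\cong\omega_{\kk[\partial\Delta_\cM]}$ (BH Proposition 3.3.18). Stanley's characterization then makes $\partial\Delta_\cM$ a $\kk$-homology sphere \emph{for every field} $\kk$, and Lemma~\ref{lem:redhom-lem} lifts to $\ZZ$ uniformly. This sidesteps orientability entirely. A further warning sign for your approach: it is purely topological, so if it worked it would apply verbatim to the regular CW complex $X(\cB_\cM)$ (which is also a contractible $\ZZ$-homology manifold with boundary by Corollary~\ref{cor:X-hmfd}), proving that $\partial X(\cB_\cM)$ is a homology sphere — but the paper explicitly states after Corollary~\ref{partial B_M is mfd} that this remains open. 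That is strong circumstantial evidence that the topological route, as written, is missing an essential ingredient.
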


\begin{proof}
Set $d:= \dim \Delta_\cM$. 
For the first assertion, it suffices to show that $\Delta_\cM$ is a homology manifold over $\kk$ by Lemma~\ref{lem:redhom-lem}. 
Moreover, since $\Delta_\cM$ is Cohen--Macaulay, it suffices to show that $\dim_\kk H^{d- \# F} (\opn{lk}_{\Delta_\cM} F; \kk) \le 1$ for all $\emptyset \ne F \in \Delta_\cM$, where $\opn{lk}_{\Delta_\cM} F$ denotes the link of $F$.
Identifying $F \in \Delta_\cM$ with the corresponding squarefree vector in $\ZZ^{2n}$, we have 
$$\dim_\kk H^{d - \# F} (\opn{lk}_{\Delta_\cM} F; \kk)= \dim_\kk [\omega_{\wS/O_\cM}]_F = \dim_\kk [J_\cM]_F \le 1.$$
Here the first (resp. second) equality follows from Hochster's formula (resp. Theorem~\ref{canonical}). 

For the second assertion, note that $F \in \Delta_\cM$ belongs to the boundary $\partial \Delta_\cM$, if and only if $H^{d- \# F} (\opn{lk}_{\Delta_\cM} F; \kk) = 0$, if and only if $\sfm_F \not \in J_\cM$, where $\sfm_F \in \kk[\gD_\cM]$ is the squarefree monomial corresponding to $F$, since $H^{d - \# F} (\opn{lk}_{\Delta_\cM} F; \kk) \cong  [J_\cM]_F$.  
Hence the Stanley--Reisner ring $\kk[\partial \Delta_\cM]$ of $\partial \Delta_\cM$ is isomorphic to $\kk[\Delta_\cM]/J_\cM$. 
Since $\kk[\Delta_\cM]$ is Cohen--Macaulay and $J_\cM$ is its canonical ideal in the graded context, $\kk[\partial \Delta_\cM]$ is a Gorenstein ring with $\kk[\partial\Delta_\cM] \cong \omega_{\kk[\partial\Delta_\cM]}$ as graded modules by the graded version of \cite[Proposition~3.3.18]{BH}. This means that $\partial\Delta_\cM$ is a homology sphere over $\kk$, and hence over $\ZZ$ by Lemma~\ref{lem:redhom-lem}.
\end{proof}

However, in the situation of Theorem~\ref{Delta_M ball}, we have no idea whether  the geometric realization $|\Delta_\cM|$ is homeomorphic to a ball, or even whether it is contractible. 

\section*{Acknowledgments}
The authors are very grateful to Satoshi Murai for stimulating discussion and bringing the paper \cite{Mi} to our attention. We would also like to express our thanks to Michihisa Wakui and anonymous referees for valuable comments on the presentation of the paper.


%
\end{document}